\definecolor{red}{RGB}{200,0,0}
\definecolor{green}{RGB}{0,120,0}
\definecolor{blue}{RGB}{0,0,180}
\definecolor{yellow}{RGB}{255,191,0}
\newtheorem{theorem}{Theorem}
\newtheorem{corollary}[theorem]{Corollary}
\newtheorem{lemma}[theorem]{Lemma}
\newtheorem{definition}[theorem]{Definition}
\newtheorem{remark}[theorem]{Remark}
\newcommand{\myfrac}[2]{\mbox{\footnotesize $\dfrac{#1}{#2}$}}
\newcommand{\as}[2]{\mbox{\footnotesize $\dfrac{#1+\mbox{\scriptsize $\displaystyle \tfrac 1 2$}}{\mbox{\footnotesize $#2$}}$}}
\newcommand{\al}[1]{\mbox{\footnotesize $\dfrac{\Z+\mbox{\scriptsize $\displaystyle \tfrac 1 2$}}{\mbox{\footnotesize $#1$}}$}}
\newcommand{\almin}[1]{\mbox{\footnotesize $\dfrac{\Z-\mbox{\scriptsize $\displaystyle \tfrac 1 2$}}{\mbox{\footnotesize $#1$}}$}}
\newcommand{\Rho}{\mathrm{P}}
\def\R{{\mathbb R}}
\def\Z{{\mathbb Z}}
\def\N{{\mathbb N}}
\def\C{{\mathbb C}}
\def\x{{\mathbf x}}
\def\E{{\mathcal E}}
\def\CHI{\hbox{\raise .5ex \hbox{$\chi$}}}
\def\pause{ {} }
\tikzset{
    invisible/.style={opacity=0,text opacity=0},
    visible on/.style={alt={#1{}{invisible}}},
    alt/.code args={<#1>#2#3}{%
      \alt<#1>{\pgfkeysalso{#2}}{\pgfkeysalso{#3}} 
    },
  }
\begin{document}

\title{Exponential bases for partitions of intervals}
\author{G\"otz Pfander \\ Katholische Universit\"at Eichst\"att 
   \and Shauna Revay \\ George Mason University \and David Walnut\\ George Mason University }

\maketitle
\begin{center}

\begin{minipage}[c]{16cm}
Abstract. \ \ For a partition of $[0,1]$ into intervals
$I_1,\ldots,I_n$   
we prove the existence of a partition of $\Z$ into
$\Lambda_1,\ldots, \Lambda_n$ 
such that the complex exponential functions with frequencies in $ \Lambda_k$ form a Riesz basis for $L^2(I_k)$,
and furthermore, that for any $J\subseteq\{1,\,2,\,\dots,\,n\}$, the exponential functions with frequencies in
$ \bigcup_{j\in J}\Lambda_j$ form a Riesz basis for $L^2(I)$
for any interval $I$ with length $|I|=\sum_{j\in J}|I_j|$.
The construction extends to infinite partitions of $[0,1]$, but with size
limitations on the subsets $J\subseteq \Z$; it combines the ergodic properties of subsequences of $\Z$
known as Beatty-Fraenkel sequences with a theorem of Avdonin
on exponential Riesz bases.
\end{minipage}
\end{center}

\section{Introduction and main results}

The foundational stone of Fourier analysis  is  the statement that $\mathcal E(\Z)=\{e^{2\pi i n (\cdot)}\}_{n\in\Z}$ forms an orthonormal bases for the space of square integrable functions on the unit interval, $L^2[0,1]$.  Simple scaling and translation arguments imply the curious fact that $\mathcal E(\Z)$ splits into $\mathcal E(2\Z)$ and $\mathcal E(2\Z+1)$, which form orthogonal bases for $L^2[0,1/2]$ and $L^2[1/2,1]$, respectively.
Using the necessary relaxation of orthogonal bases  to Riesz bases, see Definition~\ref{def:RB} below, we show that any separation of the unit interval into finitely many subintervals allows for a corresponding partition of the integers.

\begin{theorem}\label{thm:main1}
For $a_0=0<a_1<a_2<\ldots<a_n=1$ exist  pairwise disjoint sets $\Lambda_1,\Lambda_2,\ldots,\Lambda_n\subseteq \Z$ 
with $\Lambda_1\cup \Lambda_2\cup \ldots\cup \Lambda_n=\Z$ so that $\E( \Lambda_{k})$ is a Riesz basis for $L^2[a_{k-1},a_{k}]$.
\end{theorem}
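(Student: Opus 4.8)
The plan is to construct $\Lambda_1,\dots,\Lambda_n$ by ``peeling off'' one Beatty--Fraenkel sequence at a time, exploiting the classical fact that the complement in $\Z$ of a (two-sided, inhomogeneous) Beatty sequence is again a Beatty sequence of complementary density, and then to verify the Riesz basis property for each piece via Avdonin's theorem. First note that the Riesz basis property of $\E(\Lambda)$ on an interval depends only on the length of that interval: multiplying each $e^{2\pi i\lambda(\cdot)}$ by the unimodular constant $e^{-2\pi i\lambda a}$ converts a Riesz basis for $L^2[a,a+\alpha]$ into one for $L^2[0,\alpha]$ with the same bounds. Writing $\alpha_k=a_k-a_{k-1}$, so that $\sum_{k=1}^n\alpha_k=1$, it therefore suffices to partition $\Z$ into $\Lambda_1,\dots,\Lambda_n$ with $\E(\Lambda_k)$ a Riesz basis for $L^2[0,\alpha_k]$.

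Set $R_0=\Z$ and $\rho_j=\alpha_{j+1}+\dots+\alpha_n$, so $\rho_0=1$ and $\rho_n=0$. Suppose inductively that $R_{k-1}\subseteq\Z$ has been produced as a generalized Beatty sequence of density $\rho_{k-1}$, that is, $R_{k-1}=\{r_m\}_{m\in\Z}$ (increasing) with $r_m=m/\rho_{k-1}+O(1)$, the bounded oscillation being governed by an irrational rotation when $\rho_{k-1}$ is irrational and periodic when it is rational. Since $0<\alpha_k<\rho_{k-1}$ for $k<n$, choose a Beatty sequence $P_k\subseteq\Z$ of density $\alpha_k/\rho_{k-1}\in(0,1)$ (e.g.\ $P_k=\{\lfloor m\rho_{k-1}/\alpha_k+c_k\rfloor:m\in\Z\}$ with a shift $c_k$ to be fixed later) and let $\Lambda_k=\{r_p:p\in P_k\}$. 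The complement $\Z\setminus P_k$ is again a Beatty sequence, of density $\rho_k/\rho_{k-1}$, so $R_k:=R_{k-1}\setminus\Lambda_k=\{r_p:p\in\Z\setminus P_k\}$ is a generalized Beatty sequence in $\Z$ of density $\rho_k$, and we continue. After $n-1$ steps we obtain pairwise disjoint $\Lambda_1,\dots,\Lambda_{n-1}$ together with $\Lambda_n:=R_{n-1}$, a generalized Beatty sequence of density $\alpha_n$; by construction these partition $\Z$, and $\Lambda_k$ is a generalized Beatty sequence of density $\alpha_k$ for every $k$.

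It remains to show that each such $\Lambda_k$, enumerated increasingly as $\{\lambda_m\}_{m\in\Z}$, meets the hypotheses of Avdonin's theorem for the interval $[0,\alpha_k]$. Since $\Lambda_k\subseteq\Z$ has positive density with bounded gaps it is separated, and from $\lambda_m=m/\alpha_k+O(1)$ we get $\alpha_k\lambda_m=m+\delta_m$ with $\sup_m|\delta_m|<\infty$; the only substantive requirement is the ``mean'' condition, namely that the averages of $\delta_m$ over blocks of some fixed length $N$ stay below $\tfrac14$ in absolute value, uniformly in the block position. The free shift parameters $c_k$ (and the analogous shifts arising in the complements) are selected so that the oscillation $\delta$ is centered at $0$, whereupon unique ergodicity of the underlying irrational rotation forces the block averages of $\delta_m$ to converge to $0$ \emph{uniformly} in the starting point, so that for $N$ large the mean is below $\tfrac14$ (indeed below any prescribed $\varepsilon$); when the relevant density is rational the sequence is eventually periodic with average displacement $0$ and the bound is immediate. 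Avdonin's theorem then gives that $\E(\Lambda_k)$ is a Riesz basis for $L^2[0,\alpha_k]$, as required.

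The main obstacle is the inductive bookkeeping: one must show that the class of subsets of $\Z$ in play --- generalized two-sided Beatty sequences with oscillations expressed through an explicit rotation --- is honestly closed under both \emph{extracting} a Beatty subsequence and \emph{deleting} one, with densities kept exactly on target and shift parameters kept available for centering; and one must make the ergodic-averaging input uniform in the block position and valid simultaneously for all $n$ sequences, which requires controlling (or, for rational lengths, bypassing) resonances among the various slopes produced by the iterated Beatty operations. Once this is in place, the passage from the combinatorial partition to the analytic conclusion is exactly one application of Avdonin's theorem per block.
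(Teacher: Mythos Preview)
Your overall architecture coincides with the paper's: one peels off one interval at a time, uses Beatty--Fraenkel to partition at each step, and invokes Avdonin's theorem together with Weyl--Khinchin equidistribution to verify the Riesz basis property. You have also correctly located the crux of the argument in the ``inductive bookkeeping'' paragraph.

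The gap is precisely there, and your proposed resolution does not close it. Your inductive hypothesis is that $R_{k-1}$ is a generalized Beatty sequence whose oscillation is ``governed by an irrational rotation,'' and you then appeal to unique ergodicity of that single rotation to force the block averages of $\delta_m$ to zero. But this hypothesis is not preserved by your construction. After one composition, say $\Lambda_2=\{r_{p_j}\}$ with $r_m=\lfloor(m+\tfrac12)/\rho_1\rfloor$ and $p_j=\lfloor j\rho_1/\alpha_2+c_2\rfloor$, the displacement $\lambda_j-(j+\tfrac12)/\alpha_2$ involves \emph{two} fractional parts, one coming from each floor, so the relevant dynamics live on a two-torus (and on a $k$-torus after $k$ steps), not on a circle. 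Whether the orbit is uniquely ergodic, and whether the specific function you are averaging has mean zero, depends on arithmetic relations among the slopes $\rho_1/\alpha_2,\,1/\rho_1,\ldots$ that you do not control; the single free shift $c_k$ per step is not obviously sufficient to center all of these simultaneously. The paper in fact notes (see the caption to Figure~2) that successive application of the rounding map can fail to produce Avdonin sequences, so the naive composition you propose genuinely needs repair.

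The paper's fix is different in kind from what you sketch. Rather than trying to show that iterated Beatty sequences remain in a class where a one-dimensional ergodic argument applies, the paper abandons the Beatty structure after the first step and proves a \emph{rearrangement} lemma (Theorem~\ref{thm:rearrangement}): given any $\epsilon$-Avdonin map $\sigma\colon\frac{\Z+1/2}{a+b}\to\Z+\tfrac12$ (the composite map built so far) and the rounding maps $\widehat\varphi,\widehat\psi$ at the next level, one can locally permute the targets of $\widehat\varphi,\widehat\psi$ within blocks of a fixed length $K_0$ to produce new maps $\varphi,\psi$ for which $\sigma\circ\varphi$ and $\sigma\circ\psi$ are $(\epsilon+3\delta)$-Avdonin. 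The argument is combinatorial (an intermediate value trick on the finite set of block-local injections), uses no ergodic theory beyond the base case, and keeps the accumulated Avdonin constant under control geometrically. This is the missing idea in your plan: you need either such a rearrangement mechanism, or a genuine multi-dimensional equidistribution statement that survives the iterated floors, and the latter is not what you have written.
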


Theorem~\ref{thm:main1}  is a special case of the main result of this paper:

\begin{theorem}\label{thm:main2}
  For $b_1,\ldots,b_n>0$ with $\sum_{j=1}^n b_j=1$ exist pairwise disjoint sets $\Lambda_1,\ldots,\Lambda_n\subseteq \Z$ with   $\bigcup_{j=1}^n \Lambda_j=\Z$ and the property that $\E\big(\bigcup_{j\in J}\Lambda_i\big)$ is a Riesz basis for $L^2(I)$ for $I$ any interval of length $\sum_{j \in J} b_j$  for any $J\subseteq \{1,\ldots,n\}$.
\end{theorem}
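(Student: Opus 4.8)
The plan is to build the sets $\Lambda_j$ as Beatty–Fraenkel sequences (generalized arithmetic progressions with irrational step), exploiting two facts: (i) a Beatty sequence $\Lambda = \{\lfloor n/\beta + \gamma\rfloor : n\in\Z\}$ with density $\beta$ is "uniformly distributed" in the sense that its counting function satisfies $\#(\Lambda\cap[-N,N]) = 2N\beta + O(1)$ with the error controlled independently of $\gamma$, and (ii) Avdonin's theorem: if a real sequence $\{\lambda_k\}$ is a small (in an averaged $\ell^\infty$ sense, over blocks) perturbation of $\frac1{\beta}\Z$, then $\E(\{\lambda_k\})$ is a Riesz basis for $L^2$ of an interval of length $\beta$. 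So the strategy is to choose the densities to be $b_1,\dots,b_n$ and to choose the Beatty sequences so that (a) they partition $\Z$ exactly, and (b) every union $\bigcup_{j\in J}\Lambda_j$ is, after relabeling in increasing order, an Avdonin-admissible perturbation of $\frac1{\sum_{j\in J}b_j}\Z$.

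\medskip
\textbf{Step 1 (Exact partition of $\Z$ by Beatty sequences).} First I would recall the Fraenkel partition theorem: if $\beta_1,\dots,\beta_n>0$ sum to $1$, one can choose phases $\gamma_1,\dots,\gamma_n$ and (for the irrational case) a careful choice of the $\beta_j$ — or pass to the classical two-term Beatty construction iterated — so that the sequences $\Lambda_j=\{\lfloor \beta_j^{-1}(n - \gamma_j)\rfloor\}_{n\in\Z}$ are pairwise disjoint with union $\Z$. The cleanest route is the inductive ``splitting'' construction mirroring the $\E(2\Z),\E(2\Z+1)$ example in the introduction: split $\Z$ into a Beatty sequence of density $b_1$ and its complement (density $1-b_1$), then split the complement — which is itself a Beatty sequence up to an affine reparametrization — into pieces of densities $b_2/(1-b_1)$ and the rest, and so on. At each stage one invokes a one-step lemma: a Beatty sequence of any density splits into a sub-Beatty-sequence of a prescribed smaller relative density and its (Beatty) complement.

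\medskip
\textbf{Step 2 (Each union is a perturbed lattice).} For $J\subseteq\{1,\dots,n\}$ put $\beta_J=\sum_{j\in J}b_j$ and let $\Gamma_J=\bigcup_{j\in J}\Lambda_j$, listed as $\{\gamma^J_k\}_{k\in\Z}$ in increasing order. Because each $\Lambda_j$ has a well-controlled counting function and the $\Lambda_j$ are disjoint, $\#(\Gamma_J\cap[-N,N]) = 2N\beta_J + O(1)$ uniformly. A standard ``counting-function $\Rightarrow$ bounded separated perturbation'' argument then gives $|\gamma^J_k - k/\beta_J|\le C$ for all $k$, with $C$ independent of $k$ (this is exactly where the uniformity of the $O(1)$ terms, and the uniform separation of distinct Beatty sequences, matter). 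Moreover the averaged Avdonin quantity $\sup_m \frac1{?}\big|\sum_{k\le m}(\gamma^J_k - k/\beta_J)\big|$-type bound needs to be checked; here I would either verify that the Beatty construction gives a genuinely small (not merely bounded) perturbation in the relevant averaged norm, or — more robustly — rescale $\Z$ at the outset by a large integer $M$ and work with $M\Z$-splittings, so that every perturbation is scaled down by $1/M$ below Avdonin's threshold, at the cost of replacing ``$[0,1]$'' by ``$[0,M]$'' and then rescaling back. Either way, Step 2 concludes that $\{\gamma^J_k\}$ satisfies the hypotheses of Avdonin's theorem for an interval of length $\beta_J$.

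\medskip
\textbf{Step 3 (Invoke Avdonin, conclude).} By Avdonin's theorem, $\E(\Gamma_J)=\E\big(\bigcup_{j\in J}\Lambda_j\big)$ is a Riesz basis for $L^2(I)$ for every interval $I$ with $|I|=\beta_J=\sum_{j\in J}b_j$ (translation-invariance of the Riesz-basis property under $e^{2\pi i \lambda(\cdot)}\mapsto e^{2\pi i\lambda(\cdot-c)}$ handles the placement of $I$). Taking $J=\{k\}$ gives the per-piece statement; the full family of $J$ gives Theorem \ref{thm:main2}, and Theorem \ref{thm:main1} follows by placing the intervals consecutively.

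\medskip
\textbf{Main obstacle.} The crux is Step 2: making the Beatty partition simultaneously \emph{exact} (partitioning $\Z$ on the nose) \emph{and} quantitatively good enough that \emph{every} one of the $2^n$ unions is Avdonin-admissible. Exactness forces the densities and phases to be rigidly linked, so one cannot separately tune each $\Lambda_j$; the perturbation bound for $\Gamma_J$ must be derived from the global structure. I expect the honest fix is the $M$-fold dilation trick of Step 2, which decouples ``exactness'' from ``smallness'' by buying extra room, together with a careful bookkeeping lemma showing the dilated Beatty partition still has uniformly bounded discrepancy across all $J$.
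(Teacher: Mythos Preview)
Your overall strategy---iteratively splitting $\Z$ via Beatty--Fraenkel rounding, then invoking Avdonin for each union---is exactly the paper's architecture, and you have correctly located the crux in Step~2. But neither of your proposed fixes closes the gap. The counting argument yields only a uniform bound $|\gamma_k^J-k/\beta_J|\le C$, and bounded pointwise displacement does \emph{not} imply Avdonin's averaged condition: over a block of length $R$ there are about $\beta_J R$ terms, each of size up to $C$, so $\tfrac{1}{R}\big|\sum\big|$ is controlled only by $\beta_J C$, which has no reason to fall below the fixed threshold $\tfrac{1}{4\beta_J}$. The $M$-dilation trick does not help either, because Avdonin's condition is scale-invariant: dilating the interval by $M$ shrinks the threshold to $\tfrac{1}{4M\beta_J}$ in tandem with the perturbations, so no room is gained (and in any case the theorem demands a partition of $\Z$, not of $M\Z$ or $\tfrac{1}{M}\Z$).

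The substantive obstruction you are missing is that the iterated splitting realizes each $\Lambda_j$ as the image of $\frac{\Z+1/2}{b_j}$ under a \emph{composition} of rounding maps, and a composition of Avdonin maps is not in general an Avdonin map: the Avdonin hypothesis on an outer map $\sigma$ controls $\sum_{y}(\sigma(y)-y)$ over \emph{intervals} in its domain lattice, but after precomposing with an inner rounding $\varphi$ the relevant sum runs over the sparse Beatty-type set $\varphi\big(\frac{\Z+1/2}{a}\cap I\big)$, to which the hypothesis on $\sigma$ does not apply. The paper resolves this not by dilation but by a \emph{rearrangement} lemma (Theorem~\ref{thm:rearrangement}): at each inductive step one replaces the naive rounding maps $\widehat\varphi,\widehat\psi$ by adjusted maps $\varphi,\psi$---still a partition of the intermediate lattice, still bounded displacement, but with the local assignments permuted---chosen so that $\sigma\circ\varphi$ and $\sigma\circ\psi$ are genuine $(\epsilon+3\delta)$-Avdonin maps. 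A companion combining lemma (Theorem~\ref{lem:implicitmap}) then shows that two $\epsilon$-Avdonin maps with disjoint ranges in $\Z+\tfrac12$ merge into a single $4\epsilon$-Avdonin map on the sum lattice; iterating $|J|-1$ times and seeding with $\epsilon<4^{-n}$ brings every union $\Gamma_J$ below Avdonin's $\tfrac14$ threshold. The Weyl--Khinchin equidistribution input is used only to show that each \emph{single} rounding map is $\overrightarrow{0}$-Avdonin, which starts the induction; it does not by itself handle the compositions.
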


Note that in general, it is not true that Riesz bases of exponentials
can be combined as above, that is, if $\E(\Lambda_1)$ and $\E(\Lambda_2)$ form Riesz bases for $L^2(I_1)$ and $L^2(I_2)$
respectively, it need not follow that $\E(\Lambda_1\,\cup\,\Lambda_2)$ forms a Riesz basis for $L^2(I_1\,\cup\,I_2)$, even if $\Lambda_1$ and $\Lambda_2$ are disjoint\footnote{When we say that a system of exponentials
$\E(\Lambda) = \{e^{2\pi i\lambda t}\colon \lambda\in\Lambda\}$ forms a Riesz basis for $L^2(I)$ we mean that the system
$\{e^{2\pi i\lambda t}\CHI_{I}(t)\colon\lambda\in\Lambda\}$ ($\CHI_I$ is the indicator function of the set $I$) forms such a basis. Consequently, the statement that $\E(\Lambda_1 \cup \Lambda_2)$ is a Riesz basis for $L^2(I_1\cup I_2)$ cannot be rephrased in terms of unions of Riesz bases for sums of subspaces of the Hilbert space $L^2(\R)$.} 
(see Remark~\ref{rem:counteregkadec}).
 
Theorem~\ref{thm:main2} generalizes to a countable set of intervals as follows.

\begin{theorem}\label{thm:maincountable}
Let $b_1,b_2,\ldots>0$ satisfy $\sum_{j=1}^\infty b_j=1$. For fixed $K\in \N$ there exist pairwise disjoint sets 
$\Lambda_1,\Lambda_2,\ldots \subseteq \Z$ with the property that for any $J\subseteq \N$ with $|J|\leq K$
we have $\E\big(\bigcup_{j\in J}\Lambda_j\big)$ is a Riesz basis for $L^2(I)$ for $I$ any interval of length $\sum_{j\in J} b_j$.
\end{theorem}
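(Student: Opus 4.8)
The plan is to reduce Theorem~\ref{thm:maincountable} to the finite case, Theorem~\ref{thm:main2}, by means of a diagonalization/exhaustion argument combined with a careful bookkeeping of the frequency sets. The essential observation is that the full strength of Theorem~\ref{thm:main2} is never needed for infinitely many intervals at once; since we only ask about subsets $J\subseteq\N$ with $|J|\leq K$, each individual Riesz basis statement involves at most $K$ of the intervals, and intervals with very small total length $b_j$ contribute frequency sets $\Lambda_j$ of small density. So I would first prove a robust version of the finite result in which the partition can be \emph{refined}: given a finite partition and an associated family $\Lambda_1,\dots,\Lambda_n$ as in Theorem~\ref{thm:main2}, and given a further splitting of one of the $b_j$ into $b_j'+b_j''$, one can split $\Lambda_j$ accordingly into $\Lambda_j'\cup\Lambda_j''$ so that the refined family still has the combination property for all $J$. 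This refinement stability should follow from the proof technique of Theorem~\ref{thm:main2} rather than its statement, since the Beatty--Fraenkel/Avdonin construction is naturally hierarchical: the ergodic equidistribution of Beatty sequences respects taking subsequences, and Avdonin's theorem tolerates the bounded perturbations that arise.

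Granting such a refinement lemma, the construction of the infinite family proceeds inductively. Fix $K$. At stage $m$ I will have a finite partition of $[0,1]$ into intervals of lengths $c_1^{(m)},\dots,c_{N_m}^{(m)}$, together with disjoint frequency sets summing to $\Z$ and enjoying the combination property (for \emph{all} subsets $J$ of the current index set, via Theorem~\ref{thm:main2}). The lengths $b_1,\dots,b_m$ will each be exactly one of the blocks $c_i^{(m)}$, while the remaining mass $1-\sum_{j\le m}b_j = \sum_{j>m}b_j$ is covered by one extra ``reservoir'' block. To pass from stage $m$ to stage $m+1$, I carve $b_{m+1}$ out of the reservoir block using the refinement lemma, obtaining $\Lambda_{m+1}$ and a smaller reservoir. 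The sets $\Lambda_1,\Lambda_2,\dots$ produced this way are pairwise disjoint by construction; for any $J$ with $|J|\le K$, letting $m=\max J$, the set $\bigcup_{j\in J}\Lambda_j$ is a union of at most $K$ of the blocks present at stage $m$, so Theorem~\ref{thm:main2} at that stage (or rather its conclusion applied to the subset $J$) gives that $\E(\bigcup_{j\in J}\Lambda_j)$ is a Riesz basis for $L^2(I)$ with $|I|=\sum_{j\in J}b_j$. One subtlety: we need the Riesz basis bounds for the combinations $\bigcup_{j\in J}\Lambda_j$ to be uniform in $m$, since the relevant statement at stage $\max J$ must not degrade as we keep refining the reservoir; this is where the restriction $|J|\le K$ enters, as the Avdonin-based bounds for a union of $\le K$ blocks depend only on $K$ and the geometry of those blocks, not on $N_m$.

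The main obstacle, and the place where I expect the real work to lie, is establishing the refinement lemma with \emph{uniform} constants, i.e.\ showing that splitting one block's frequency set does not spoil the Riesz basis bounds for the combinations and that these bounds can be controlled independently of how deep the refinement goes. Concretely, when $\Lambda_j$ is a Beatty--Fraenkel sequence of density $b_j$ and we split it as $\Lambda_j'\sqcup\Lambda_j''$ with densities $b_j',b_j''$, the two pieces should again be (perturbations of) Beatty--Fraenkel sequences, but the Avdonin perturbation parameter governing the distance from the ideal lattice $\tfrac1{b_j'}\Z$ may grow as $b_j'\to 0$. I would handle this by only ever splitting the reservoir block, whose density $\sum_{j>m}b_j$ stays bounded away from the problematic regime long enough — more precisely, by interleaving the enumeration so that at stage $m$ the newly carved length $b_{m+1}$ is not too small relative to the reservoir, reordering the $b_j$ if necessary (a harmless relabeling, since the conclusion is symmetric in the indexing once we track which $\Lambda_j$ goes with which $b_j$). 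A cleaner alternative, which I would pursue if the perturbation constants prove unwieldy, is to abandon the sequential reservoir and instead, for the fixed $K$, directly build all $\Lambda_j$ at once from a single Beatty-type partitioning scheme of $\Z$ adapted to the sequence $(b_j)$, using the fact that an infinite family of Beatty sequences with densities $b_j$ summing to $1$ can be chosen pairwise disjoint with union $\Z$, and then invoking the finite theorem's proof only for the at-most-$K$-fold unions; the $|J|\le K$ hypothesis is exactly what makes every instance of the Riesz basis verification a finite one, so no genuinely infinite Avdonin argument is ever required.
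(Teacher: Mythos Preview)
Your reservoir-splitting skeleton is exactly the paper's strategy, but you have the control variable wrong and consequently misplace the difficulties.

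First, the paper does not prove Theorem~\ref{thm:main2} and then deduce Theorem~\ref{thm:maincountable}; both are established by one inductive construction (Section~\ref{sec:proof}). Your ``refinement lemma'' is not a consequence of the statement of Theorem~\ref{thm:main2} but is the content of Theorem~\ref{thm:rearrangement}, and proving it is most of the work.

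The quantity that is tracked is not a Riesz-basis constant but an Avdonin parameter. Each $\Lambda_j$ is produced as the range of a map $\Phi_j\colon\frac{\Z+\frac12}{b_j}\to\Z+\frac12$ which is $\delta$-Avdonin with $\delta=4^{-K}$ chosen in advance. The splitting step (Theorem~\ref{thm:rearrangement}) shows that composing a new rounding map with the accumulated map $\Psi_{j-1}$ degrades the Avdonin constant additively by $3\epsilon_j$; choosing $\epsilon_j=\delta/(2^j\cdot 3)$ makes the total degradation over infinitely many splits sum to less than $\delta$, so every $\Phi_j$ is $\delta$-Avdonin uniformly in $j$. The combination step (Theorem~\ref{lem:implicitmap}) shows that the union of the ranges of two $\epsilon$-Avdonin maps is the range of a single $4\epsilon$-Avdonin map; hence any union of $|J|\le K$ of the $\Lambda_j$ is the range of a $4^{K-1}\delta=\tfrac14$-Avdonin map, and Avdonin's theorem (Theorem~\ref{thm:rieszbasis}) gives the Riesz basis. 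The bound $|J|\le K$ enters solely through this multiplicative factor~$4$ per combination.

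With this in hand, your two worries evaporate. There is no uniformity-in-$m$ issue: once $j\le m$ the set $\Lambda_j$ is frozen and never touched by further refinement of the reservoir, so for a fixed $J$ the Riesz-basis statement is decided at stage $\max J$ and does not involve any limit. And the size of $b_j$ is irrelevant: the rounding maps of Theorem~\ref{thm:rounding} are $\overrightarrow{0}$-Avdonin for \emph{every} pair $a,b>0$, so their Avdonin parameter can be taken arbitrarily small regardless of how small $b_{m+1}$ or the reservoir becomes. No reordering of the $b_j$ is needed, and your alternative ``infinite Beatty scheme'' is unnecessary (and would run into the fact that Fraenkel's theorem only handles two sequences at a time).
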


In contrast with Theorem~\ref{thm:main2}, Theorem~\ref{thm:maincountable} does not guarantee  subsets $\Lambda_j$, $j\in\N$, whose union is $\Z$.  This phenomenon in the countble setting is visible in the fact that the disjoint sets $2\Z+1, 4\Z+2, 8\Z+4, \ldots$ have the property that the sets $\mathcal E(2^j\Z+2^{j-1})$ form  orthogonal bases   for $L^2[2^{-j},2^{-j+1}]$, $j\in \N$, 
 but
 \begin{align*}
 \mathcal E\Big(\bigcup_{j=1}^\infty 2^j\Z+2^{j-1}\Big)=E\Big(\Z\setminus \{0\}\Big)\text{ is not a Riesz bases for }
   L^2\Big(\bigcup_{k=1}^\infty [2^{-j},2^{-j+1}]\Big)=L^2[0,1].
 \end{align*}

Note that Theorem~\ref{thm:main1} for a countable family holds verbatim by choosing $K=1$ in Theorem~\ref{thm:maincountable}. 
An immediate consequence of Theorem~\ref{thm:main2} is

\begin{corollary}
 Let $b_1,\ldots,b_n>0$ satisfy $\sum_{i=j}^n b_j\leq B$. Then exist pairwise disjoint sets $\Lambda_1,\ldots,\Lambda_n\subseteq \frac 1 B\Z$ with   the property that for any $J\subseteq \{1,\ldots,n\}$ we have $\E\big(\bigcup_{j\in J}\Lambda_j\big)$ is a Riesz basis for $L^2(I)$ for $I$ any interval of length $\sum_{j\in J} b_j$.
\end{corollary}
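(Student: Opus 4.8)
The plan is to deduce the corollary from Theorem~\ref{thm:main2} by a pure rescaling argument, reading the hypothesis as $\sum_{j=1}^n b_j\le B$. First I would record the elementary scaling principle for exponential Riesz bases: for any $c>0$ and any interval $I$, the dilation operator $D_c\colon L^2(I)\to L^2(cI)$, $(D_cf)(x)=c^{-1/2}f(x/c)$, where $cI=\{cx\colon x\in I\}$, is unitary and carries $e^{2\pi i\lambda t}\CHI_I$ to $c^{-1/2}e^{2\pi i(\lambda/c)x}\CHI_{cI}$. Consequently $\E(\Lambda)$ is a Riesz basis for $L^2(I)$ if and only if $\E(\tfrac1c\Lambda)$ is a Riesz basis for $L^2(cI)$, with the same Riesz bounds; moreover, as $I$ ranges over all intervals of a given length $\ell$, the set $cI$ ranges over all intervals of length $c\ell$.

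Next I would normalize the lengths. Put $c_j=b_j/B$ for $j=1,\dots,n$ and $c_{n+1}=1-\sum_{j=1}^n b_j/B\ge 0$. If $c_{n+1}>0$, apply Theorem~\ref{thm:main2} to $c_1,\dots,c_{n+1}$ (whose sum is $1$) to obtain pairwise disjoint sets $\tilde\Lambda_1,\dots,\tilde\Lambda_{n+1}\subseteq\Z$ such that $\E\big(\bigcup_{j\in J}\tilde\Lambda_j\big)$ is a Riesz basis for $L^2(I')$ whenever $|I'|=\sum_{j\in J}c_j$, for every $J\subseteq\{1,\dots,n+1\}$; if $c_{n+1}=0$, do the same with the index $n+1$ omitted. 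In either case we have the required bases for all $J\subseteq\{1,\dots,n\}$.

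Finally I would dilate back by $c=B$: set $\Lambda_j=\tfrac1B\tilde\Lambda_j\subseteq\tfrac1B\Z$ for $j=1,\dots,n$, which are pairwise disjoint since the $\tilde\Lambda_j$ are. Given $J\subseteq\{1,\dots,n\}$ and an interval $I$ with $|I|=\sum_{j\in J}b_j$, put $I'=I/B$, so $|I'|=\sum_{j\in J}c_j$; by construction $\E\big(\bigcup_{j\in J}\tilde\Lambda_j\big)$ is a Riesz basis for $L^2(I')$, hence by the scaling principle with $c=B$ the set $\E\big(\bigcup_{j\in J}\Lambda_j\big)=\E\big(\tfrac1B\bigcup_{j\in J}\tilde\Lambda_j\big)$ is a Riesz basis for $L^2(BI')=L^2(I)$, which is exactly the claim.

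There is no essential obstacle in this argument; the only points requiring care are the bookkeeping around the auxiliary interval of length $c_{n+1}$ when $\sum_{j=1}^n b_j<B$ (which is also why, in contrast to Theorem~\ref{thm:main2}, one should not expect $\bigcup_{j=1}^n\Lambda_j=\tfrac1B\Z$), and the verification that $D_c$ is unitary so that Riesz bounds are preserved exactly — both routine.
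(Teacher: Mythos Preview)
Your argument is correct and is precisely the rescaling reduction to Theorem~\ref{thm:main2} that the paper has in mind when it calls the corollary an ``immediate consequence'' without writing out a proof. The only extra detail you supply beyond what the paper leaves implicit is the introduction of the auxiliary length $c_{n+1}$ when $\sum_j b_j<B$, which is the right way to handle the non-strict inequality and also explains why the union $\bigcup_j\Lambda_j$ need not exhaust $\tfrac1B\Z$.
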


\vspace{.5cm}
Aside from describing related results from the literature in Section~\ref{sec:related} and stating some of our results in terms of sampling theory in Section~\ref{sec:sampling}, 
the body of this paper is dedicated to giving detailed proofs of our results.

The proof of Theorem~\ref{thm:main2} relies on utilizing three theorems from three branches of mathematics: the analytic Avdonin Theorem, 
a number theoretic result on Beatty-Fraenkel sequences and the probabilistic Weyl-Khinchin equidistribution theorem.  
In Section~\ref{sec:3results} we state customized version of these and illustrate their interplay to obtain in Section~\ref{sec:combiningABW} 
a proof of Theorem~\ref{thm:main1} for $n=2$ 
in the case of subintervals of irrational length. 
In Section~\ref{sec:twointervals} we present a proof of Theorem~\ref{thm:main1} for $n=2$ utilizing a different
approach than the proof in Section~\ref{sec:combiningABW} that will be used in the proofs of Theorems~\ref{thm:main2} and \ref{thm:maincountable} 
in subsequent sections.  The proofs of the main results in this paper rely on the concept of Avdonin maps which we introduce in Section~\ref{sec:AvdoninMaps}.  For example, in Section~\ref{sec:split}, we present a technical result establishing that the composition 
of Avdonin maps, with slight adjustments, is again an Avdonin map.  This allows us to prove Theorem~\ref{thm:main1} for any $n$ by means of an iterative
procedure that splits off one interval at a time.  The details of this proof are not given explicitly but are subsumed in the proofs of Theorems~\ref{thm:main2} 
and \ref{thm:maincountable}.  In Section~\ref{sec:comb} we show that the union of the ranges of two Avdonin maps is in fact
the range of a third Avdonin map.  This allows for the combination of Riesz bases of exponentials for two intervals of given lengths into a Riesz basis for an interval
whose length is their sum.  These results are combined in Section~\ref{sec:proof} to prove Theorems~\ref{thm:main2} and \ref{thm:maincountable}.

\section{Related work}\label{sec:related}{}

Let us begin by discussing exponential bases 
$\mathcal E(\Lambda)$ for $\Lambda\subseteq\R$.  The following theorem is 
central to our analysis and follows trivially by means of rescaling from the fact
that $\mathcal E(\Z)$ forms an orthonormal basis for $L^2[0,1]$.

\begin{theorem}\label{thm:translateanddilate}
Given $\alpha\in\R$ and $a>0$, the set of exponentials $\mathcal E(\frac{\Z+\alpha}{a})$ is an orthogonal basis,
and hence a Riesz basis, for $L^2(I)$ for any interval $I$ with $|I|=a$.
\end{theorem}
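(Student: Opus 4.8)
The plan is to transport the orthonormal basis $\mathcal{E}(\Z)$ of $L^2[0,1]$ to $L^2(I)$ through a composition of three unitary operators: a dilation moving $[0,1]$ to $[0,a]$, a translation moving $[0,a]$ to the given interval $I$, and multiplication by a unimodular exponential to account for the shift by $\alpha$. Throughout one uses the elementary facts that a unitary operator carries an orthonormal (resp.\ Riesz) basis to an orthonormal (resp.\ Riesz) basis, and that multiplying the elements of an orthogonal basis by unimodular scalars, or by one fixed positive scalar, again yields an orthogonal basis, in particular a Riesz basis.

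First I would dispose of the case $I=[0,a]$. The dilation $V\colon L^2[0,1]\to L^2[0,a]$, $(Vg)(t)=a^{-1/2}g(t/a)$, is unitary and sends $e^{2\pi i n t}$ to $a^{-1/2}e^{2\pi i (n/a)t}$; hence $\mathcal{E}(\tfrac1a\Z)$, suitably normalized, is an orthonormal basis for $L^2[0,a]$, equivalently an orthogonal basis as stated. Next, multiplication by the unimodular function $e^{2\pi i(\alpha/a)t}$ is a unitary operator on $L^2[0,a]$ mapping $e^{2\pi i(n/a)t}$ to $e^{2\pi i(n+\alpha)t/a}$, so it carries $\mathcal{E}(\tfrac1a\Z)$ onto $\mathcal{E}(\tfrac{\Z+\alpha}{a})$. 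A direct evaluation of $\int_0^a e^{2\pi i(n-m)t/a}\,dt$, which equals $a$ when $n=m$ and $0$ otherwise, reconfirms orthogonality and shows every basis function has norm $\sqrt a$, so the system is a Riesz basis.

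For a general interval $I=[c,c+a]$, the translation $U\colon L^2[0,a]\to L^2(I)$, $(Ug)(t)=g(t-c)$, is unitary and sends $e^{2\pi i\lambda t}$ to $e^{-2\pi i\lambda c}e^{2\pi i\lambda t}$. Thus $U$ maps $\mathcal{E}(\tfrac{\Z+\alpha}{a})$ on $[0,a]$ to the same family on $I$ up to the unimodular factors $e^{-2\pi i\lambda c}$, which affects neither orthogonality nor the Riesz basis property. Chaining the three steps yields the theorem.

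There is essentially no obstacle here; the only point demanding (minimal) care is the bookkeeping — verifying that each of the three operators is unitary and tracks the frequency set in the asserted way, and recalling that unimodular or uniform rescaling of basis vectors preserves orthogonal and Riesz bases. Alternatively, the dilation step can be skipped entirely by invoking directly that $\{a^{-1/2}e^{2\pi i n t/a}\}_{n\in\Z}$ is the standard Fourier basis of $L^2[0,a]$.
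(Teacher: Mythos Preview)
Your proof is correct and is essentially the same approach as the paper's: the paper simply states that the result ``follows trivially by means of rescaling from the fact that $\mathcal E(\Z)$ forms an orthonormal basis for $L^2[0,1]$,'' and your three unitary operators (dilation, modulation, translation) are exactly that rescaling argument written out in full.
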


There is a vast literature on the existence and properties of exponential basis
both from a mathematical perspective and from an engineering perspective in
the form of sampling of bandlimited functions (see Section~\ref{sec:sampling}).
Foundational texts and papers include \cite{Lev64, Lev96, Pav79, HruNikPav81, Hig96, Y80, Y01}.

While in general,  Riesz bases of exponentials can not be combined to form Riesz bases for larger intervals, the following result, well-attested in the literature though to our knowledge not stated in precisely this way, describes a setting where this is possible.
A proof is given in the appendix.

\begin{theorem}\label{thm:shiftedlattice1}
For $N\in\N$ and integer $0\le k\le N-1$, let $\Lambda_k=N\Z+k$.  Then the following hold.
\begin{itemize}
\item[{\rm (a)}] $\mathcal E(\Lambda_k)$ is a Riesz basis (in fact an orthogonal basis) for $L^2(I)$ for any interval $I$ with $|I|=\frac{1}{N}$.
\item[{\rm (b)}] For any $J\subseteq\{1,\,2,\,\dots,\,N\}$,
$$\mathcal\E\Big(\bigcup_{j\in J} \Lambda_j\Big)\ \mbox{is a Riesz basis for}\ L^2(I),$$
for any interval $I$ with $\displaystyle{|I|=\frac{|J|}{N}}$.
\end{itemize}
\end{theorem}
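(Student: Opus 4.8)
The plan is to deduce everything from Theorem~\ref{thm:translateanddilate} by an elementary periodization/Fourier-series argument. Part~(a) is immediate: $\Lambda_k = N\Z + k = \frac{\Z + k}{1/N}$ has the form $\frac{\Z+\alpha}{a}$ with $\alpha = k$ and $a = 1/N$, so Theorem~\ref{thm:translateanddilate} gives directly that $\mathcal E(\Lambda_k)$ is an orthogonal (hence Riesz) basis for $L^2(I)$ whenever $|I| = 1/N$. For part~(b), since the Riesz basis property is invariant under translating the interval (multiplying each exponential $e^{2\pi i\lambda t}$ by the unimodular constant $e^{2\pi i\lambda t_0}$ is a unitary diagonal change of basis), it suffices to treat $I = [0, |J|/N]$ with $|J| = m$, say $J = \{k_1,\dots,k_m\}\subseteq\{0,1,\dots,N-1\}$ (I will index from $0$ to $N-1$ to match the definition of $\Lambda_k$).

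The key step is to exhibit an explicit unitary equivalence between $L^2[0,m/N]$ with the candidate system $\mathcal E\big(\bigcup_{j\in J}\Lambda_j\big)$ and a space on which the system becomes an honest orthogonal basis. Concretely, I would split $[0,m/N]$ into the $m$ consecutive subintervals $I_\ell = [(\ell-1)/N, \ell/N]$, $\ell = 1,\dots,m$, each of length $1/N$, and identify $L^2[0,m/N] \cong \bigoplus_{\ell=1}^m L^2[0,1/N]$ via translation. Under this identification a frequency $\lambda = Nq + k_r \in \Lambda_{k_r}$ corresponds to the vector $\big(e^{2\pi i\lambda (\ell-1)/N}\big)_{\ell=1}^m = \big(e^{2\pi i (q + k_r/N)(\ell-1)}\big)_\ell = \big(e^{2\pi i k_r(\ell-1)/N}\big)_\ell$ times the fixed exponential $e^{2\pi i\lambda t}$ on $L^2[0,1/N]$; that is, as $q$ ranges over $\Z$ the "profile" on $L^2[0,1/N]$ runs through the orthogonal basis $\mathcal E(N\Z + k_r)$ of $L^2[0,1/N]$ (by part~(a)), while the coefficient vector across the $m$ blocks is the fixed vector $v_r := \big(e^{2\pi i k_r(\ell-1)/N}\big)_{\ell=1}^m \in \C^m$. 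So the Gram-type structure of $\mathcal E\big(\bigcup_{j\in J}\Lambda_j\big)$ decouples as a tensor/direct-sum over the orthogonal basis $\{\mathcal E(N\Z)\text{-translates}\}$ of $L^2[0,1/N]$ in one factor and the finite vector system $\{v_1,\dots,v_m\}\subseteq\C^m$ in the other. Hence $\mathcal E\big(\bigcup_{j\in J}\Lambda_j\big)$ is a Riesz basis for $L^2[0,m/N]$ if and only if $\{v_1,\dots,v_m\}$ is a basis for $\C^m$, with Riesz bounds equal to the extreme singular values of the matrix $[v_1\,|\,\cdots\,|\,v_m]$.

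The remaining point — and the only place any computation is needed — is that $\{v_1,\dots,v_m\}$ is linearly independent in $\C^m$. But the matrix with columns $v_r = \big(e^{2\pi i k_r(\ell-1)/N}\big)_{\ell=0}^{m-1}$ (reindexing $\ell-1 \mapsto \ell \in \{0,\dots,m-1\}$) is a Vandermonde matrix in the $m$ distinct nodes $\zeta_r := e^{2\pi i k_r/N}$, $r = 1,\dots,m$ (distinct because $0 \le k_1 < \cdots < k_m \le N-1$), whose determinant $\prod_{r<s}(\zeta_s - \zeta_r)$ is nonzero. This gives invertibility, hence finite positive Riesz bounds, completing the proof. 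I expect the main obstacle to be bookkeeping the unitary identification cleanly — in particular being careful that the system under consideration, per the footnote, is $\{e^{2\pi i\lambda t}\CHI_I(t)\}$ and not a subset of $L^2(\R)$ — rather than any genuine analytic difficulty; the argument is entirely finite-dimensional once the decoupling is set up. An alternative, perhaps cleaner, route I would mention is to invoke Theorem~\ref{thm:main2}'s ethos directly: interlace to reduce to $N\Z$, i.e. note $\bigcup_{j\in J}(N\Z+k_j)$ together with its complement $\bigcup_{j\notin J}(N\Z+k_j)$ partitions $\Z$, and apply part~(a) blockwise; but the Vandermonde computation is the most self-contained and I would present that.
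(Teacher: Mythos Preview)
Your proposal is correct and takes essentially the same approach as the paper: both split $I=[0,|J|/N]$ into $|J|$ consecutive subintervals of length $1/N$, exploit the $1/N$-periodicity of $e^{2\pi iNnt}$ to factor the system through the $|J|\times|J|$ Vandermonde matrix $\big(e^{2\pi i k_j m/N}\big)$, and read off the Riesz bounds (and completeness, via a frame inequality) from its invertibility. Your presentation packages this as a tensor/direct-sum decoupling while the paper writes out the integrals explicitly, but the underlying argument is identical.
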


Exponential systems whose frequencies are unions of general shifted lattices
in $\R$ and $\R^d$ have been studied as far back as Kohlenberg in 1953 
\cite{Koh53}.  Such sets have been used to construct Riesz bases of 
exponentials for polygons in the plane (see \cite{LyuRas00, Wal17})
and for polytopes in higher dimensions \cite{DebLev20}.  They have also
been used to solve problems related to reconstruction from averages (e.g., 
\cite{Wal94, Wal96, Wal98, GroHeiWal00, Sei00}).

\begin{remark}\label{rem:counteregkadec}{\rm 
In general, it is not the case that given discrete disjoint sets 
$\Lambda_1$, $\Lambda_2\subseteq\R$ with the property
that $\mathcal E(\Lambda_i)$ is a Riesz basis for $L^2(I_i)$, that $\mathcal E(\Lambda_1\cup\Lambda_2)$ is a Riesz basis for intervals
$I$ with $|I|=|I_1|+|I_2|$.  For example, let
$$\Lambda_1=\Big\{2n-\frac14\Big\}_{n>0}\,\cup\,\Big\{2n+\frac14\Big\}_{n<0}\,\cup\,\{0\},\ \mbox{and}\
\Lambda_2=\Big\{2n+\frac34\Big\}_{n>0}\,\cup\,\Big\{2n-\frac34\Big\}_{n<0}.$$
Then $\mathcal E(\Lambda_1)$, $\mathcal{E}(\Lambda_2)$ are each a Riesz basis for any interval of length $1/2$,
but $\mathcal E(\Lambda_1\cup\Lambda_2)$ is not a Riesz basis for the unit interval.  Details
on this example can be found in \cite{Y80}, see also \cite{Y01, KN15}.  Indeed, this example shows that the $1/4$ in 
Kadec's theorem (\ref{eqn:kadectheorem}) is best possible.
}
\end{remark}

The following related result is due to Avdonin, in the context of so-called basis extraction and basis extension,
see \cite{AI95} Theorems~11.4.16 and 11.4.26.

\begin{theorem}\label{thm:extraction-extension}
Let $\Lambda\subseteq\R$ be such that $\mathcal E(\Lambda)$ is a Riesz basis for $L^2[0,T]$ for some $T>0$.
Then the following hold.
\begin{itemize}
\item[{\rm (a)}] For any $0<T'<T$, there exists $\Lambda'\subseteq\Lambda$ such that $\mathcal E(\Lambda')$ is a Riesz basis for $L^2[0,T']$.
\item[{\rm (b)}] For any $T'>T$, there exists $\Lambda'\supseteq\Lambda$ such that $\mathcal E(\Lambda')$ is a Riesz basis for $L^2[0,T']$.
\end{itemize}
\end{theorem}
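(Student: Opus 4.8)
The plan is to pass to the Paley-Wiener picture and reduce the Riesz basis property of $\mathcal E(\Lambda)$ to a condition on a single entire \emph{generating function}, so that extension becomes multiplication of that function by a well-chosen factor and extraction becomes division by one. First, note that whether $\mathcal E(\Lambda)$ is a Riesz basis for $L^2(I)$ depends only on $|I|$: translating $I$ by $c$ multiplies each $e^{2\pi i\lambda t}$ by the unimodular constant $e^{-2\pi i\lambda c}$, changing neither the Riesz bounds nor completeness, so we may work on $[-T/2,T/2]$. The Fourier transform identifies $L^2[-T/2,T/2]$ with the Paley-Wiener space $\mathrm{PW}_T$ of $L^2(\R)$ entire functions of exponential type $\pi T$, and under this identification $\mathcal E(\Lambda)$ is a Riesz basis exactly when $\Lambda$ is a \emph{complete interpolating sequence} for $\mathrm{PW}_T$. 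By the Pavlov and Hrushchev-Nikolskii-Pavlov descriptions of exponential Riesz bases (\cite{Pav79,HruNikPav81}), this can be characterized in terms of a generating entire function $G_\Lambda$ with (simple) zero set $\Lambda$, the correct exponential type $\pi T$ and vertical growth, and a two-sided Helson-Szeg\H{o} (Muckenhoupt $A_2$-type) side condition on $|G_\Lambda|$ along $\R$.

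For part (b), set $\delta=T'-T>0$. I would take an explicit sine-type function $M$ of exponential type $\pi\delta$, e.g. the shifted, dilated sine $M(z)=\sin(\pi\delta(z-\alpha))$, with the real shift $\alpha$ chosen so that the separated set $\Lambda''=\{\alpha+n/\delta:n\in\Z\}$ is disjoint from the countable set $\Lambda$. Set $\Lambda'=\Lambda\cup\Lambda''$ and $G_{\Lambda'}=G_\Lambda\cdot M$. Then $G_{\Lambda'}$ is entire of exponential type $\pi T+\pi\delta=\pi T'$ with zero set exactly $\Lambda'\supseteq\Lambda$, and it only remains to check that $|G_{\Lambda'}|$ on $\R$ still satisfies the generating-function side condition; that makes $\Lambda'$ a complete interpolating sequence for $\mathrm{PW}_{T'}$ and hence $\mathcal E(\Lambda')$ a Riesz basis for $L^2[0,T']$.

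For part (a), set $\delta=T-T'>0$ and run the reverse construction: select a subset $\Lambda''\subseteq\Lambda$ of Beurling density $\delta$ that is regularly distributed inside $\Lambda$ --- roughly, keep every $\lceil T/\delta\rceil$-th point of $\Lambda$ --- so that a regularized product $H(z)=\prod_{\lambda\in\Lambda''}(1-z/\lambda)$ is a sine-type function of exponential type $\pi\delta$ dividing $G_\Lambda$. Picking such a subsequence out of $\Lambda$ is exactly where an equidistribution / Beatty-Fraenkel selection enters. Then $G'=G_\Lambda/H$ is entire of exponential type $\pi T-\pi\delta=\pi T'$ with zero set $\Lambda'=\Lambda\setminus\Lambda''\subseteq\Lambda$, and one checks that $|G'|=|G_\Lambda|/|H|$ still satisfies the side condition, so $\mathcal E(\Lambda')$ is a Riesz basis for $L^2[0,T']$.

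I expect the genuine obstacle, in both parts, to be precisely this last verification: that multiplying (part (b)) or dividing (part (a)) the modulus of the generating function by the chosen sine-type factor preserves the Helson-Szeg\H{o}/$A_2$-type side condition. For multiplication the danger is that a product of admissible weights need not be admissible; for division the danger is that the lower estimate is destroyed near the removed zeros unless $\Lambda''$ is sufficiently spread out --- which is why the regularity of the extracted subsequence is essential. I would control it by reducing to the model case $G_\Lambda=\sin(\pi T z)$ with $M$, $H$ shifted sines, where all quantities are explicit, and then transferring to the general case via the stability of the side condition under the relevant (``$1/4$ in the mean'') perturbations; this stability is the substance of Avdonin's argument and is the point at which I would rely on the deep input of \cite{AI95,Pav79,HruNikPav81}.
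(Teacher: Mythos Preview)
The paper does not give its own proof of this theorem; it is quoted as a known result of Avdonin, with a pointer to \cite{AI95}, Theorems~11.4.16 and 11.4.26. There is therefore no in-paper argument to compare your proposal against.

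Your outline is nonetheless the standard architecture one finds in that reference and the surrounding literature: encode the Riesz basis property through a generating entire function via the Pavlov / Hru\v{s}\v{c}\"{e}v--Nikolski\u{\i}--Pavlov characterization, raise or lower the exponential type by multiplying or dividing by a sine-type factor, and then check that the Muckenhoupt side condition survives. You have located the genuine work precisely and are candid that you have not carried it out, so this is an honest and well-aimed sketch rather than a proof. One correction, however: the appeal to ``Beatty--Fraenkel selection'' in part~(a) is misplaced. Beatty--Fraenkel sequences are specific to rounding the lattices $\frac{\Z+1/2}{a}$ into $\Z+\tfrac12$ and are a tool particular to the constructions elsewhere in this paper; they do not help extract a sine-type divisor from the generating function of a \emph{general} complete interpolating sequence $\Lambda\subseteq\R$. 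The extraction argument in \cite{AI95} instead selects $\Lambda''\subseteq\Lambda$ so that it is close, in the $1/4$-in-the-mean sense, to a lattice of the correct density --- that is what forces its regularized product to be sine-type --- and Weyl equidistribution plays no role there.
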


What is not explicit in Theorem~\ref{thm:extraction-extension} (a) is whether the complement of an extracted basis is also a Riesz basis
for the complement of the interval.  However, if the basis is extracted from an orthonormal basis then this always holds.
A more general version of the following result appears in \cite{MM09} (Proposition 2.1) and a proof is 
provided in \cite{BCMS16} (Corollary 5.6).

\begin{theorem}\label{thm:HilbertComplement}
Let $S\subseteq[0,1]$ and suppose that for some $\Lambda\subseteq\Z$, $\mathcal{E}(\Lambda)$ is a Riesz basis for $L^2(S)$.
Then $\mathcal{E}(\Z\setminus\Lambda)$ is a Riesz basis for $L^2([0,1]\setminus S)$.
\end{theorem}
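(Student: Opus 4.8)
The plan is to prove Theorem~\ref{thm:HilbertComplement} by exploiting that $\mathcal{E}(\Z)$ is an orthonormal basis for $L^2[0,1]$ together with the duality between a Riesz basis and its biorthogonal dual. First I would record the relevant abstract fact: if $\{e_\lambda\}_{\lambda\in\Lambda}$ is a Riesz basis for a closed subspace $V$ of a Hilbert space $H$, and $\{e_\lambda\}_{\lambda\in\Z}$ is an orthonormal basis for $H$ with $\Lambda\subseteq\Z$, then $\{e_\lambda\}_{\lambda\in\Z\setminus\Lambda}$ is a Riesz basis for $V^\perp$. Here $H=L^2[0,1]$, $e_\lambda(t)=e^{2\pi i\lambda t}$, and the point is that $L^2(S)$ and $L^2([0,1]\setminus S)$, viewed (via the footnote's convention) as the ranges of multiplication by $\CHI_S$ and $\CHI_{[0,1]\setminus S}$, are orthogonal complements of each other inside $L^2[0,1]$, since $\CHI_S+\CHI_{[0,1]\setminus S}=1$ a.e.\ and the two subspaces are orthogonal.

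The core of the argument is this abstract lemma, which I would prove as follows. Write $P$ for the orthogonal projection of $H$ onto $V$, so $I-P$ is the projection onto $V^\perp$. The hypothesis ``$\mathcal{E}(\Lambda)$ is a Riesz basis for $L^2(S)$'' in the paper's sense means precisely that $\{P e_\lambda\}_{\lambda\in\Lambda}=\{\CHI_S e_\lambda\}_{\lambda\in\Lambda}$ is a Riesz basis for $V=\CHI_S L^2[0,1]$. I want to conclude that $\{(I-P)e_\lambda\}_{\lambda\in\Z\setminus\Lambda}$ is a Riesz basis for $V^\perp$. The mechanism is the standard fact that a sequence $\{f_\lambda\}$ in a Hilbert space is a Riesz basis for its closed span iff there are constants $0<A\le B$ with $A\sum|c_\lambda|^2\le\|\sum c_\lambda f_\lambda\|^2\le B\sum|c_\lambda|^2$ for all finitely supported $(c_\lambda)$, together with completeness. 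Using that $\{e_\lambda\}_{\lambda\in\Z}$ is orthonormal, for any finitely supported scalars indexed by $\Z\setminus\Lambda$ one has $\|\sum_{\lambda\notin\Lambda}c_\lambda (I-P)e_\lambda\|^2 = \|\sum_{\lambda\notin\Lambda} c_\lambda e_\lambda\|^2-\|\sum_{\lambda\notin\Lambda}c_\lambda Pe_\lambda\|^2 = \sum|c_\lambda|^2 - \|P(\sum c_\lambda e_\lambda)\|^2$; the upper Riesz bound is then immediate ($\le\sum|c_\lambda|^2$) and the lower bound follows because $\sum_{\lambda\notin\Lambda}c_\lambda e_\lambda$ is orthogonal to $\overline{\Span}\{e_\lambda:\lambda\in\Lambda\}$, hence its projection onto $V=\overline{\Span}\{Pe_\lambda:\lambda\in\Lambda\}$ is controlled strictly away from full norm — this is where the Riesz (not merely Bessel) property of $\mathcal{E}(\Lambda)$, i.e.\ that $V$ is a genuine ``angle-bounded'' perturbation, gets used. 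Completeness of $\{(I-P)e_\lambda\}_{\lambda\notin\Lambda}$ in $V^\perp$ follows by a dimension/orthogonality count: if $g\in V^\perp$ is orthogonal to all $(I-P)e_\lambda=e_\lambda-Pe_\lambda$ for $\lambda\notin\Lambda$, then since $g\perp V$ we get $\langle g,e_\lambda\rangle=0$ for $\lambda\notin\Lambda$, and also $\langle g,e_\lambda\rangle=0$ for $\lambda\in\Lambda$ because $g\in V^\perp=(\Span\{Pe_\lambda\})^\perp$ and $g\perp V$ forces $g\perp e_\lambda$; hence $g\perp e_\lambda$ for all $\lambda\in\Z$, so $g=0$.

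I expect the main obstacle to be the lower Riesz bound for $\{(I-P)e_\lambda\}_{\lambda\notin\Lambda}$, i.e.\ quantitatively controlling $\|P f\|$ for $f\in\overline{\Span}\{e_\lambda:\lambda\notin\Lambda\}=V'^{\,\perp}$ where $V'=\overline{\Span}\{e_\lambda:\lambda\in\Lambda\}$. The clean way around it is to dualize: since $\{Pe_\lambda\}_{\lambda\in\Lambda}$ is a Riesz basis for $V$, it has a biorthogonal dual $\{h_\lambda\}_{\lambda\in\Lambda}\subseteq V$; the synthesis operator of $\{e_\lambda\}_{\lambda\in\Z}$ is unitary (it's an orthonormal basis), so $P$ restricted appropriately is an isomorphism $\ell^2(\Lambda)\to V$, and one reads off that $\{(I-P)e_\lambda\}_{\lambda\notin\Lambda}$ together with $\{(I-P)e_\lambda\}_{\lambda\in\Lambda}$ spans everything with uniform frame bounds; a short linear-algebra manipulation with the block decomposition of the unitary $\Z\times\Z$ ``identity'' relative to the splitting $\Lambda\sqcup(\Z\setminus\Lambda)$ then isolates the $\Z\setminus\Lambda$ block acting into $V^\perp$ as an isomorphism. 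Alternatively — and this is probably the shortest route for the writeup — I would simply cite Theorem~\ref{thm:HilbertComplement} to the references already given, \cite{MM09} (Proposition 2.1) and \cite{BCMS16} (Corollary 5.6), and present the orthogonal-projection argument above only as a self-contained sketch, noting that $L^2(S)$ and $L^2([0,1]\setminus S)$ are complementary orthogonal subspaces of $L^2[0,1]$ precisely because $S$ and $[0,1]\setminus S$ partition $[0,1]$.
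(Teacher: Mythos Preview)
The paper does not supply its own proof of this theorem; it simply attributes the result to \cite{MM09} (Proposition~2.1) and \cite{BCMS16} (Corollary~5.6) and moves on. So there is no in-paper argument to compare against, and your closing suggestion---just cite those references---is precisely what the authors do.

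Your self-contained sketch is headed in the right direction, and the block-decomposition route you allude to at the end (writing the unitary synthesis operator $\ell^2(\Z)\to L^2[0,1]$ as a $2\times 2$ block relative to $\ell^2(\Lambda)\oplus\ell^2(\Z\setminus\Lambda)$ and $V\oplus V^\perp$, and showing that invertibility of the $(\Lambda,V)$ block forces invertibility of the $(\Z\setminus\Lambda,V^\perp)$ block) is correct and is the cleanest way to carry this out. However, the direct argument you write first has a genuine gap in the completeness step. You correctly obtain $\langle g,e_\lambda\rangle=0$ for $\lambda\notin\Lambda$. But your justification that $\langle g,e_\lambda\rangle=0$ for $\lambda\in\Lambda$ ``because $g\in V^\perp=(\Span\{Pe_\lambda\})^\perp$ and $g\perp V$ forces $g\perp e_\lambda$'' is circular: $g\in V^\perp$ and $g\perp V$ are the same hypothesis, and they only yield $g\perp Pe_\lambda$, not $g\perp e_\lambda$. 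The repair is to use the Riesz \emph{lower} bound of $\{Pe_\lambda\}_{\lambda\in\Lambda}$: once you know $\langle g,e_\lambda\rangle=0$ for $\lambda\notin\Lambda$, write $g=\sum_{\lambda\in\Lambda}c_\lambda e_\lambda$; then $g\in V^\perp$ gives $0=Pg=\sum_{\lambda\in\Lambda}c_\lambda Pe_\lambda$, and the Riesz lower bound forces $c\equiv 0$, hence $g=0$. Your lower Riesz bound step (``controlled strictly away from full norm'') is also too vague as stated; one way to make it precise is to use the frame lower bound $A>0$ of $\{Pe_\lambda\}_{\lambda\in\Lambda}$ in $V$: for $g\in V$, $\sum_{\lambda\in\Lambda}|\langle g,e_\lambda\rangle|^2=\sum_{\lambda\in\Lambda}|\langle g,Pe_\lambda\rangle|^2\ge A\|g\|^2$, so by Parseval $\sum_{\lambda\notin\Lambda}|\langle g,e_\lambda\rangle|^2\le(1-A)\|g\|^2$, and Cauchy--Schwarz then gives $\|Pf\|\le\sqrt{1-A}\,\|f\|$ for every $f\in\overline{\Span}\{e_\lambda:\lambda\notin\Lambda\}$.
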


\begin{remark}\label{rem:countergedaegwan}
{\rm 
The orthogonality of the basis $\mathcal{E}(\Z)$ for $L^2[0,1]$ is essential in Theorem~\ref{thm:HilbertComplement}.  
The result fails if $\Z$ is replaced by even a small perturbation of $\Z$, as shown by the following example of Dae Gwan Lee \cite{L18}.  Let
$$\Lambda = \{2n\}_{n\in\N}\ \cup\ \{2n-1+\epsilon\}_{n>0}\ \cup\ \{2n+1-\epsilon\}_{n<0}.$$
Then $\Lambda$ is a perturbation of $\Z$ by no more than $\epsilon>0$, so that as long as $\epsilon<1/4$, (\ref{eqn:kadectheorem}) implies that
$\mathcal{E}(\Lambda)$ is a Riesz basis for $L^2[0,1]$.
Now taking
$$\Lambda_0 = \{2n\}_{n\le 0}\ \cup\ \{2n-1+\epsilon\}_{n>0},$$
it follows that $\Lambda_0$ is a perturbation of $\displaystyle{\Big\{2n - \Big(\frac{1}{2}-\frac{\epsilon}{2}\Big)\Big\}}$
by no more than $\displaystyle{\frac{1}{2}-\frac{\epsilon}{2}}$.  Therefore, by (\ref{eqn:kadectheorem}) with $a=1/2$, it follows that
$\mathcal{E}(\Lambda_0)$ is a Riesz basis for $L^2[0, 1/2]$.  

Now consider
$$\Lambda\setminus\Lambda_0 = \{2n\}_{n>0}\ \cup\ \{2n+1-\epsilon\}_{n<0}.$$
Note that the set $\Lambda\setminus\Lambda_0\,\cup\,\{0\}$ is a perturbation of $\displaystyle{\Big\{2n + \bigg(\frac{1}{2}-\frac{\epsilon}{2}\bigg)\Big\}}$,
and so by (\ref{eqn:kadectheorem}), $\mathcal{E}(\Lambda\setminus\Lambda_0\,\cup\{0\})$  forms a Riesz basis for $L^2[1/2,1]$.
Consequently, $\mathcal{E}(\Lambda\setminus\Lambda_0)$ is not complete and so does not form a Riesz basis.
}
\end{remark}

Theorem~\ref{thm:extraction-extension}(a) implies the following result of Seip \cite{S95}. 

\begin{theorem}\label{thm:seipresult}
For any $0 < \alpha < 1$, there exists a subset
$\Lambda\subseteq \Z$ such that $\mathcal E(\Lambda)$ is a Riesz basis for
$L^2[0,\alpha]$.
\end{theorem}

This result was used in \cite{S95} to construct exponential bases of the form 
$\E(\Lambda) = \{e^{2\pi i\lambda x}\colon \lambda\in\Lambda\}$,
$\Lambda\subseteq\Z$, for 
$L^2(I_1\,\cup\,I_2\,\cup\,\cdots\,\cup\, I_n)$,
where $\{I_j\}_{j=1}^n$ is a disjoint collection of subintervals
of $[0,1]$ whose lengths satisfy certain restrictions.

In a recent breakthrough result, Kozma and Nitzan \cite{KN15}
(see also \cite{KN16}) have completely solved this problem by showing that for any collection of subintervals $\{I_j\}_{j=1}^n$
of $[0,1]$, there exists $\Lambda\subset\Z$ such that $\E(\Lambda)$ is a Riesz basis for $L^2\Big(\bigcup_{j=1}^n I_j\Big)$
(for more on this problem, see e.g., \cite{Kat96, BezKat93, LyuSpi96, LyuSei97}
and references therein).
As a central part of the proof, the authors show that under some circumstances it is possible to ``combine
Riesz bases'' in the sense described above, but as the authors point out in the paper, their
construction does not necessarily give subsets $\Lambda_j$ such that $\Lambda = \bigcup_{j=1}^n \Lambda_j$ and
$\E(\Lambda_j)$ forms a Riesz basis for $L^2(I_j)$.  

Finally we note the following basis partition result of Lyubarskii and Seip 
\cite{LyuSei01}.

\begin{theorem}\label{thm:lyusei01}
Let $\E(\Lambda)$ be a Riesz basis of exponentials for $L^2[-1/2,1/2]$.  
For each $0<a<1$, there is a splitting
$$\Lambda = \Lambda'\,\cup\,\Lambda'',\ \Lambda'\,\cap\Lambda''=\emptyset$$
such that $\E(\Lambda')$ and $\E(\Lambda'')$ are Riesz bases for
$L^2[-a/2,a/2]$ and $L^2[-1/2(1-a),1/2(1-a)]$ respectively.
\end{theorem}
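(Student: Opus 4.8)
The plan is to recast the splitting as a factorization problem for generating functions. Since $\E(M)$ is a Riesz basis for one interval of length $\ell$ if and only if it is so for every interval of length $\ell$ (the translation $t\mapsto t-c$ is unitary and merely modulates the exponentials, which does not affect the Riesz-basis property), it suffices to produce a partition $\Lambda=\Lambda'\cup\Lambda''$ with $\E(\Lambda')$ a Riesz basis for \emph{some} interval of length $a$ and $\E(\Lambda'')$ a Riesz basis for \emph{some} interval of length $1-a$. By the Khrushchev--Nikolskii--Pavlov description of exponential Riesz bases, $\E(\Lambda)$ is a Riesz basis for $L^2[-1/2,1/2]$ exactly when $\Lambda$ is a finite union of separated sets lying in a horizontal strip and is the zero set of a \emph{generating function} $G$, entire of exponential type $\pi$, whose modulus on $\R$ satisfies a Muckenhoupt $A_2$ condition once the (near-real) zeros are normalized out; the same statement with exponential type $\pi a$, resp.\ $\pi(1-a)$, characterizes the Riesz-basis property for intervals of length $a$, resp.\ $1-a$. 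Hence it is enough to factor $G=G_1G_2$ into entire functions of exponential types $\pi a$ and $\pi(1-a)$, each satisfying the corresponding modulus condition; since the zeros of a complete interpolating sequence are simple, the zero sets $\Lambda_1,\Lambda_2$ of $G_1,G_2$ are then disjoint with $\Lambda_1\cup\Lambda_2=\Lambda$, which is the required partition.

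Next I would build $G_1$ by selecting zeros. Enumerate $\Lambda=\{\lambda_m\}_{m\in\Z}$ with $\mathrm{Re}\,\lambda_m$ non-decreasing and set $\Lambda':=\{\lambda_{n_k}:k\in\Z\}$, where $\{n_k\}\subseteq\Z$ is a Beatty--Fraenkel sequence of density $a$ (for rational $a$, a periodic choice of the same density); let $G_1$ be the canonical product over $\Lambda'$ and $G_2:=G/G_1$. Two bookkeeping points must be settled: that $G_2$ is again entire — the quotient of the canonical products over the nested zero sets differs from the canonical product over $\Lambda\setminus\Lambda'$ by at most a factor $e^{cz}$ with $c$ bounded in terms of the regularity of $\Lambda$ and of $\{n_k\}$ — and that $G_1$ and $G_2$ have exponential types exactly $\pi a$ and $\pi(1-a)$. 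Here the arithmetic of Beatty--Fraenkel sequences enters: their consecutive gaps take only two values and, by Weyl equidistribution, the selected and discarded index sets have densities $a$ and $1-a$, which via Lindel\"of-type estimates for canonical products fixes the two types.

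The hard part is verifying the $A_2$ conditions for $G_1$ and $G_2$. Writing $W,W_1,W_2$ for the normalized moduli of $G,G_1,G_2$, separation gives $\mathrm{dist}(x,\Lambda)\asymp\mathrm{dist}(x,\Lambda')\,\mathrm{dist}(x,\Lambda\setminus\Lambda')$ up to constants, so $W_2\asymp W/W_1$ and the problem collapses to two assertions: (i) $W_1$ is comparable to a genuine $A_2$ weight — equivalently, $\E(\Lambda')$ is a Riesz basis for an interval of length $a$; and (ii) $W/W_1\in A_2$. Assertion (i) is an instance of Avdonin's extraction theorem (Theorem~\ref{thm:extraction-extension}(a)), but it must be re-established for \emph{this} explicit $\Lambda'$: merely extracting \emph{some} sub-basis, as in Theorem~\ref{thm:extraction-extension}(a), gives no control on the complement, cf.\ Remark~\ref{rem:countergedaegwan}. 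One does this by comparing $G_1$ with the sine-type model attached to the arithmetic progression underlying $\{n_k\}$ and using the near-arithmetic regularity of the Beatty--Fraenkel sequence together with the $A_2$ property of $W$. Assertion (ii) is the real obstacle, because a quotient of $A_2$ weights need not be $A_2$; here I would use the structure theory of $A_2$ weights — the Helson--Szego/Coifman--Fefferman representation $W=e^{u+\tilde v}$ with $u,v\in L^\infty$ and $\|v\|_\infty<\pi/2$, together with P.\ Jones' factorization of $A_2$ as a product of an $A_1$ weight and the reciprocal of an $A_1$ weight — to show that removing the Beatty--Fraenkel subsequence splits the ``phase'' of $G$ into two phases of the correct linear growth while keeping both oscillatory parts within the admissible range; the bounded gap discrepancy of the Beatty--Fraenkel sequence is exactly what prevents either factor from leaving $A_2$. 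I expect this weight-splitting estimate to be the crux, the remainder being canonical-product bookkeeping. (A route closer to \cite{LyuSei01} would instead work with the de Branges chain of subspaces $PW_{\pi\tau}$, $0\le\tau\le\pi$, inside $PW_\pi$, extracting $\Lambda'$ as a complete interpolating sequence for the intermediate space $PW_{\pi a}$; the same difficulty resurfaces there as the need to keep $\Lambda\setminus\Lambda'$ a complete interpolating sequence for the complementary structure.)
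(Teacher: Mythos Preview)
The paper does not contain a proof of Theorem~\ref{thm:lyusei01}. It is stated in Section~\ref{sec:related} (``Related work'') as a result of Lyubarskii and Seip, with the attribution ``Finally we note the following basis partition result of Lyubarskii and Seip \cite{LyuSei01},'' and no argument is given. Hence there is nothing in the paper to compare your proposal against.

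As to the proposal itself: your outline is a plausible strategy in the spirit of the Pavlov/Hru\v{s}\v{c}\"{e}v--Nikolski\u{\i}--Pavlov machinery, and you correctly identify the real difficulty, namely that a quotient of $A_2$ weights need not be $A_2$, so that controlling the complement is not automatic (cf.\ Remark~\ref{rem:countergedaegwan}). However, what you have written is a program rather than a proof: the key step---showing that extracting a Beatty--Fraenkel subsequence from an \emph{arbitrary} complete interpolating sequence yields factors whose normalized moduli both remain in $A_2$---is asserted but not carried out, and the Helson--Szeg\H{o}/Jones factorization by itself does not obviously deliver this without further quantitative input tied to the specific extraction. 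The actual Lyubarskii--Seip argument in \cite{LyuSei01} is, as you note parenthetically, organized around the de~Branges chain of spaces; if you want to complete a proof along your lines you would need to supply the weight-splitting estimate in full, and that is where the substance lies.
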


\section{Three fundamental tools}\label{sec:3results}

The results presented in this paper are based on the combination of three tools from three different mathematical disciplines. Before stating customized versions of these, we first define Riesz bases. The transition from orthonormal to Riesz bases is necessary as, for example, the $L^2[0,a]$ inner product of two exponentials with integer (or rational) frequencies never vanishes if $a$ is irrational. Hence,
obtaining an orthogonal basis $\mathcal E(\Lambda)$  with $\Lambda\subseteq \Z$ for $L^2[0,a]$ is not possible if $a$ is irrational, and we have to relax the customary concept of a basis of a Hilbert space, that is, of an orthonormal basis, to the concept of a Riesz basis \cite{OleC16}.

\begin{definition}\label{def:RB}
  The set of exponentials
   $\mathcal E(\Lambda)$, $\Lambda\subseteq\R$, is a {\it Riesz basis} of $L^2[a,b]$ if
   $\overline{\rm span}\,\mathcal E(\Lambda) {=} L^2[a,b]$ and for some $0{<}A,\!B{<}\infty$,
  \begin{align}\label{eqn:RB}
   A\sum_\lambda |c_\lambda|^2
   \leq \int_a^b \Big|\sum_{\lambda \in \Lambda} c_\lambda e^{2\pi i \lambda x}\Big|^2\, dx
  \leq B\sum_\lambda |c_\lambda|^2, \quad \{c_\lambda\}\in \ell^2(\Lambda).
  \end{align}
\end{definition}

To prove Theorems~\ref{thm:main1} -- \ref{thm:maincountable}, we shall not construct  subsets of the integers directly, but equivalently subsets of the integers shifted by $\frac 1 2$, that is, subsets of $\Z+\frac 1 2$.  
In the following, $
\lfloor x \rfloor$ denotes the largest element in  $\Z$ that is not larger than $x$.

\subsection{Avdonin's criterion for exponential Bases}
\label{sec:AvdoninMaps}

To show that $\mathcal E(\Lambda)$ with $\Lambda=\{\lambda_k\}\subseteq \Z$ is a Riesz bases for $L^2[0,a]$, we may consider the $\lambda_k$ as a perturbation of elements $\myfrac{k+\frac 1 2}a$ of a reference lattice and thereby $\mathcal E(\Lambda)$ as a perturbation of the orthogonal basis  $\mathcal E\Big(\myfrac{\Z+\frac 1 2}a\Big)$  of $L^2[0,a]$. For example,
Kadec's Theorem \cite{K64} states that if the elements of $\Lambda=\{\lambda_k\}\subset \R $ are close to the reference lattice $\myfrac{\Z+\frac 1 2}a$ in the sense that
\begin{equation}\label{eqn:kadectheorem}
  \sup_{k\in\Z} \Big|\lambda_k- \frac {k+\frac 1 2}a \Big|< \frac 1{4a},
\end{equation} then $\mathcal E(\Lambda)$ is a Riesz basis for $L^2([0,a])$ and therefore for any
interval $I$ of length $a$.

A substantial generalization of Kadec's theorem is Avdonin's theorem which plays a central role in our analysis.  Before stating it, we introduce the following vocabulary.

\begin{definition}\label{def:avdoninmap}
Let $\epsilon,\,a>0$ and $\alpha\in\R$.  An injective map
$$\varphi\colon\frac{\Z+\alpha}{a}\to\R$$ with separated range, that is, there exists $\delta>0$ such that  $|y-y'|>\delta$ for all $y\neq y'$ in the range of $\varphi$,
is an {\em $\epsilon$-Avdonin map} for the lattice $\frac{\Z+\alpha}{a}$ if there exists $R>0$ such that
\begin{equation}\label{eqn:avdoninmap}
\sup_{p\in\Z}\ \Big|\frac{1}{R}
  \sum_{\frac{k+\alpha}{a}\in[pR,(p+1)R)}
  \varphi\Big(\frac{k+\alpha}{a}\Big)
    -  \frac{k+\alpha}{a} \Big|
  < \epsilon.
  \end{equation}

If for some $R>0$ we can replace $<\epsilon$ by $=0$ on the right hand side
of \eqref{eqn:avdoninmap},then we speak of a $0$-Avdonin map and any 
$\varphi$ that is an $\epsilon$-Avdonin map for any $\epsilon>0$
\eqref{eqn:avdoninmap} is referred to as $\overrightarrow{0}$-Avdonin map.
\end{definition}

\begin{remark}\label{rem:avdoninmap}
  \rm
  If $\varphi$ is an $\epsilon$-Avdonin map for the lattice $\frac{\Z+\alpha}{a}$ satisfying
  \begin{align*}
  \sup_{k\in\Z}\ \Big|\varphi\Big(\frac{k+\alpha}{a}\Big)-\frac{k+\alpha}{a}\Big|\leq M
  \end{align*} for some $M\in\R$, then exists an $R_0$ such that \eqref{eqn:avdoninmap} holds for all $R>R_0$, a fact that will be repeatedly used later.
\end{remark}

Using Definition~\ref{def:avdoninmap}, Avdonin's theorem for regularly spaced frequency sets\footnote{Note that in \cite{A74}, 
Avdonin also considered general zero sets of so-called sine-type functions as reference sets.} can now be stated as follows \cite{A74}.

\begin{theorem}\label{thm:rieszbasis}
If  $\varphi$ is an $\frac{1}{4a}$-Avdonin map for the lattice $\frac{\Z+\alpha}{a}$, then
$\mathcal{E}\Big(\varphi\Big(\frac{\Z+\alpha}{a}\Big)\Big)$ is a Riesz basis for $L^2(I)$
for any interval $I$ with $|I|=a$.
\end{theorem}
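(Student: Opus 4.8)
The plan is to obtain Theorem~\ref{thm:rieszbasis} directly from Avdonin's theorem \cite{A74}; the statement is essentially a reformulation of that theorem in the vocabulary of Definition~\ref{def:avdoninmap}, all of the analytic content already residing in \cite{A74}, so the task reduces to a normalization of the interval together with a transcription of hypotheses. I would begin with the interval reduction. Translating an interval by $c\in\R$ multiplies each $e^{2\pi i\lambda x}$ by the unimodular constant $e^{2\pi i\lambda c}$, and the induced coefficient map $\{c_\lambda\}\mapsto\{e^{2\pi i\lambda c}c_\lambda\}$ is an isometry of $\ell^2(\Lambda)$; hence the inequalities \eqref{eqn:RB} hold on $L^2(I)$ with constants $A,B$ precisely when they hold on $L^2(I+c)$ with the same $A,B$, and $\overline{\mathrm{span}}$-completeness transfers verbatim. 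So it suffices to show that $\mathcal E\big(\varphi(\tfrac{\Z+\alpha}{a})\big)$ is a Riesz basis for $L^2[0,a]$.

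Next I would apply Avdonin's theorem with the reference set $\tfrac{\Z+\alpha}{a}$ itself, which is admissible in the generality of \cite{A74}: it is the zero set of the sine-type function $z\mapsto\sin\pi(az-\alpha)$, and the orthogonal basis that this zero set generates is exactly the basis $\mathcal E(\tfrac{\Z+\alpha}{a})$ for $L^2[0,a]$ of Theorem~\ref{thm:translateanddilate}. Avdonin's theorem then asserts that a separated set $\Lambda=\{\lambda_k\}_{k\in\Z}$, paired with the reference points $\mu_k:=\tfrac{k+\alpha}{a}$, is a Riesz basis for $L^2[0,a]$ as soon as there is a grouping of the $\mu_k$ into consecutive blocks over which the averaged displacement $\lambda_k-\mu_k$ stays below one quarter of the lattice mesh $\tfrac1a$, that is, precisely condition \eqref{eqn:avdoninmap} with $\epsilon=\tfrac1{4a}$. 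The set $\Lambda=\{\varphi(\mu_k)\}$ meets these hypotheses by assumption: $\varphi$ is injective with separated range and satisfies \eqref{eqn:avdoninmap}. This settles the case $I=[0,a]$, and by the previous paragraph the conclusion holds for every interval of length $a$.

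If one prefers to invoke only the classical $L^2[0,1]$/$\Z$ form of Avdonin's theorem, I would instead precompose with the unitary $Uf(t)=\sqrt a\,e^{-2\pi i\alpha t}f(at)\colon L^2[0,a]\to L^2[0,1]$, which carries $e^{2\pi i\lambda x}\CHI_{[0,a]}$ to $e^{2\pi i(a\lambda-\alpha)t}\CHI_{[0,1]}$, the reference lattice $\tfrac{\Z+\alpha}{a}$ onto $\Z$, and $\varphi(\tfrac{\Z+\alpha}{a})$ onto the separated set $\{a\varphi(\mu_k)-\alpha\}_{k\in\Z}$, whose displacement from $\Z$ is $a\,(\varphi(\mu_k)-\mu_k)$; under $U$ a pointwise displacement bound $\tfrac1{4a}$ becomes the Kadec bound $\tfrac14$, and the block-averaged bound \eqref{eqn:avdoninmap} becomes the $\tfrac14$-in-the-mean condition for $\{a\varphi(\mu_k)-\alpha\}$ over the image blocks. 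The step I expect to require the most care, and the only genuine obstacle, is exactly this bookkeeping: one must check that the averaging in \eqref{eqn:avdoninmap} is normalized the way Avdonin's theorem requires (by the number of lattice points per block), that $U$ maps a block of consecutive points of $\tfrac{\Z+\alpha}{a}$ to a block of consecutive integers of the right size, and --- if the blocks in \eqref{eqn:avdoninmap} are read as the lattice points lying in a fixed real interval $[pR,(p+1)R)$ --- that the bounded fluctuation in how many points such an interval catches is absorbed, for $R$ large, by the strict inequality (using boundedness of the displacement as in Remark~\ref{rem:avdoninmap} when available, and otherwise merging adjacent blocks into blocks of a common size). Once these routine normalizations are in place the conclusion is immediate from \cite{A74}.
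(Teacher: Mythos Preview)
Your proposal is correct and in fact goes further than the paper: the paper does not prove Theorem~\ref{thm:rieszbasis} at all but simply attributes it to Avdonin~\cite{A74}, treating it as a restatement of his result in the language of Definition~\ref{def:avdoninmap}. Your reduction by translation and the unitary rescaling $U$ to the classical $L^2[0,1]/\Z$ form is exactly the routine transcription the paper leaves implicit.
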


Theorem~\ref{thm:rieszbasis} and its more general versions exist within
the context of a vast literature on the subject of exponential systems and 
their completeness and basis properties (for example, \cite{Beu86, 
BeuMal67, HruNikPav81, Lan67, Lev64, Lev96, Pav79}).

\subsection{Weyl-Khinchin equidistribution}

The Weyl equidistribution theorem asserts that the integer multiples of
 $\alpha$ irrational are equally distributed on the torus, that is,
\begin{align*}
\lim_{R\to\infty} \ \frac 1 R \ \sum_{k=1}^{R} \ k  \alpha \!\! \mod 1 \
 =\frac 1 2.
\end{align*}
Khinchin extended the result to  affine lattices; we shall employ the following customized generalization. 

\begin{theorem}\label{thm:WK}
For $a$ irrational and $\epsilon>0$ exists
$R_0$  so that for all $R\in\Z$ with $R>R_0$ and $m\in\Z$,
\begin{align*}
\Big|\frac{1}{R} \sum_{k=mR}^{(m+1)R-1}  &\frac {k+\frac 1 2} a \!\! \mod 1 \ \
 - \frac 1 2\Big| <\epsilon.
\end{align*}
\end{theorem}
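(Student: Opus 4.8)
The plan is to reduce the statement to the classical Weyl equidistribution theorem applied to the sequence $(k/a \bmod 1)_{k\in\Z}$, carefully tracking uniformity in the block index $m$. Write $a$ irrational, so $1/a$ is irrational, and recall Weyl's theorem: for any $\epsilon>0$ and any interval $[c,d)\subseteq[0,1)$, the proportion of $k\in\{0,1,\dots,R-1\}$ with $k/a\bmod 1\in[c,d)$ converges to $d-c$ as $R\to\infty$; equivalently, the Cesàro averages $\frac1R\sum_{k=0}^{R-1} f(k/a\bmod 1)$ converge to $\int_0^1 f$ for every Riemann-integrable $f$ on the torus. The key point I would stress first is that the average in the statement, $\frac1R\sum_{k=mR}^{(m+1)R-1} \big(\tfrac{k+1/2}{a}\bmod 1\big)$, equals $\frac1R\sum_{j=0}^{R-1} g\big(mR/a + (j+\tfrac12)/a \bmod 1\big)$ where $g(x)=x$ on $[0,1)$; this is the orbit of the single point $\beta_m := (mR + 1/2)/a \bmod 1$ under the irrational rotation by $1/a$, run for $R$ steps. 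So what must be controlled is the equidistribution of an $R$-step orbit of the rotation starting from an \emph{arbitrary} point $\beta_m$, uniformly over all starting points — not just over the fixed start $0$.

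The second step is therefore to invoke the uniform (over starting point) form of Weyl's theorem. The cleanest route is Weyl's criterion via exponential sums: for the function $g(x)=x$, which has absolutely convergent Fourier series $\sum_{n\neq 0} c_n e^{2\pi i n x} + \tfrac12$ with $c_n = -\tfrac1{2\pi i n}$, one has $\frac1R\sum_{j=0}^{R-1} g(\beta + j/a\bmod 1) - \tfrac12 = \sum_{n\neq 0} c_n e^{2\pi i n\beta}\cdot \frac1R\sum_{j=0}^{R-1} e^{2\pi i n j/a}$. Each inner geometric sum is bounded in absolute value by $\min\big(R,\; \frac{1}{|\sin(\pi n/a)|}\big)$, independently of $\beta$; dividing by $R$ and using $|c_n|=\tfrac1{2\pi|n|}$ gives a bound on the tail $\sum_{|n|>N}$ that is $O(1/N)$ uniformly in $\beta$ and $R$, while the finite head $\sum_{0<|n|\le N}$ is bounded by $\frac{1}{R}\sum_{0<|n|\le N}\frac{|c_n|}{|\sin(\pi n/a)|}$, which for fixed $N$ tends to $0$ as $R\to\infty$ uniformly in $\beta$. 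Choosing first $N$ large enough to make the tail $<\epsilon/2$ and then $R_0$ large enough (depending only on $N$, $a$, $\epsilon$) to make the head $<\epsilon/2$ for all $R>R_0$, we get the claimed estimate uniformly in the starting phase $\beta$, hence uniformly in $m$ (taking $\beta=\beta_m$). The shift by $1/2$ inside the numerator only changes $\beta_m$ and is harmless.

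The main obstacle is precisely the uniformity over $m$: a naïve appeal to ``Weyl's theorem applied to each block'' would give, for each fixed $m$, an $R_0=R_0(m)$, which is useless here. The exponential-sum argument above circumvents this because the geometric sum bound $\big|\frac1R\sum_{j<R} e^{2\pi i n j/a}\big|\le \frac{1}{R|\sin(\pi n/a)|}$ has no dependence on the phase $\beta_m$ at all — the phase factors out as $e^{2\pi i n\beta_m}$ of modulus $1$. One mild technical point to handle is that $g(x)=x$ is discontinuous at $0$ on the torus, so its Fourier series converges only conditionally at that point; but since we only ever evaluate finite averages and then pass to the limit in $R$, and the partial-sum/tail splitting above uses only $\ell^1$-type bounds on $\{c_n\}_{|n|>N}$ combined against uniformly bounded exponential sums — or alternatively one sandwiches $g$ between continuous functions agreeing with it off a small neighborhood of $0$ — this causes no real difficulty. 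I would present the exponential-sum version as the clean argument and relegate the sandwiching alternative to a remark.
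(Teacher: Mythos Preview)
Your identification of the central difficulty---uniformity in the block index $m$, equivalently in the starting phase $\beta_m$---is exactly right, and your observation that the phase $e^{2\pi i n\beta_m}$ factors out of the geometric sum with modulus $1$ is the key mechanism. This is also how the paper proceeds.

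However, your primary argument has a genuine gap in the tail estimate. You claim that $\sum_{|n|>N} |c_n|\cdot\big|\frac{1}{R}\sum_{j<R} e^{2\pi i nj/a}\big|$ is $O(1/N)$ uniformly in $R$, but with $|c_n|=\frac{1}{2\pi|n|}$ the coefficients of the sawtooth are not in $\ell^1$: bounding the normalized exponential sum by $1$ yields the divergent harmonic tail, while the alternative bound $\frac{1}{R|\sin(\pi n/a)|}$ is not summable against $1/|n|$ either (for generic $a$ the small divisors $|\sin(\pi n/a)|$ can be as small as $c/|n|$, and even in the best Diophantine case you get $\frac{1}{R}\sum_{|n|>N} 1$). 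So the displayed identity for $g(x)=x$ cannot be used term-by-term in the way you describe; some smoothing or truncation is unavoidable.

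The fix you mention parenthetically---sandwiching $g$ between continuous functions that agree with it off a small neighborhood of the jump---is not a cosmetic remark but the actual argument, and it is precisely the route the paper takes. The paper proves the more general statement (their Theorem~\ref{thm:WKgeneral}, for arbitrary piecewise continuous $f$) by first handling pure exponentials $e^{2\pi i\ell x}$ via the geometric-sum bound with the phase factored out (your observation), passing to trigonometric polynomials by linearity, then to continuous $f$ by Stone--Weierstra{\ss}, and finally to piecewise continuous $f$ by the sandwich. I would reorganize your write-up along those lines: make the approximation of $g(x)=x$ by a trigonometric polynomial (via a continuous approximant) the main step, and drop the direct Fourier-series manipulation, which cannot be made rigorous as stated.
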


A self-contained proof of a slight generalization of Theorem~\ref{thm:WK} is included in the appendix.

\subsection{Beatty-Fraenkel sequences}

In 1926, Beatty asked the readers of the American Mathematical Monthly to show that for positive irrational numbers $\alpha$ and $\beta$ with
$$\frac{1}{\alpha} + \frac{1}{\beta} = 1,$$
the sequences $\{\lfloor{n\alpha}\rfloor\}_{n\in\N}$ and $\{\lfloor{n\beta}\rfloor\}_{n\in\N}$ form a partition
of $\N$ \cite{B26}, \cite{BOHA27}.  Fraenkel  generalized this result 
and characterized for all $a\in\R$ all parameters $\gamma,\delta$ with the property that
$ \Big\lfloor \frac {\Z + \gamma}a \Big\rfloor$  and $ \Big\lfloor \frac {\Z + \delta}{1-a} \Big\rfloor$ partition $\Z$ \cite{F69}. Note that  the iterative procedure carried out in Section~\ref{sec:split} requires the choice of $\gamma=\frac 1 2 = \delta$.

\begin{theorem}  \label{thm:BF}
    The sequences $\Big\lfloor{\al{a}}\Big\rfloor  $ and $\Big\lfloor{\al{1-a}}\Big\rfloor $ partition $\Z$ for $0<a<1$ irrational.
  \end{theorem}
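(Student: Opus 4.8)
The plan is to prove the partition statement for the classical Beatty case first and then transfer it to the shifted sequences $\lfloor \frac{\Z+1/2}{a}\rfloor$ by a direct comparison. Set $\alpha = 1/a$ and $\beta = 1/(1-a)$; since $0<a<1$ is irrational, both $\alpha,\beta$ are irrational and greater than $1$, and $\frac1\alpha+\frac1\beta = a+(1-a)=1$. Beatty's theorem (which I may invoke, but whose short proof I would recall) states that $\{\lfloor n\alpha\rfloor : n\geq 1\}$ and $\{\lfloor n\beta\rfloor : n\geq 1\}$ partition $\N$. The standard argument: a positive integer $m$ is hit by the $\alpha$-sequence iff $\lfloor m/\alpha\rfloor - \lfloor (m+1)/\alpha\rfloor$ jumps, equivalently iff there is an $n$ with $n\alpha \le m < (n+1)\alpha$, i.e. $m/\alpha - 1 < n \le m/\alpha$; counting, the number of $\alpha$-multiples in $[1,m]$ is $\lfloor m/\alpha\rfloor$ and the number of $\beta$-multiples is $\lfloor m/\beta\rfloor$, and irrationality gives $\lfloor m/\alpha\rfloor + \lfloor m/\beta\rfloor = m/\alpha + m/\beta - \{m/\alpha\} - \{m/\beta\} = m - 1$ since the two fractional parts are nonzero and sum to an integer, hence to $1$. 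This count being exactly $m-1$ for every $m$, together with the strict monotonicity of each sequence, forces the two ranges to be disjoint and to cover $\N$.

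Next I would handle the shift. The key observation is the identity $\lfloor \frac{n+\frac12}{a}\rfloor = \lfloor (2n+1)\cdot\frac{1}{2a}\rfloor$, so the sequence $\lfloor\frac{\Z+1/2}{a}\rfloor$ is exactly the odd-indexed Beatty sequence for the irrational $\alpha' = 1/(2a)$, and similarly $\lfloor\frac{\Z+1/2}{1-a}\rfloor$ is the odd-indexed Beatty sequence for $\beta' = 1/(2(1-a))$. Here $\frac{1}{\alpha'}+\frac{1}{\beta'} = 2a + 2(1-a) = 2$, so $\{\lfloor n\alpha'\rfloor\}_{n\geq 1}$ and $\{\lfloor n\beta'\rfloor\}_{n\geq 1}$ do \emph{not} partition $\N$ in the Beatty sense; instead the natural generalization (Fraenkel's, or a direct counting argument in the spirit of the one above) shows that with $\frac1{\alpha'}+\frac1{\beta'}=2$ the sequences double-cover $\N$ with multiplicity $2$, and — this is the crux — the odd-indexed terms of the $\alpha'$-sequence together with the odd-indexed terms of the $\beta'$-sequence partition $\N$ exactly once. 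I would prove this by the same fractional-part bookkeeping: for $m\ge 1$, count pairs $(n,\epsilon)$ with $n\geq 1$ odd and $\lfloor n\alpha'\rfloor = m$ or $\lfloor n\beta'\rfloor = m$; using that consecutive odd multiples of $\alpha'$ differ by $2\alpha' = 1/a > 1$ and of $\beta'$ by $2\beta' = 1/(1-a)$, each $m$ is hit at most once by each sequence, and a running count over $[1,m]$ (number of odd $n$ with $n\alpha' \le m$ is $\lceil \lfloor m/\alpha'\rfloor / 2 \rceil$ or similar, paired with the $\beta'$-count) again telescopes to exactly $m$ up to a bounded correction that the irrationality pins down. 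To cover negative integers as well (the theorem is stated over all of $\Z$), I would use the symmetry $\lfloor \frac{-n+\frac12}{a}\rfloor = -\lfloor\frac{(n-1)+\frac12}{a}\rfloor - 1$ together with the analogous identity for $1-a$, reducing the negative side to the positive side.

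The main obstacle is the shifted/doubled counting in the second paragraph: unlike the clean Beatty case where $\frac1\alpha+\frac1\beta=1$ makes two nonzero fractional parts sum to exactly $1$, here the relevant sum of fractional parts can be $0$, $1$, or $2$ worth of integer mass, and one must show the odd-index restriction exactly compensates. I expect the cleanest route is not ad hoc counting but invoking Fraenkel's characterization directly: Fraenkel \cite{F69} determines precisely which $\gamma,\delta$ make $\lfloor\frac{\Z+\gamma}{a}\rfloor$ and $\lfloor\frac{\Z+\delta}{1-a}\rfloor$ a partition of $\Z$, and the content of the theorem is simply that $\gamma = \delta = \frac12$ lies in the admissible set when $a$ is irrational. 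So the "proof" reduces to checking the hypotheses of Fraenkel's theorem for this specific symmetric choice of parameters — a verification of a non-degeneracy (irrationality/density) condition rather than a fresh combinatorial argument. I would present it that way, with the self-contained counting argument relegated to the appendix or sketched as the alternative, since the iterative construction in Section~\ref{sec:split} only ever needs this single instance $\gamma=\delta=\frac12$.
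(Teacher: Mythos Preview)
Your detour through the doubled parameters $\alpha'=1/(2a)$, $\beta'=1/(2(1-a))$ is what creates the obstacle you flag. You already have the right tool in your first paragraph: the counting argument for classical Beatty works \emph{verbatim} for the shifted sequences, with no reduction to the unshifted case needed. Specifically, for $N\ge 1$ the number of $k\in\Z$ with $\big\lfloor\frac{k+1/2}{a}\big\rfloor\in[0,N)$ is $F(a,N)=\lfloor aN+\tfrac12\rfloor$, and irrationality of $a$ gives the strict inequalities $aN-\tfrac12<F(a,N)<aN+\tfrac12$. Adding the analogous estimate for $1-a$ yields $N-1<F(a,N)+F(1-a,N)<N+1$, hence $F(a,N)+F(1-a,N)=N$ exactly --- and then the same induction on $N$ you describe for Beatty shows each nonnegative integer is covered exactly once. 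The point is that the two shifts $\tfrac12+\tfrac12=1$ contribute an integer, so the fractional-part bookkeeping is just as clean as in the $\gamma=\delta=0$ case; there is no ``$0$, $1$, or $2$ worth of integer mass'' ambiguity. This is precisely the paper's argument, and it is self-contained rather than an appeal to Fraenkel's classification.

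Your proposed fallback --- check that $\gamma=\delta=\tfrac12$ satisfies Fraenkel's hypotheses --- is logically fine but is exactly what the paper says it is \emph{not} doing (``a short proof, different from Fraenkel's approach''). And your odd-index-of-a-doubled-Beatty route, while not wrong in principle, never gets past a sketch; the ``$\lceil\lfloor m/\alpha'\rfloor/2\rceil$ or similar'' step would need real work to pin down, whereas the direct count above is two lines.
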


We provide a short proof, different from Fraenkel's approach, in Section~\ref{appendix:BF} in the appendix. 

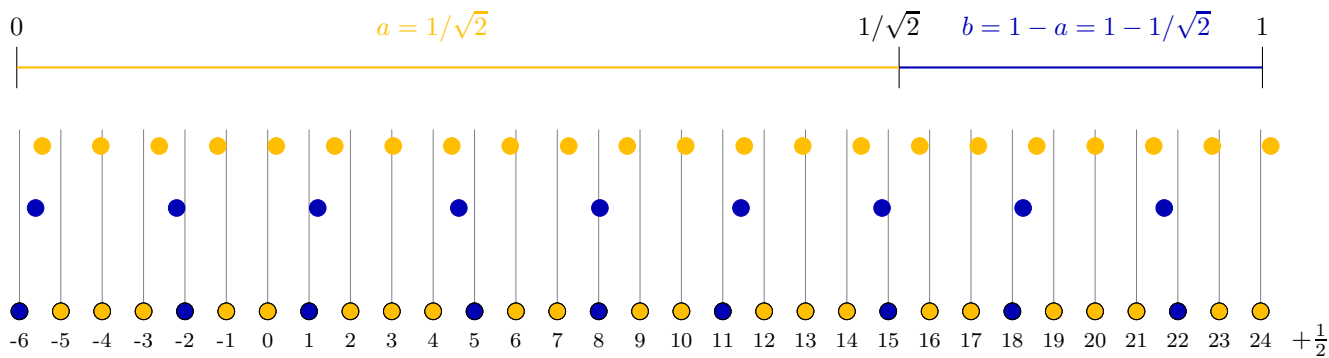
\begin{figure}[!h]

 \begin{tikzpicture}[scale=1.38]

\begin{scope}
\draw[-] (0,10) -- (12,10);
\draw [-] (0,9.8) -- (0,10.2);
\node at (0,10.4) {$0$};
\node[yellow] at (4,10.4) {$a=1/\sqrt{2}$};
\draw [-] (8.5,9.8) -- (8.5,10.2);
\node at (8.4,10.4) {$1/\sqrt{2}$};
\node[blue] at (10.3,10.4) {$b=1-a=1-1/\sqrt{2}$};
\draw [-] (12,9.8) -- (12,10.2);
\node at (12,10.4) {$1$};

\pause

  \draw[yellow,thick] [-] (0,10) -- (8.5,10);

\draw[blue,thick] [-] (8.5,10) -- (12,10);

\end{scope}
\end{tikzpicture}
\vspace{.5cm}

 \begin{tikzpicture}[scale=.55]

\foreach \x in {-6,-5,-4,-3,-2,-1,0,1,2,3,4,
      5,6,7,8,9,10,11,12,13,14,15,16,17,18,19,20,21,22,23,24}
 {
        \draw[gray,style=help lines] (\x,-6.5) -- (\x,-2.1);
        \draw[black,fill=white] (\x,-6.5) circle (.2cm);
        \node at (\x,-7.2) {\footnotesize \x};
      }
      \node at (25.2,-7.2) {$+\tfrac 1 2$};

\foreach \x in {-4*1.414,-3*1.414,-2*1.414,-1*1.414,0,
      1.414,2*1.414,3*1.414,4*1.414,5*1.414,6*1.414,7*1.414,8*1.414,9*1.414,10*1.414,11*1.414,12*1.414,13*1.414,14*1.414,15*1.414,16*1.414,17*1.414}
      {
        \draw[yellow,fill=yellow] (\x+0.707-.5,-2.5) circle (.2cm);
      }

      \foreach \x in {-2*3.41+1.71,-1*3.41+1.71,
      0+1.71,1*3.41+1.71,2*3.41+1.71,3*3.41+1.71,
      4*3.41+1.71,5*3.41+1.71,6*3.41+1.71}
      {
        \draw[blue,fill=blue] (\x-.5,-4) circle (.2cm);
      }


 \foreach \x in {-6,-5,-4,-3,-2,-1,0,1,2,3,4,
      5,6,7,8,9,10,11,12,13,14,15,16,17,18,19,20,21,22,23,24}
 {
        \draw[black,fill=white] (\x,-6.5) circle (.2cm);
      }

      \foreach \x in {-5,-4,-3,-1,0,2,3,4,
      6,7,9,10,12,13,14,16,17,19,20,21,23,24}
 {
        \draw[black,fill=yellow] (\x,-6.5) circle (.2cm);
      }

      \foreach \x in {-6,-2,1,
      5,8,11,15,18,22}
 {
        \draw[black,fill=blue] (\x,-6.5) circle (.2cm);
      }

  \end{tikzpicture}

  \caption{Dividing the interval $[0,1]$ at $a=1/ {\sqrt{2}}$ and constructing Riesz bases for  $L^2[0,a]$ and $L^2[a,1]$ by rounding the yellow lattice
  $\frac {\Z+\frac 1 2 } {a}$  and the blue lattice $\frac {\Z+\frac 1 2} {1-a}$ to the lattice $\Z + \frac 1 2$. As guaranteed by Beatty-Fraenkel, the blue and yellow images partition $\Z + \frac 1 2$.}
\end{figure}

\subsection{Avdonin, Beatty-Fraenkel, and Weyl-Khinchin in concert}\label{sec:combiningABW}

The strategy of proof for Theorems~\ref{thm:main2} and~\ref{thm:maincountable} is to achieve a splitting of $[0,1]$ into $n$ intervals and $\Z$ into corresponding $n$ disjoint subsets, by splitting off one interval at a time.  The separation into two intervals is analyzed in detail in Section~\ref{sec:twointervals}, the iterative process in  Section~\ref{sec:split}.

Note that   different arguments are needed based on whether the lengths of the two intervals are rationally related or not. For illustration,  we consider the case where we split $[0,1]$ into the two intervals $[0,a]$ and $[a,1]$ with $\frac{1-a}a=\frac 1 a -1$ irrational and consequently $a$ and $1-a$ irrational.

Weyl-Khinchin, that is, Theorem~\ref{thm:WK}, implies that for  $a$ irrational and $\epsilon>0$, there exists
$R$  so that for all $m\in\Z$,
\begin{align}\label{eqn:WK}
\epsilon &> \Big|\frac{1}{R} \sum_{k=mR}^{(m+1)R-1}  \frac {k+\frac 1 2} a \!\! \mod 1 \ \
 - \frac 1 2\Big|  
 = \Big|\frac{1}{R} \sum_{k=mR}^{(m+1)R-1}
 \frac {k+\frac 1 2} a - \Big(\Big\lfloor \frac {k+\frac 1 2} a\Big\rfloor + \frac 1 2 \Big)\Big|\\
 &= \frac 1 a \Big|\frac{a}{R} \sum_{\frac {k+\frac 1 2} a\in 
 \left[\frac {mR+\frac 1 2 }{a},  \frac {(m+1)R+\frac 1 2 }{a}\right)} 
 \frac {k+\frac 1 2} a - \Big(\Big\lfloor \frac {k+\frac 1 2} a\Big\rfloor + \frac 1 2 \Big)\Big|.
 \end{align}
That is, the map 
\begin{align*}
\varphi_a: \frac {\Z+\frac 1 2}{a} \to {\Z+\frac 1 2}, \quad  \frac {k+\frac 1 2} a
\mapsto \Big\lfloor \frac {k+\frac 1 2} a\Big\rfloor + \frac 1 2 
\end{align*}
is an $a\epsilon$-Avdonin map for all $\epsilon>0$, that is, $\varphi_a$ is an $\overrightarrow 0$-Avdonin map. Replacing $a$ with $1-a$, we observe similarly that 

$\varphi_{1-a}$ is an $\overrightarrow 0$-Avdonin map from $\frac {\Z+\frac 1 2}{1-a}$ to $\Z+\frac 1 2$ for all $\epsilon>0$ as well, so that choosing $\epsilon<\min\Big\{\frac 1 {4a^2}, \frac 1 {4a(1-a)}\Big\}$, we can conlude that $\mathcal E(\lfloor \al{a}\rfloor +\frac 1 2 )$ is a Riesz basis for $L^2[0,a]$ and $\mathcal E(\lfloor \al{1-a}\rfloor+ \frac 1 2) $ is a Riesz basis for $L^2[a,1]$.  Theorem~\ref{thm:BF} now implies that the range of $\varphi_a$ and $\varphi_{1-a}$, 
that is, the sets  $\Big\lfloor \frac {\Z+\frac 1 2} {a}\Big\rfloor+ \frac 1 2$ and $\Big\lfloor \frac {\Z+\frac 1 2} {1-a}\Big\rfloor+ \frac 1 2$ partition $\Z+\frac 1 2$, and, hence, the sets $\Big\lfloor \frac {\Z+\frac 1 2} {a}\Big\rfloor $ and $\Big\lfloor \frac {\Z+\frac 1 2} {1-a}\Big\rfloor $ partition $\Z$.

\section{Splitting an interval into two}\label{sec:twointervals}

We begin the treatment of the problem at hand by  proving a version of Theorem~\ref{thm:main1} for $n=2$.

\begin{theorem}\label{thm:rounding}
For $0<c<d$ set \begin{align}\label{eqn:roundingfunction}
\varphi_c^{d}: \frac {\Z+\frac 1 2}{c} \to \frac {\Z+\frac 1 2} {d}, \quad  \frac {k+\frac 1 2} c
\mapsto \frac {\Big\lfloor  \frac {k+\frac 1 2} {\frac c{d}}\Big\rfloor + \frac 1 2}{d}.
\end{align}
For $a,b\in \R$ with $\frac a b$ irrational, or $\frac a b$ rational and $\frac{a}{a+b}=\frac{N_0}{K_0}$ in lowest terms with $K_0$ odd
set $\varphi=\varphi_a^{a+b}$ and $\psi=\varphi_b^{a+b}$, and in case of
$\frac{a}{a+b}=\frac{N_0}{K_0}$ in lowest terms with $K_0$ even set 
$\displaystyle \varphi=\varphi_a^{a+b} - \frac 1 {a+b}\chi_{\frac{K_0(2\Z-\frac 1 2)}{a+b}}$ and 
$\psi=\varphi_b^{a+b}-  \frac 1 {a+b}\chi_{\frac{K_0(2\Z+\frac 1 2)}{a+b}}$. Then 
\begin{itemize}
\item[{\rm (a)}]
$\Big\{\varphi\Big(\al{a}\Big),\,
       \psi\Big(\al{b}\Big)\Big\}$
forms a partition of $\al{a + b}$, and
\item[{\rm (b)}] for all $\epsilon>0$, the mappings $\varphi$ and $\psi$
are $\epsilon$-Avdonin maps for the lattices $\al{a}$ and $\al{b}$, respectively, and $0$-Avdonin maps if $\frac a b $ is rational.

\end{itemize}
\end{theorem}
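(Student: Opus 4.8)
\emph{Proof strategy.} All objects in the statement are homogeneous of degree $-1$ under $(a,b)\mapsto(\lambda a,\lambda b)$: indeed $\varphi_{\lambda a}^{\lambda(a+b)}=\tfrac1\lambda\,\varphi_a^{a+b}(\lambda\,\cdot)$, the lattices and correction sets rescale by $\tfrac1\lambda$, and the Avdonin defect in \eqref{eqn:avdoninmap} is unchanged provided the window $R$ is scaled by $\lambda$. So I would first reduce to $a+b=1$, where $\varphi\big(\tfrac{k+1/2}{a}\big)=\lfloor\tfrac{k+1/2}{a}\rfloor+\tfrac12$ and $\psi\big(\tfrac{k+1/2}{b}\big)=\lfloor\tfrac{k+1/2}{b}\rfloor+\tfrac12$, except that when $a=N_0/K_0$ in lowest terms with $K_0$ even (so $N_0$ and $K_0-N_0$ are both odd) these values are further decreased by $1$ on $K_0(2\Z-\tfrac12)$, resp.\ $K_0(2\Z+\tfrac12)$. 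This leaves three cases: (i) $a$ irrational; (ii) $a=N_0/K_0$ with $K_0$ odd; (iii) $a=N_0/K_0$ with $K_0$ even. Part (a) I would get from a preimage count, part (b) from equidistribution in case (i) and from exact periodicity in cases (ii),(iii).

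For part (a), write $C_a(m)=\#\{k:\lfloor\tfrac{k+1/2}{a}\rfloor=m\}=\#\big(\Z\cap[ma-\tfrac12,(m+1)a-\tfrac12)\big)\in\{0,1\}$ and $C_b(m)$ analogously; in cases (i),(ii) this is exactly the number of $k$ with $\varphi\big(\tfrac{k+1/2}{a}\big)=m+\tfrac12$, so it suffices to show $C_a(m)+C_b(m)=1$ for all $m$. There the endpoints $ma-\tfrac12$, $mb-\tfrac12$ are never integers (in case (ii) because $\tfrac{2mN_0-K_0}{2K_0}$ has odd numerator and even denominator), hence $C_a(m)=\lfloor(m+1)a-\tfrac12\rfloor-\lfloor ma-\tfrac12\rfloor$; adding the $b$-analogue and using $\lfloor x\rfloor+\lfloor y\rfloor=x+y-1$ for $x,y\notin\Z$ with $x+y\in\Z$ (once with $x+y=m$, once with $x+y=m-1$) yields $C_a(m)+C_b(m)=1$, which is part (a) and shows $\varphi,\psi$ injective (this also re-proves Theorem~\ref{thm:BF}). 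In case (iii) the endpoints become integral precisely on one residue class $m\equiv m_0\pmod{K_0}$ (where $m_0\equiv N_0^{-1}(K_0/2)$; the same class serves for $b$, and $m_0\equiv K_0/2\pmod{K_0}$ because $N_0$ is odd), and a comparison with the floor-difference formula shows $C_a(m)+C_b(m)$ equals $1$ except that it is $2$ on $m\equiv m_0$ and $0$ on $m\equiv m_0-1$. Expressing $\varphi\big(\tfrac{k+1/2}{a}\big)=m+\tfrac12$ through $\lfloor\tfrac{k+1/2}{a}\rfloor\in\{m,m+1\}$ together with membership of $\tfrac{k+1/2}{a}$ in $K_0(2\Z-\tfrac12)$, the preimage count of $m+\tfrac12$ under $\varphi$ becomes $C_a(m)-D_a(m)+D_a(m+1)$, with $D_a(m)=\#\{k:\lfloor\tfrac{k+1/2}{a}\rfloor=m,\ \tfrac{k+1/2}{a}\in K_0(2\Z-\tfrac12)\}$, and analogously for $\psi$. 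A short modular computation shows $D_a$ is the indicator of $m\equiv 3K_0/2\pmod{2K_0}$ and $D_b$ the indicator of $m\equiv K_0/2\pmod{2K_0}$, so $D_a+D_b$ is the indicator of $m\equiv m_0\pmod{K_0}$; feeding this into $\big(C_a{+}C_b\big)(m)-\big(D_a{+}D_b\big)(m)+\big(D_a{+}D_b\big)(m+1)$ the two telescoping pieces annihilate the surplus at $m\equiv m_0$ and the deficit at $m\equiv m_0-1$, giving the constant $1$; so the ranges of $\varphi,\psi$ partition $\al{a+b}$ in case (iii) too.

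For part (b), the range of $\varphi$ lies in $\al{a+b}$ (hence is $\tfrac1{a+b}$-separated) and $\varphi,\psi$ are injective by part (a), so Definition~\ref{def:avdoninmap} applies. In case (i) the argument of \S\ref{sec:combiningABW} applies verbatim: the chain \eqref{eqn:WK}, driven by Weyl--Khinchin (Theorem~\ref{thm:WK}), shows $\varphi$ and $\psi$ are $\overrightarrow{0}$-Avdonin maps, i.e.\ $\epsilon$-Avdonin for every $\epsilon>0$ (and $\sup|\varphi(y)-y|\le\tfrac12$, so Remark~\ref{rem:avdoninmap} upgrades this to all large windows). In cases (ii),(iii) I would instead exhibit an exact window: the deviation $g(y):=\varphi(y)-y$ equals $\tfrac12-\{y\}$ (fractional part) off the correction set and $-\tfrac12$ on it, and since $\{y\}$ depends only on $y\bmod1$ and the correction set is $2K_0$-periodic, $g$ is periodic along $\al{a}$ with period $K_0$ in case (ii) and $2K_0$ in case (iii). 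Taking $R=K_0$, resp.\ $R=2K_0$, no lattice point is a multiple of $R$, so each window $[pR,(p+1)R)$ contains exactly one period and the Avdonin average equals $\tfrac1R\sum_{y\in[0,R)\cap\al{a}}g(y)$; a first-moment computation ($\{y\}$ runs through an arithmetic progression over one period, since $K_0$, resp.\ $K_0/2$, is invertible modulo the relevant modulus) gives $\sum(\tfrac12-\{y\})=0$ in case (ii) and $\sum(\tfrac12-\{y\})=1$ in case (iii), the latter cancelled by the single period-point lying in the correction set. Hence in the rational cases $\varphi,\psi$ are $0$-Avdonin maps, in particular $\epsilon$-Avdonin for all $\epsilon>0$.

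The step I expect to be the main obstacle is case (iii) of part (a): verifying that the two hand-inserted indicator corrections jointly repair the partition defect of the naive rounding. Everything else reduces to a citation (Beatty--Fraenkel), to the Weyl--Khinchin computation already displayed in \S\ref{sec:combiningABW}, or to a routine periodicity/first-moment calculation; the genuine work in case (iii) is the bookkeeping that locates the excess and deficit residue classes of $C_a+C_b$ modulo $K_0$ and confirms that the telescoping difference of $D_a+D_b$ cancels them exactly.
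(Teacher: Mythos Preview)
Your proposal is correct, and its scaffolding matches the paper's: reduce to $a+b=1$, split into the three cases (irrational; $K_0$ odd; $K_0$ even), quote Weyl--Khinchin via \eqref{eqn:WK} for case~(i), and exploit exact periodicity for the rational cases.  The implementations differ in two places.

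For part~(a) the paper proceeds through Lemma~\ref{lem:localpartition}, which detects collisions $\big\lfloor\frac{k+1/2}{a}\big\rfloor=\big\lfloor\frac{j+1/2}{b}\big\rfloor$ by writing $K_0(2k+1)=2N_0 q+r$, $K_0(2j+1)=2(K_0-N_0)q+s$ and bounding $r+s$; in the even case this pins the unique collision at $K_0/2$ and the unique uncovered point at $K_0/2-1$.  Your floor--difference count $C_a(m)+C_b(m)=1$ via $\lfloor x\rfloor+\lfloor y\rfloor=x+y-1$ is a different (and arguably cleaner) route, and your telescoping $(C_a{+}C_b)(m)-(D_a{+}D_b)(m)+(D_a{+}D_b)(m+1)$ handles case~(iii) more uniformly than the paper's explicit case split in \eqref{eqn:definerationalvarphi}--\eqref{eqn:definerationalpsi}.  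For part~(b) in the rational cases the paper pairs $k\leftrightarrow N_0-1-k$ to obtain $r_k+r_{N_0-1-k}=2N_0$ and hence $\sum\big(\lfloor\cdot\rfloor-\cdot\big)=-N_0/2$ over a period; your argument that the fractional parts exhaust a full residue system (odd residues mod $2N_0$ when $K_0$ is odd, even residues mod $2N_0$ twice when $K_0$ is even) is arithmetically equivalent and gives the same totals $0$ and $1$.

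One small point to tighten in case~(iii) of~(a): you verify only that the \emph{combined} preimage count under $\varphi$ and $\psi$ equals $1$.  To conclude that each map is separately injective, observe that $C_a(m)-D_a(m)+D_a(m+1)\in\{0,1\}$ always: $D_a(m)\le C_a(m)$, the supports of $D_a(\cdot)$ and $D_a(\cdot+1)$ are disjoint, and $D_a(m+1)=1$ forces $m\equiv m_0-1\pmod{K_0}$, hence $C_a(m)=0$.  The analogous check works for $\psi$.
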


Observe that the mapping $x\mapsto \lfloor{x}\rfloor +\frac12$ amounts to rounding $x$ to the element in $\Z+\frac12$ nearest to it,
where the ambiguity in case $x\in\Z$ is resolved by choosing the larger of the two choices, thereby creating a bias towards larger values. 

More generally for $\beta>0$, the mapping
$x\mapsto \frac{\lfloor{\beta x}\rfloor + \frac12}{\beta}$ maps $x$ to the element of $\frac{\Z+\frac12}{\beta}$ nearest to it. Applied to a lattice $\al{a}$, the unbalance created by the bias only appears if $\beta\frac {k+\frac 1 2}a\in\Z$ which is only possible if $\frac \beta a=\frac{K_0}{N_0}$ in lowest terms with $K_0$ even. This bias is correctd for by the adjustments to the rounding maps in Theorem~\ref{thm:rounding}.

Note that the Avdonin maps required in 
Theorem~\ref{thm:rounding} can be constructed locally.  These considerations are summarized in the following lemma.

\begin{lemma}\label{lem:localpartition}
Given $a,\,b>0$  there exists $K\in\N$ arbitrarily large such that for every $p\in\Z$ and $I=\big[\frac{pK}{a+b},\frac{(p+1)K}{a+b}\big)$,
\begin{equation}\label{eqn:localpartition}
\#\bigg(\frac{\Z+{\frac 1 2}}{a+b}\cap I\bigg) = \#\bigg(\frac{\Z+{\frac 1 2}}{a}\cap I\bigg) + \#\bigg(\frac{\Z+{\frac 1 2}}{b}\cap I\bigg)
\end{equation}
and the rounding map
\begin{align*}
  R: \bigg( \frac{\Z+\frac 1 2}{a} \cup \frac{\Z+{\frac 1 2}}{b} \bigg)\cap I\longrightarrow  \frac{\Z+{\frac 1 2}}{a+b} \cap I, \quad x \mapsto \frac{\lfloor (a+b) x\rfloor +\frac 1 2 }{a+b}
\end{align*} is well defined. In addition, it holds:
\begin{itemize}
\item[{\rm (a)}]  If $a/b$ is irrational we have $ \frac{\Z+{\frac 1 2}}{a} \cap  \frac{\Z+{\frac 1 2}}{b} =\emptyset$ and for any choice of $K\in \N$, 
(\ref{eqn:localpartition}) holds and the map $R$ is a bijection.

\item[{\rm (b)}]  If $a/b$ is rational with $\frac a {a+b}=\frac{N_0}{K_0}$ in lowest terms with $K_0$ odd, 
$ \frac{\Z+{\frac 1 2}}{a} \cap  \frac{\Z+{\frac 1 2}}{b} =\emptyset$ and for $K$ any integer multiple of $K_0$, (\ref{eqn:localpartition}) holds and $R$ is a bijection.

\item[{\rm (c)}] If $a/b$ is rational with $\frac a {a+b}=\frac{N_0}{K_0}$ in lowest terms with $K_0$ even, we have 
\begin{align*}
  \frac{\Z+{\frac 1 2}}{a} \cap  \frac{\Z+{\frac 1 2}}{b} = 
 \frac{K_0(\Z+\frac 1 2)}{a+b}=\frac{N_0(\Z+\frac 1 2)}{a}=\frac{(K_0-N_0)(\Z+\frac 1 2)}{b} 
\end{align*}
and for $K$ any integer multiple of $2K_0$, (\ref{eqn:localpartition}) holds and $R$ restricts to a bijection
$$\bigg( \bigg( \frac{\Z+{\frac 1 2}}{a} \setminus \frac{K_0(\Z+\frac 1 2)}{a+b}\bigg) \cup \frac{\Z+{\frac 1 2}}{b} \bigg)\cap I\quad \longrightarrow \quad \bigg( \frac{\Z+{\frac 1 2}}{a+b} \cap I\bigg) \setminus\frac{K_0(\Z+\frac 1 2)-\frac 1 2}{a+b}.$$
and acts as the identity on the additional points $\frac{N_0(\Z+\frac 1 2)}{a}=\frac{K_0(\Z+\frac 1 2)}{a+b}$. 
\end{itemize}
\end{lemma}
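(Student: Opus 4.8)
The plan is to normalise to a single scale and then count lattice points in intervals by hand. Dividing every quantity by $a+b$ and setting $\rho=\tfrac a{a+b}\in(0,1)$, the three lattices $\tfrac{\Z+1/2}{a},\tfrac{\Z+1/2}{b},\tfrac{\Z+1/2}{a+b}$ become $\tfrac{\Z+1/2}{\rho},\tfrac{\Z+1/2}{1-\rho},\Z+\tfrac12$, the interval $I$ becomes $[pK,(p+1)K)$, and $R$ becomes the ordinary rounding $y\mapsto\lfloor y\rfloor+\tfrac12$. Two elementary facts drive everything: (i) for $\mu>0$ and reals $s<t$ one has $\#\big(\tfrac{\Z+1/2}{\mu}\cap[s,t)\big)=\lceil\mu t-\tfrac12\rceil-\lceil\mu s-\tfrac12\rceil$, since $\tfrac{k+1/2}{\mu}\in[s,t)$ iff $\mu s-\tfrac12\le k<\mu t-\tfrac12$; and (ii) for any real $\theta$, $\lceil\theta-\tfrac12\rceil-\lfloor\theta+\tfrac12\rfloor$ equals $0$ if $\theta\notin\Z+\tfrac12$ and $-1$ if $\theta\in\Z+\tfrac12$.

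Combining (i) with (ii) (rewriting the $(1-\rho)$-count via $\lceil(1-\rho)(j+1)-\tfrac12\rceil=(j+1)-\lfloor\rho(j+1)+\tfrac12\rfloor$) yields, for every $j\in\Z$,
\[ \#\Big(\big(\tfrac{\Z+1/2}{\rho}\sqcup\tfrac{\Z+1/2}{1-\rho}\big)\cap[j,j+1)\Big)=1-e(j+1)+e(j), \]
where $\sqcup$ retains multiplicities and $e(j)=1$ if $\rho j\in\Z+\tfrac12$ and $e(j)=0$ otherwise. Summing $j$ from $pK$ to $(p+1)K-1$ telescopes to $K-e((p+1)K)+e(pK)$; since the integer endpoints $pK,(p+1)K$ carry no half-integer we also get $\#\big((\Z+\tfrac12)\cap[pK,(p+1)K)\big)=K$. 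Hence (\ref{eqn:localpartition}) is equivalent to $e(pK)=e((p+1)K)$, while the bijectivity of $R$ is governed by how many points of $\tfrac{\Z+1/2}{\rho}\cup\tfrac{\Z+1/2}{1-\rho}$ lie in each unit block $[j,j+1)$. Note also that a common point of the two lattices must be an integer $j$ (a sum of two half-integers), and then $\rho j\in\Z+\tfrac12$; so the common points are precisely the elements of the ``collision locus'' $\{j\in\Z:\rho j\in\Z+\tfrac12\}$.

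Everything now reduces to describing this collision locus in the three cases. If $\rho$ is irrational the locus is empty, so $e\equiv0$: (\ref{eqn:localpartition}) holds for every $K$, the two lattices are disjoint, every block holds exactly one point of the union, and $R$ is an injection of a $K$-element set into the $K$-element set $(\Z+\tfrac12)\cap[pK,(p+1)K)$, hence a bijection -- this is (a). If $\rho=N_0/K_0$ in lowest terms with $K_0$ odd, then $\rho j\in\Z+\tfrac12$ would force $2N_0 j=K_0(2m+1)$, impossible by parity, so again $e\equiv0$ and the same argument gives (b), a fortiori for $K$ a multiple of $K_0$. The work is in (c), $K_0$ even (hence $N_0$ odd): a short divisibility computation shows $\rho j\in\Z+\tfrac12$ exactly when $j$ is an \emph{odd} multiple of $\tfrac{K_0}2$, i.e.\ $j\in\tfrac{K_0}2(2\Z+1)$. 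For $K$ a multiple of $2K_0$ the endpoints $pK,(p+1)K$ are \emph{even} multiples of $\tfrac{K_0}2$, so $e$ vanishes there and (\ref{eqn:localpartition}) holds; unscaling the collision locus and using $\tfrac{N_0}{a}=\tfrac{K_0}{a+b}=\tfrac{K_0-N_0}{b}$ identifies the common set as $\tfrac{\Z+1/2}{a}\cap\tfrac{\Z+1/2}{b}=\tfrac{K_0(\Z+1/2)}{a+b}=\tfrac{N_0(\Z+1/2)}{a}=\tfrac{(K_0-N_0)(\Z+1/2)}{b}$. Re-reading the block count $1-e(j+1)+e(j)$: a block with $e(j)=1$ has multiplicity-count $2$, but both copies are the single common integer $j$, so its set-count is $1$; a block with $e(j+1)=1$ and $j$ ordinary is empty; all other blocks have exactly one point; and two consecutive special $j$ cannot occur since they are $K_0\ge2$ apart. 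Thus each block contains at most one point of $\tfrac{\Z+1/2}{\rho}\cup\tfrac{\Z+1/2}{1-\rho}$, so $R$ is injective on the domain, and its image omits $\tfrac{K_0(\Z+1/2)-1/2}{a+b}$ precisely because the values $\lfloor\cdot\rfloor+\tfrac12$ would need to come from the empty blocks $[j-1,j)$ with $j$ special. A count shows the domain $\big((\tfrac{\Z+1/2}{a}\setminus\tfrac{K_0(\Z+1/2)}{a+b})\cup\tfrac{\Z+1/2}{b}\big)\cap I$ and the reduced target $(\tfrac{\Z+1/2}{a+b}\cap I)\setminus\tfrac{K_0(\Z+1/2)-1/2}{a+b}$ each have $K-\tfrac K{K_0}$ elements, so the injection is a bijection; a final direct evaluation of $R$ on the common set $\tfrac{N_0(\Z+1/2)}{a}$ (where $(a+b)x$ is an integer and the rounding convention applies) records its remaining action, which is what the corrected maps of Theorem~\ref{thm:rounding} will later disambiguate.

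The step I expect to be the main obstacle is case (c): pinning down the divisibility characterisation of the collision locus, verifying that the two ``copies'' of each common point collapse consistently under $R$ so that the set-count per block never exceeds one, and matching the deleted target points $\tfrac{K_0(\Z+1/2)-1/2}{a+b}$ with the empty blocks. Cases (a) and (b), and the well-definedness of $R$ in general (for $x\in I$, $\lfloor(a+b)x\rfloor\in\{pK,\dots,(p+1)K-1\}$), then fall out of the same block-counting identity.
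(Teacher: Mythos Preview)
Your proof is correct and takes a genuinely different route from the paper's.

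The paper normalises to $a+b=1$ as you do, but then treats the three cases by separate ad hoc arguments. For (a) it invokes the Beatty--Fraenkel theorem (Theorem~\ref{thm:BF}) as a black box to get the global bijection and reads off the local statement. For (b) and (c) it uses periodicity to reduce to one period $[0,K_0)$, then analyses the equation $\lfloor K_0(2k+1)/(2N_0)\rfloor=\lfloor K_0(2j+1)/(2(K_0-N_0))\rfloor$ by writing out quotient and remainder and deriving the constraint $2K_0(j+k+1-q)=r+s$ with $0\le r+s\le 2K_0-2$; the parity of $K_0$ then decides whether $r=s=0$ is possible.

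Your approach is more uniform: the single block identity $\#\big((\tfrac{\Z+1/2}{\rho}\sqcup\tfrac{\Z+1/2}{1-\rho})\cap[j,j+1)\big)=1-e(j+1)+e(j)$, obtained from the ceiling/floor calculus, handles all three cases at once; (\ref{eqn:localpartition}) becomes the telescoped condition $e(pK)=e((p+1)K)$, and both the disjointness and the bijectivity of $R$ are read off from the per-block count. In particular you do not need Beatty--Fraenkel at all---your block count reproves it in passing for the irrational case---and the collision locus $\{j:\rho j\in\Z+\tfrac12\}$ emerges naturally rather than from a case analysis of remainders. The paper's route is shorter to write down in each individual case, while yours makes the structure (why $K_0$ odd versus even matters, why multiples of $2K_0$ are needed in (c)) more transparent. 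One small caution: your phrase ``dividing every quantity by $a+b$'' describes dividing the parameters, which amounts to \emph{multiplying} spatial points by $a+b$; the paper avoids this by simply declaring $a+b=1$ without loss of generality.
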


\begin{proof}
Without loss of generality, assume that $a+b=1$, for if not, letting $a'=\frac{a}{a+b}$ and $b'=\frac{b}{a+b}$ we have $a/b=a'/b'$ and $a'+b'=1$. 

 If $a/b$ is irrational, so is $a$ as $a+b=1$. Hence,  Theorem~\ref{thm:BF} implies that 
  $R$ maps bijectively
$\frac{\Z+{\frac 1 2}}{a}  \cup  \frac{\Z+{\frac 1 2}}{b} $ onto $\Z+\frac12$.  Further note that if $K_1,\,K_2\in\Z$,
then $x\in[K_1,K_2)$ if and only if $R(x)\in[K_1,K_2)$.
Hence it follows that for any $K\in\N$, $R$ maps
$\big(\frac{\Z+{\frac 1 2}}{a}\cap I\big)  \cup \big(\frac{\Z+{\frac 1 2}}{b}\cap I\big) $ bijectively onto $\big(\Z+\frac12\big) \cap I$.
Hence (a) and (\ref{eqn:localpartition}) hold.

Now assume that $a$ and therefore $b=1-a$ is rational and that $a=\frac{N_0}{K_0}$ in lowest terms.  First note that the sets 
$\frac{\Z+{\frac 1 2}}{a}$ and $\frac{\Z+{\frac 1 2}}{b}$ have period $K_0$ in the sense that for any $p\in\Z$,
$$\frac{\Z+{\frac 1 2}}{a} \cap [pK_0,(p+1)K_0) = \bigg(\frac{\Z+{\frac 1 2}}{a} \cap [0,K_0)\bigg) + pK_0$$
and similarly for $\frac{\Z+{\frac 1 2}}{b}$.  This follows immediately from the observation that if $n,\,p\in\Z$ then
$pK_0\le \frac{K_0}{N_0}(n+\frac12) < (p+1)K_0$ if and only if $0\le \frac{K_0}{N_0}((n-pN_0)+\frac12) < K_0$,
and similarly for $\frac{\Z+{\frac 1 2}}{b}$.  Hence in what follows we will consider only $k,\,j$ such that
$\frac{k+{\frac 1 2}}{a}\in[0,K_0)$ and $\frac{j+{\frac 1 2}}{b}\in[0,K_0)$ which occurs if and only if
$k=0,\,1,\,\dots,\,N_0-1$ and $j=0,\,1,\,\dots,\,K_0-N_0-1$.  From this it follows that (\ref{eqn:localpartition}) holds for 
$K$ any integer multiple of $K_0$ and $a/b$ rational.

Assume now that $K_0$ is odd.  
Let us assume, that for some $k,\,j\in\Z$, $R\Big( \frac{k+{\frac 1 2}}{a}\Big) = R\Big( \frac{j+{\frac 1 2}}{b}\Big)$, that is, that
\begin{align}\label{eqn:intersect} 
  \bigg\lfloor\frac{K_0(2k+1)}{2N_0}\bigg\rfloor = \bigg\lfloor\frac{K_0(2j+1)}{2(K_0-N_0)}\bigg\rfloor.
\end{align} 
Calling the common value $q$, we would have that $K_0(2k+1) = 2N_0 q + r$
and $K_0(2j+1) = 2(K_0-N_0) q + s$, where $0\le r\le 2N_0-1$, and $0\le s\le 2(K_0-N_0)-1$.  Since $K_0$ is odd, $r,\,s>0$.
Therefore, $2K_0(j+k+1-q) = r+s$ where $2\le r+s\le 2K_0-2$.  Hence $1\le K_0(j+k+1-q) \le K_0-1$ which is a contradiction.
From this it follows both that $ \frac{\Z+{\frac 1 2}}{a} \cap  \frac{\Z+{\frac 1 2}}{b} =\emptyset$
and that $R$ is injective.  That it is surjective follows from  (\ref{eqn:localpartition}), hence, (b) is shown.

Now suppose that $K_0$ is even in which case, $N_0$ must be odd.  In this case, if \eqref{eqn:intersect} holds with common value $q$,
the same
calculations as in the previous paragraph apply, and since $K_0$ is even, $r=0$ and $s=0$ are possible.
Therefore we have  $0\le K_0(j+k+1-q) \le K_0-1$ which can only hold if $j+k+1=q$ so that $r=s=0$.
This gives immediately that 
$$\frac{K_0(2k+1)}{2N_0} = \frac{K_0(2j+1)}{2(K_0-N_0)}$$
and that each of these fractions is an integer.  Given that $0\le k\le N_0-1$ and $0\le j\le K_0-N_0-1$ it follows that
$k=\frac{N_0-1}{2}$, $j=\frac{K_0-N_0-1}{2}$, and that 
$$\frac{K_0(2k+1)}{2N_0} = \frac{K_0(2j+1)}{2(K_0-N_0)} = \frac{K_0}{2}.$$
From this and periodicity considerations, it follows that $\frac{\Z+{\frac 1 2}}{a} \cap \frac{\Z+{\frac 1 2}}{b} = K_0(\Z+1/2)$.
It also follows from the fact that $K_0/N_0>1$ that 
$$R\Big(\frac{\Z+{\frac 1 2}}{a}\cup \frac{\Z+{\frac 1 2}}{b}\Big) \cap \big(K_0(\Z+1/2)-1/2\big) = \emptyset,$$
and (c) follows.
\end{proof}

\noindent{\it Proof of Theorem~\ref{thm:rounding} in the rational case.}\quad
In the case that $\frac a b$ is irrational we refer to our discussion in Section~\ref{sec:combiningABW}. 
Note that the choice of $\varphi$ and $\psi$ does not depend on $\epsilon$. 

If $\frac{a} {b}$ is rational, this means that both $\frac{a}{a+b}$ and $\frac{b}{a+b}$ are rational.  
If $\frac{a}{a+b}=\frac{N_0}{K_0}$ in lowest terms with $K_0$ even, Lemma~\ref{lem:localpartition} implies that 
\begin{align}
  \varphi_a^{a+b}\Big(\al{a}\Big)\cap 
       \varphi_b^{a+b}\Big(\al{b}\Big)=  \frac{K_0(\Z+\frac 1 2)}{a+b}, \quad \varphi_a^{a+b}\Big(\al{a}\Big)\cup 
       \varphi_b^{a+b}\Big(\al{b}\Big)=\al{a + b} \setminus \frac{K_0(\Z-\frac 1 2)}{a+b}
\end{align}
which necessitates the correction through subtraction of characteristic functions in order to obtain a partition of $\al{a+b}$.
If $K_0$ is odd, then no such adjustment needs to be made.  The full proof in the rational case follows.

Without loss of generality, we can assume as in Lemma~\ref{lem:localpartition} that $a+b=1$ with $a$ and therefore $b$ rational, 
specifically that $a=\frac{N_0}{K_0}$ and $b=\frac{K_0-N_0}{K_0}$
in lowest terms.  As mentioned before, the sets $\frac{\Z+{\frac 1 2}}{a}$ and $\frac{\Z+{\frac 1 2}}{b}$ have period $K_0$, so it suffices to restrict our attention to the interval $[0,K_0)$.
Note that $\frac{k+{\frac 1 2}}{a}\in[0,K_0)$ if and only if $k=0,\,1,\,\dots,\,N_0-1$
and that $\frac{j+{\frac 1 2}}{b}\in[0,K_0)$ if and only if $j=0,\,1,\,\dots,\,K_0-N_0-1$.

Let us first consider the case that  $K_0$ is odd.  With $\varphi$ and $\psi$ as in Theorem~\ref{thm:rounding}, 
Lemma~\ref{lem:localpartition} implies (a) in Theorem~\ref{thm:rounding}. 

We shall now focus on the map $\varphi$ as (b) in Theorem~\ref{thm:rounding} follows for $\psi$ in the same manner. 

Consider for  $k=0,\,1,\,\dots,\,N_0-1$ the term
\begin{align*}
  \frac{k+\frac12}{a}= \frac{K_0(2k+1)}{2N_0}=\Big\lfloor\frac{K_0(2k+1)}{2N_0} \Big\rfloor+\frac {r_k}{2N_0}
\end{align*}
with $r_k\in \{1,\ldots, 2N_0-1\}$ since $K_0$ being odd implies $\frac{K_0(2k+1)}{2N_0}\notin \Z$. 
Further,
\begin{align*}
  \frac{N_0-1-k+\frac12}{a}&= \frac{K_0(2(N_0-1-k)+1)}{2N_0}= K_0- 
  \frac{K_0(2 k+1)}{2N_0}\\
  &= K_0- \Big\lfloor\frac{K_0(2k+1)}{2N_0} \Big\rfloor-\frac {r_k}{2N_0}
  = K_0 + \Big\lfloor-\frac{K_0(2k+1)}{2N_0} \Big\rfloor+1 -\frac {r_k}{2N_0}\\
  &= \Big\lfloor \frac{N_0-1-k+\frac12}{a} \Big\rfloor+ \frac {2N_0-r_k}{2N_0}
\end{align*}
where we used that $-\lfloor x \rfloor = \lfloor -x \rfloor+1$ for $x>0$ and $x\notin \Z$. Consequently
\begin{align}\label{eqn:summingrational}
   \sum_{k=0}^{N_0-1} & \bigg\lfloor{\frac{K_0(2k+1)}{2N_0}}\bigg\rfloor
        -  \frac{K_0(2k+1)}{2N_0}  
\\ \notag &  =    \frac12\left[\sum_{k=0}^{N_0-1} \bigg\lfloor{\frac{K_0(2k+1)}{2N_0}}\bigg\rfloor
        - \frac{K_0(kn+1)}{2N_0}  + \sum_{k=0}^{N_0-1} \bigg\lfloor{\frac{K_0(2(N_0-1-k)+1)}{2N_0}}\bigg\rfloor
        -  \frac{K_0(2(N_0-1-k)+1)}{2N_0} \right] \\
& \notag =    \frac12 \bigg[\sum_{k=0}^{N_0-1} -\frac{r_k}{2N_0}
        +  \sum_{k=0}^{N_0-1} \frac{r_k-2N_0}{2N_0}\bigg] = -\frac{N_0}{2}
\end{align}
and
\begin{align*}
 \sum_{\frac{k+{\frac 1 2}}{a}\in[0,K_0)} \varphi\Big(\frac{k+{\frac 1 2}}{a}\Big)-\frac{k+{\frac 1 2}}{a} =
   \sum_{k=0}^{N_0-1} & \bigg\lfloor{\frac{K_0(2k+1)}{2N_0}}\bigg\rfloor+\frac 1 2
        -  \frac{K_0(2k+1)}{2N_0}   =\frac{N_0}{2} -\frac{N_0}{2}=0.
\end{align*}
Since for $N$ any integer multiple of $K_0$
$$\sum_{\frac{k+{\frac 1 2}}{a}\in[pN,(p+1)N)} \varphi\Big(\frac{k+{\frac 1 2}}{a}\Big)-\frac{k+{\frac 1 2}}{a} = 0,$$
$\varphi$ is an $0$-Avdonin map.

Since the sums
$$\sum_{\frac{k+{\frac 1 2}}{a}\in[pN,(p+1)N)}
\Big|\varphi\Big(\as{k}{a}\Big)-\as{k}{a}\Big| \le N_0,$$
for all $p\in\Z$, it follows from Remark~\ref{rem:avdoninmap} that Theorem~\ref{thm:rounding}(b) holds.

Now assume that $K_0$ is even, which implies that $N_0$ is odd.  Recall that in this case,
the sequences
$\lfloor{\frac{n+{\frac 1 2}}{a}}\rfloor$ and
$\lfloor{\frac{n+{\frac 1 2}}{b}}\rfloor$,
for $n\in\Z$ do not partition $\Z+\frac 1 2$, in fact, the values
$ {K_0(\Z+\frac 1 2)}$ are assumed by both sequences while the values ${K_0(\Z+\frac 1 2)-\frac 1 2} $ are assumed by neither.

For $k=0,\,\dots,\,N_0-1$ and $j=0,\,\dots,\,K_0-N_0-1$,
calculations carried out in the proof of Lemma~\ref{lem:localpartition} show that
$\lfloor{\frac{k+{\frac 1 2}}{a}}\rfloor = \lfloor{\frac{j+{\frac 1 2}}{b}}\rfloor$
if and only if $\frac{k+{\frac 1 2}}{a} = \frac{j+{\frac 1 2}}{b}$ which occurs only when
$k=\frac{N_0-1}{2}$ and $j=\frac{K_0-N_0-1}{2}$ and that in this case,
$\frac{k+{\frac 1 2}}{a} = \frac{j+{\frac 1 2}}{b} = \frac{K_0}{2}$.  It also holds that
$\frac{k+{\frac 1 2}}{a} = \frac{K_0(2k+1)}{2N_0}$ is an integer only when $k=\frac{N_0-1}{2}$
and $\frac{j+{\frac 1 2}}{b} = \frac{K_0(2k+1)}{2(K_0-N_0)}$ is an integer only when
$j=\frac{K_0-N_0-1}{2}$.
Therefore, only the $\frac {N_0-1} 2$ summand in \eqref{eqn:summingrational} is 0 and we obtain  
\begin{eqnarray*}
&     &  \sum_{k=0}^{N_0-1} \bigg\lfloor{\frac{K_0(2k+1)}{2N_0}}\bigg\rfloor
        - \bigg(\frac{K_0(2k+1)}{2N_0}\bigg) \\
&  =  &  \sum_{k=0}^{\frac{N_0-1}{2}-1} \bigg\lfloor{\frac{K_0(2k+1)}{2N_0}}\bigg\rfloor
        - \bigg(\frac{K_0(2k+1)}{2N_0}\bigg) \\
&     &  \hskip.25in
        +  \sum_{k=0}^{\frac{N_0-1}{2}-1} \bigg\lfloor{\frac{K_0(2(N_0-1-k)+1)}{2N_0}}\bigg\rfloor
        - \bigg(\frac{K_0(2(N_0-1-k)+1)}{2N_0}\bigg) \\
&  =  &  \sum_{k=0}^{\frac{N_0-1}{2}-1} -\frac{r_k}{2N_0}
        +  \sum_{k=0}^{\frac{N_0-1}{2}-1} \frac{r_k-2N_0}{2N_0} = -\bigg(\frac{N_0-1}{2}\bigg)
\end{eqnarray*}
and similarly
$$\sum_{j=0}^{N_0-1} \bigg\lfloor{\frac{K_0(2j+1)}{2(K_0-N_0)}}\bigg\rfloor
        - \bigg(\frac{K_0(2j+1)}{2(K_0-N_0)}\bigg) = -\bigg(\frac{K_0-N_0-1}{2}\bigg).$$
On $[0,2K_0)$, $\varphi$ and $\psi$ satisfy
\begin{equation}\label{eqn:definerationalvarphi}
\varphi\Big(\as{k}{a}\Big) = \left\{
\begin{array}{ll}
\Big\lfloor{\as{k}{a}}\Big\rfloor + \frac12 & \mbox{if $\as{k}{a}\notin\Z$} \\
                                         &                            \\
\Big\lfloor{\as{k}{a}}\Big\rfloor + \frac12 & \mbox{if $\as{k}{a}\in\Z\cap [0,K_0)$} \\
                                         &                                     \\
\Big\lfloor{\as{k}{a}}\Big\rfloor - \frac12 & \mbox{if $\as{k}{a}\in\Z\cap [K_0,2K_0)$}
\end{array}\right.
\end{equation}
and
\begin{equation}\label{eqn:definerationalpsi}
\psi\Big(\as{k}{a}\Big) = \left\{
\begin{array}{ll}
\Big\lfloor{\as{k}{b}}\Big\rfloor + \frac12 & \mbox{if $\as{k}{b}\notin\Z$} \\
                                         &                            \\
\Big\lfloor{\as{k}{b}}\Big\rfloor - \frac12 & \mbox{if $\as{k}{b}\in\Z\cap [0,K_0)$} \\
                                         &                            \\
\Big\lfloor{\as{k}{b}}\Big\rfloor + \frac12 & \mbox{if $\as{k}{b}\in\Z\cap [K_0,2K_0)$}
\end{array}\right.
\end{equation}
and it follows that
\begin{align*}
\sum_{k=0}^{N_0-1}
         \varphi\bigg(\frac{K_0(2k+1)}{2N_0}\bigg) - \bigg(\frac{K_0(2k+1)}{2N_0}\bigg) 
&=    \sum_{k\ne\frac{N_0-1}{2}}
         \varphi\bigg(\frac{K_0(2k+1)}{2N_0}\bigg) - \bigg(\frac{K_0(2k+1)}{2N_0}\bigg) + \frac12 \\
  &=    -\bigg(\frac{N_0-1}{2}\bigg) + \frac{1}{2}(N_0-1) + \frac12 = \frac12,
\end{align*}
and similarly that
$$\sum_{k=N_0}^{2N_0-1}
         \varphi\bigg(\frac{K_0(2k+1)}{2N_0}\bigg) - \bigg(\frac{K_0(2k+1)}{2N_0}\bigg) = -\frac12.$$
         
Hence
$$\sum_{\frac{k+{\frac 1 2}}{a}\in[0,2K_0)} \varphi\Big(\frac{k+{\frac 1 2}}{a}\Big)-\frac{k+{\frac 1 2}}{a} = 
  \sum_{k=0}^{2N_0-1}
         \varphi\bigg(\frac{K_0(2k+1)}{2N_0}\bigg) - \bigg(\frac{K_0(2k+1)}{2N_0}\bigg) = \frac12 - \frac12 = 0.$$
The same equation holds for $\psi$ with $N_0$ replaced by $K_0-N_0$ in the above.
Letting $N$ be a multiple of $2K_0$, we observe that we obtain $0$-Avdonin maps.
Moreover, again by Remark~\ref{rem:avdoninmap}, 
Theorem~\ref{thm:rounding}(b) holds.

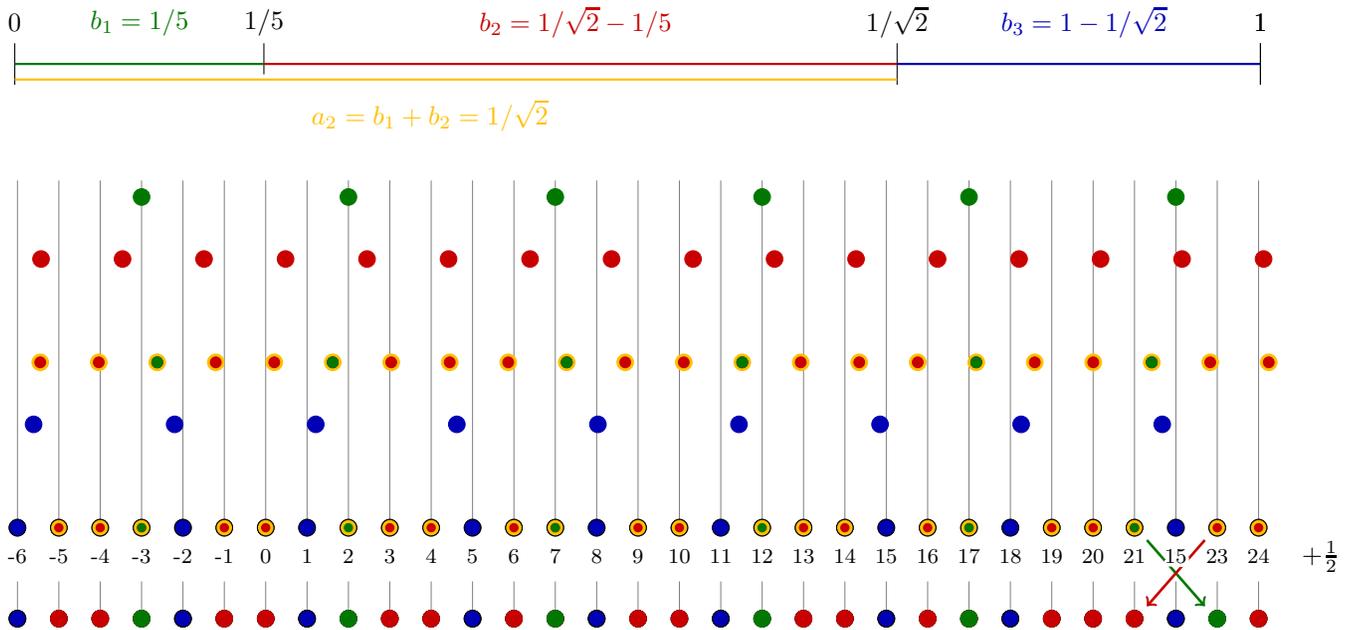
\begin{figure}
\begin{tikzpicture}[scale=1.38]

\begin{scope}
\draw[-] (0,10) -- (12,10);
\draw [-] (0,9.8) -- (0,10.2);
\node at (0,10.4) {$0$};
\draw [-] (2.4,9.9) -- (2.4,10.2);
\node at (2.4,10.4) {$1/5$};
\draw [-] (8.5,9.8) -- (8.5,10.2);
\node at (8.5,10.4) {$1/\sqrt{2}$};
\draw [-] (12,9.8) -- (12,10.2);
\node at (12,10.4) {$1$};

  \draw[green,thick] [-] (0,10) -- (2.4,10);
  \node[green] at (1.2,10.4) {$b_1=1/5$};

  \draw[red,thick] [-] (2.4,10) -- (8.5,10);
  \node[red] at (5.4,10.4) {$b_2=1/\sqrt{2}-1/5$};


  \draw[blue,thick] [-] (8.5,10) -- (12,10);
  \node[blue] at (10.3,10.4) {$b_3=1-1/\sqrt{2}$};


  \draw [-] (12,9.8) -- (12,10.2);
  \node at (12,10.4) {$1$};

  \draw[yellow,thick] [-] (0,9.85) -- (8.5,9.85);
 \node[yellow] at (4,9.5) {$a_2=b_1+b_2=1/\sqrt{2}$};


\end{scope}
\end{tikzpicture}

\vspace{.5cm}
 \begin{tikzpicture}[scale=.55]


\foreach \x in {-6,-5,-4,-3,-2,-1,0,1,2,3,4,
      5,6,7,8,9,10,11,12,13,14,15,16,17,18,19,20,21,22,23,24}
 {
        \draw[gray,style=help lines] (\x,-6.5) -- (\x,1.9);
        \draw[black,fill=white] (\x,-6.5) circle (.2cm);
        \node at (\x,-7.2) {\footnotesize \x};
      }

     \node at (25.5,-7.2) {$+\tfrac 1 2$};


    \foreach \x in {-3,2,7,12,17,22}
      {
        \draw[green,fill=green] (\x,1.5) circle (.2cm);
      }


       \foreach \x in {-3*1.97+0.98,-2*1.97+0.98,-1*1.97+0.98,0+0.98,1*1.97+0.98,2*1.97+0.98,3*1.97+0.98,4*1.97+0.98,5*1.97+0.98,6*1.97+0.98,7*1.97+0.98,8*1.97+0.98,
       9*1.97+0.98,10*1.97+0.98,11*1.97+0.98,12*1.97+0.98}
      {
        \draw[red,fill=red] (\x-.5,0) circle (.2cm);
      }


 \foreach \x in {-2*3.41+1.71,-1*3.41+1.71,
      0+1.71,1*3.41+1.71,2*3.41+1.71,3*3.41+1.71,
      4*3.41+1.71,5*3.41+1.71,6*3.41+1.71}
      {
        \draw[blue,fill=blue] (\x-.5,-4) circle (.2cm);
      }



\foreach \x in {-4*1.414,-3*1.414,-2*1.414,-1*1.414,0,
      1.414,2*1.414,3*1.414,4*1.414,5*1.414,6*1.414,7*1.414,8*1.414,9*1.414,10*1.414,11*1.414,12*1.414,13*1.414,14*1.414,15*1.414,16*1.414,17*1.414}
      {
        \draw[yellow,fill=yellow] (\x+0.707-.5,-2.5) circle (.2cm);
      }


      \foreach \x in {-5,-4,-3,-1,0,2,3,4,
      6,7,9,10,12,13,14,16,17,19,20,21,23,24}
 {
        \draw[black,fill=yellow] (\x,-6.5) circle (.2cm);
      }

      \foreach \x in {-6,-2,1,
      5,8,11,15,18,22}
 {
        \draw[black,fill=blue] (\x,-6.5) circle (.2cm);
      }



      \foreach \x in {-2*1.414,1.414,
      5*1.414,8*1.414,12*1.414,15*1.414}
      {
        \draw[green,fill=green] (\x+0.707-.5,-2.5) circle (.13cm);
      }

      \foreach \x in {-4*1.414,-3*1.414,-2*1.414,-1*1.414,0,
      1.414,2*1.414,3*1.414,4*1.414,5*1.414,6*1.414,7*1.414,8*1.414,9*1.414,10*1.414,11*1.414,12*1.414,13*1.414,14*1.414,15*1.414,16*1.414,17*1.414}
      {
        \draw[red,fill=red] (\x+0.707-.5,-2.5) circle (.13cm);
      }
      \foreach \x in {-2*1.414,1.414,
      5*1.414,8*1.414,12*1.414,15*1.414}
      {
        \draw[green,fill=green] (\x+0.707-.5,-2.5) circle (.13cm);
      }

      \foreach \x in {-5,-4,-3,-1,0,3,4,
      6,7,9,10,13,14,16,17,19,20,21,23,24}
 {
        \draw[red,fill=red] (\x,-6.5) circle (.1cm);
      }

      \foreach \x in {-3,2,7,12,17,21}
 {
        \draw[green,fill=green] (\x,-6.5) circle (.10cm);
      }


\foreach \x in {-6,-5,-4,-3,-2,-1,0,1,2,3,4,
      5,6,7,8,9,10,11,12,13,14,15,16,17,18,19,20,21,22,23,24}
 {
        \draw[gray,style=help lines] (\x,-8.7) -- (\x,-7.8);
        \draw[black,fill=white] (\x,-8.7) circle (.2cm);
      }

      \foreach \x in {-6,-5,-4,-3,-2,-1,0,1,2,3,4,
      5,6,7,8,9,10,11,12,13,14,15,16,17,18,19,20,21,22,23,24}
 {
        \draw[black,fill=yellow] (\x,-8.7) circle (.2cm);
      }

      \foreach \x in {-6,-2,1,
      5,8,11,15,18,22}
 {
        \draw[black,fill=blue] (\x,-8.7) circle (.2cm);
      }
   \foreach \x in {-5,-4,-3,-1,0,3,4,
      6,7,9,10,13,14,16,17,19,20,21,24}
 {
        \draw[red,fill=red] (\x,-8.7) circle (.2cm);
      }

      \foreach \x in {-3,2,7,12,17,23}
 {
        \draw[green,fill=green] (\x,-8.7) circle (.2cm);
      }

     \draw[green,line width=0.35mm] [->]   (14.3+7,-6.8) -- (15.7+7,-8.4);
     \draw[red,line width=0.35mm] [->]   (15.7+7,-6.8) -- (14.3+7,-8.4);
     \node at (15+7,-7.2) {\contour{white}{\footnotesize 15}};

  \end{tikzpicture} \caption{Dividing the interval $[0,1)$ at $a_1=\frac 1 5$ and at $a_2=\frac 1 {\sqrt{2}}$ to obtain three intervals, of length $b_1=1/5$, $b_2= 1 /{\sqrt{2}}-  1/5$ and $b_3=1-1/\sqrt{2}$. The yellow lattice $\sqrt{2}\Z+ 1 /\sqrt{2}$ and the blue  lattice $(\Z+ 1/2)/(1-1/\sqrt{2})\approx 3.41\Z+1.71$ are first rounded to the half integers $\Z + \frac 1 2$. As the yellow interval is then split into the green and red intervals, the respective lattices $5\Z+5/2$ and $(\Z+ 1/2)/(1/\sqrt{2}-1/5)\approx 1.97\Z+0.99$ are then mapped to the closest point in the yellow lattice  $\sqrt{2}\Z+ 1 /\sqrt{2}$, and then transported to the target lattice $\Z +  1 /2$. The successive application of a rounding function may not lead to a partition of $\Z+1/2$ into
   Riesz bases for  $[0,a_1)$, $[a_1,a_2)$ and  $[a_2,1)$, consequently an adjustment to the assignment, that is, a switching of red and green points in the target may have to be applied.}

\end{figure}

\section{Successively splitting intervals}\label{sec:split}

This section contains the central argument of this paper. We show, that, aside of required permittable small adjustments, the composition of   Avdonin maps are again Avdonin maps. The following will allow us to carry out an inductive argument based on the induction anchor given in Theorem~\ref{thm:rounding}.

\begin{theorem}\label{thm:rearrangement}
Suppose that for some $a,\,b,\,\epsilon,\,\delta>0$, 
injective maps
$$\widehat{\varphi}\colon \al{a}  \to \al{a+b},\qquad
  \widehat{\psi}\colon \al{b} \to \al{a+b},\qquad{\mbox and}\qquad
  \sigma\colon \al{a+b} \to \Z+\tfrac 12$$
have the property that there exist $K\in\N$ arbitrarily large such that for any $p\in\Z$, and $I=[\frac{pK}{a+b},\frac{(p+1)K}{a+b})$,
$\widehat{\varphi}$ (resp. $\widehat{\psi}$) maps $\al{a} \cap I$ (resp. $\al{b}\cap I$) into $\al{a+b}\cap I$, and satisfies
\begin{itemize}
\item[\rm{(1)}]
$\displaystyle{\Big\{\widehat{\varphi}\Big(\al{a}\Big),\,
\widehat{\psi}\Big(\al{b} \Big)\Big\}}$
forms a partition of $\al{a+b}$,
\item[\rm{(2)}]  there exists $\widehat{M}>0$ such that for all $k,m,\ell\in\Z$,
$$
\Big|\widehat{\varphi}\Big(\as{k}{a}\Big)-\as{k}{a}\Big|,\,
\Big|\widehat{\psi}\Big(\as{\ell}{b}\Big)-\as{\ell}{b}\Big|,\,
\Big|\sigma\Big(\as{m}{a+b}\Big)-\as{m}{a+b}\Big|
\le \widehat{M}
$$
and
\item[\rm{(3)}]  the mappings $\widehat \varphi$ and $\widehat \psi$ are $\delta$-Avdonin maps,
and $\sigma$ is an $\epsilon$-Avdonin map.
\end{itemize}
Then there exist injective maps
$$\varphi\colon \al{a}  \to \al{a+b},\qquad
  \psi\colon \al{b} \to \al{a+b},$$
such that
\begin{itemize}
\item[\rm{(a)}]  $\displaystyle{\Big\{\varphi\Big(\al{a}\Big),\,
\psi\Big(\al{b}\Big)\Big\}}$ forms a partition of
$\al{a+b}$,
\item[\rm{(b)}]  there exists $M>0$ such that for all $j\in\Z$,
$$\Big|\sigma\circ\varphi\Big(\frac{j+\frac12}{a}\Big)-\frac{j+\frac12}{a}\Big|,\,
\Big|\sigma\circ\psi\Big(\frac{j+\frac12}{b}\Big)-\frac{j+\frac12}{b}\Big|
\le M,
$$
and
\item[\rm{(c)}]  the mappings
$$\sigma\circ\varphi\colon \al{a}\to\Z+\frac12,\qquad\mbox{and}\qquad
  \sigma\circ\psi\colon \al{b}\to\Z+\frac12$$
are $(\epsilon+3\delta)$-Avdonin maps.

\end{itemize}
\end{theorem}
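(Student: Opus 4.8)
The plan is to build $\varphi$ and $\psi$ blockwise. First I would fix $K\in\N$ large (how large is specified below) and set $R=\tfrac{K}{a+b}$, so that the blocks $I_p=[pR,(p+1)R)$, $p\in\Z$, are precisely the intervals in the hypothesis; write $A_p=\al{a}\cap I_p$, $B_p=\al{b}\cap I_p$, $C_p=\al{a+b}\cap I_p$. I take $K$ large enough that, by Remark~\ref{rem:avdoninmap} together with (2)--(3), the $\delta$-Avdonin estimates for $\widehat\varphi,\widehat\psi$ and the $\epsilon$-Avdonin estimate for $\sigma$ all hold for this value of $R$, and that $\widehat\varphi$ (resp.\ $\widehat\psi$) maps $A_p$ (resp.\ $B_p$) into $C_p$. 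Then hypothesis (1) forces $\widehat\varphi(A_p)\sqcup\widehat\psi(B_p)=C_p$ for every $p$ (any $y\in C_p$ equals $\widehat\varphi(x)$ or $\widehat\psi(x')$, and the preimage must lie in the same block), so $|A_p|+|B_p|=|C_p|$. Inside each $I_p$ I then pick a subset $T_p\subseteq C_p$ with $|T_p|=|A_p|$, define $\varphi|_{A_p}$ to be any bijection $A_p\to T_p$ and $\psi|_{B_p}$ to be any bijection $B_p\to C_p\setminus T_p$, and glue over $p$.

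With this, (a) and (b) are essentially free: $\varphi,\psi$ are injective, and since the blocks $I_p$ partition $\R$ while $T_p,\,C_p\setminus T_p\subseteq I_p$, the ranges $\bigsqcup_p T_p$ and $\bigsqcup_p(C_p\setminus T_p)$ are disjoint with union $\bigsqcup_p C_p=\al{a+b}$, giving (a); also $|\varphi(x)-x|<R$, hence $|\sigma\varphi(x)-x|\le\widehat M+R=:M$ and likewise for $\psi$, giving (b), and the ranges of $\sigma\circ\varphi,\sigma\circ\psi$ are automatically separated as they lie in $\Z+\tfrac12$. Everything reduces to choosing the $T_p$ so that (c) holds with $R'=R$. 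The crucial observation is that $S_\varphi^{(p)}:=\sum_{x\in A_p}\bigl(\sigma\varphi(x)-x\bigr)$ does not depend on which bijection $A_p\to T_p$ is used: it equals $\sum_{y\in T_p}\sigma(y)-\sum_{x\in A_p}x$. Setting $\Sigma_p:=\sum_{y\in C_p}\bigl(\sigma(y)-y\bigr)$ and $\rho_p:=\sum_{y\in C_p}y-\sum_{x\in A_p}x-\sum_{x'\in B_p}x'$, one has $S_\varphi^{(p)}+S_\psi^{(p)}=\Sigma_p+\rho_p$; the $\epsilon$-Avdonin estimate for $\sigma$ gives $|\Sigma_p|<\epsilon R$, and splitting $C_p=\widehat\varphi(A_p)\sqcup\widehat\psi(B_p)$ together with the $\delta$-Avdonin estimates for $\widehat\varphi,\widehat\psi$ gives $|\rho_p|<2\delta R$. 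So it suffices to choose $T_p$ with $|S_\varphi^{(p)}|\le g$ for a constant $g=g(a,b,\widehat M)$ independent of $p$ and $K$: then for $K$ large, $|S_\varphi^{(p)}|\le g<\delta R$ and $|S_\psi^{(p)}|\le|\Sigma_p|+|\rho_p|+|S_\varphi^{(p)}|<(\epsilon+3\delta)R$, and dividing by $R$ gives the $(\epsilon+3\delta)$-Avdonin bound for both composed maps.

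To produce such a $T_p$, I would first observe that $D_p:=\sigma(C_p)\subseteq\Z+\tfrac12$, listed increasingly as $d_1<\dots<d_N$ ($N=|C_p|$), has all consecutive gaps at most $g:=2\widehat M+\tfrac{2}{a+b}$. Indeed, $d_1,\dots,d_i$ are images of $i$ points of $C_p$ lying in $[pR,d_i+\widehat M]$ and $d_{i+1},\dots,d_N$ of $N-i$ points lying in $[d_{i+1}-\widehat M,(p+1)R)$; comparing with the exact $\tfrac1{a+b}$-spacing of $C_p$ yields $d_i\ge pR-\widehat M+\tfrac{i-1}{a+b}$ and $d_{i+1}\le pR+\widehat M+\tfrac{i+1}{a+b}$, whence $d_{i+1}-d_i\le g$. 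A standard exchange argument---start from the $|A_p|$ smallest elements of $D_p$ and repeatedly swap an index for the next unused one---then shows that the set of attainable sums $\bigl\{\sum_{y\in T}\sigma(y):T\subseteq C_p,\ |T|=|A_p|\bigr\}$ comes within $g$ of every point of the interval bounded below by the sum of the $|A_p|$ smallest elements of $D_p$ and above by the sum of the $|A_p|$ largest. Finally, a direct computation using the $\tfrac1a$- and $\tfrac1{a+b}$-spacings and $|\sigma(y)-y|\le\widehat M$ shows that $\sum_{x\in A_p}x$ lies in the \emph{interior} of that interval, at distance at least $\tfrac{ab}{2(a+b)}R^2-O(\widehat M R)$ from either endpoint, which exceeds $g$ once $K$ is large, uniformly in $p$. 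Choosing $T_p$ to realize an attainable sum within $g$ of $\sum_{x\in A_p}x$ then gives $|S_\varphi^{(p)}|\le g$, as required. (One checks as a byproduct that the resulting $\varphi,\psi$ again map $A_p,B_p$ into $C_p$, hence into the corresponding block for every integer multiple of $K$; this is what permits the step to be iterated as in Section~\ref{sec:split}.)

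The hard part will be the last estimate: verifying that $\sum_{x\in A_p}x$ sits strictly inside the range of attainable subset sums, with a margin that dominates the constant $g$ uniformly in $p$. This is where the exact spacings of the lattices $\al{a}$ and $\al{a+b}$, the uniform bound $\widehat M$ on $\sigma$, and the freedom to enlarge $K$ all have to be played off against one another; the $g$-density of the attainable sums, while also needing some care, is a routine exchange argument.
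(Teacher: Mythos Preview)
Your proposal is correct and follows the same blockwise strategy as the paper: on each block $I_p$ choose the range $T_p\subseteq C_p$ of $\varphi$ so that $|S_\varphi^{(p)}|$ is bounded by a constant independent of $K$, take $K$ large enough that this constant falls below $\delta R$, and read off the bound for $S_\psi^{(p)}$ from the identity $S_\varphi^{(p)}+S_\psi^{(p)}=\Sigma_p+\rho_p$. The only cosmetic differences are that the paper exhibits injections with $S(\varphi)>0$ and $S(\varphi)<0$ via explicit constructions built from $\widehat\varphi$ and $\widehat\psi$ (rather than estimating the extreme subset sums directly from the lattice spacing as you do), and carries out the adjacent-swap argument in $C_p$ instead of in $D_p=\sigma(C_p)$, obtaining the slightly sharper gap constant $2\widehat M+\tfrac{1}{a+b}$.
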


\begin{proof}
Fix an integer $K_0$ (to be chosen later) and consider an arbitrary interval of the form
$I =[\frac{\ell_0K_0}{a+b},\frac{(\ell_0+1)K_0}{a+b})$ for some $\ell_0\in\Z$.
Clearly there are $K_0$ elements of $\frac{\Z+\frac12}{a+b}$ in $I$ and
Lemma~\ref{lem:localpartition}
implies that if $N_0=\#\{\frac{\Z+\frac12}{a}\cap I\}$ then
$K_0-N_0=\#\{\frac{\Z+\frac12}{b}\cap I\}$.   Let
$$\frac{\Z+\frac12}{a}\cap I
  = \Big\{\frac{k_0+\frac12}{a},\,\frac{k_0+\frac32}{a},\,\dots,\,\frac{k_0+N_0-1+\frac12}{a}\Big\}$$
and
$$\frac{\Z+\frac12}{b}\cap I
  = \Big\{\frac{m_0+\frac12}{b},\,\frac{m_0+\frac32}{b},\,\dots,\,\frac{m_0+K_0-N_0-1+\frac12}{b}\Big\}.$$
Given an injective map
\begin{equation}\label{eqn:injectivemap}
\varphi\colon \frac{\Z+\frac12}{b}\cap I \to \frac{\Z+\frac12}{a+b}\cap I
\end{equation}
define
$$S(\varphi) = \sum_{k=k_0}^{k_0+N_0-1}
  \sigma\circ\varphi\Big(\frac{k+\frac12}{a}\Big) - \Big(\frac{k+\frac12}{a}\Big).$$
We wish to consider values of $S(\varphi)$ as $\varphi$ runs through all injections of the form
(\ref{eqn:injectivemap}).
Note that $S(\varphi)$ depends only on the range of $\varphi$ and not on $\varphi$ itself.

We first show that there exist injections $\varphi$ such that $S(\varphi)<0$.  To see this, choose
some integer $M_0$ with $0\le M_0\le N_0$ and define $\varphi$ so that
\begin{eqnarray*}
\varphi\Big(\frac{\Z+\frac12}{a}\,\cap\, I\Big)
&  =  & \widehat{\varphi}\Big(\Big\{\frac{k_0+\frac12}{a},\,\dots,\,
        \frac{k_0+N_0-M_0-1+\frac12}{a}\Big\}\Big) \\
&     & \cup\,\,\,\,
\widehat{\psi}\Big(\Big\{\frac{m_0+\frac12}{b},\,\dots,\,\frac{m_0+M_0-1+\frac12}{b}\Big\}\Big)
\end{eqnarray*}
Therefore,
\begin{align*}
S(\varphi)  =& \sum_{k=k_0}^{k_0+N_0-1}
         \sigma\circ\varphi\Big(\frac{k+\frac12}{a}\Big) - \Big(\frac{k+\frac12}{a}\Big)\\
=&  \sum_{k=k_0}^{k_0+N_0-M_0-1}
         \sigma\circ\widehat{\varphi}\Big(\frac{k+\frac12}{a}\Big) - \Big(\frac{k+\frac12}{a}\Big) +
       \sum_{m=m_0}^{m_0+M_0-1}
         \sigma\circ\widehat{\psi}\Big(\frac{m+\frac12}{b}\Big) - \Big(\frac{m+\frac12}{b}\Big)\\
   &+    \sum_{m=m_0}^{m_0+M_0-1} \Big(\frac{m+\frac12}{b}\Big) -
         \sum_{k=k_0+N_0=M_0}^{k_0+N_0-1} \Big(\frac{k+\frac12}{a}\Big) \\
= &    \sum_{k=k_0}^{k_0+N_0-M_0-1} \sigma\circ\widehat{\varphi}\Big(\frac{k+\frac12}{a}\Big) - \widehat{\varphi}\Big(\frac{k+\frac12}{a}\Big)
   +     \sum_{k=k_0}^{k_0+N_0-M_0-1} \widehat{\varphi}\Big(\frac{k+\frac12}{a}\Big)
                                    - \Big(\frac{k+\frac12}{a}\Big) \\
 &   +     \sum_{m=m_0}^{m_0+M_0-1} \sigma\circ\widehat{\psi}\Big(\frac{m+\frac12}{b}\Big)
                                - \widehat{\psi}\Big(\frac{m+\frac12}{b}\Big)
   +     \sum_{m=m_0}^{m_0+M_0-1}\widehat{\psi}\Big(\frac{m+\frac12}{b}\Big)
                                - \Big(\frac{m+\frac12}{b}\Big) \\
 &    +     \sum_{m=m_0}^{m_0+M_0-1} \Big(\frac{m+\frac12}{b}\Big) -
         \sum_{k=k_0+N_0=M_0}^{k_0+N_0-1} \Big(\frac{k+\frac12}{a}\Big) \\
 \le & \  2\widehat{M}(N_0-M_0)+2\widehat{M}(M_0) + \frac{(m_0+M_0)(m_0+M_0-1)}{2b} - \frac{m_0(m_0-1)}{2b} + \frac{M_0}{2b} \\
    &   \hskip.25in
             -     \frac{(k_0+N_0)(k_0+N_0-1)}{2a}
             + \frac{(k_0+N_0-M_0) (k_0+N_0-M_0-1)}{2a} - \frac{M_0}{2a}\\
= &\    N_0\Big(2\widehat{M}-\frac{M_0}{a}\Big) + M_0\Big(\frac{m_0}{b} - \frac{k_0}{a}\Big) + \frac{M_0^2}{2}\Big(\frac{1}{a}+\frac{1}{b}\Big) \\
< &\    N_0\Big(2\widehat{M}-\frac{M_0}{a}\Big)
        + \Big(M_0+\frac{M_0^2}{2}\Big)\Big(\frac{1}{a}+\frac{1}{b}\Big)
\end{align*}
where we have used the observation that
$\big|\frac{m_0}{b} - \frac{k_0}{a}\big|\le \frac{1}{a}+\frac{1}{b}$.
Now choose $M_0$ so that $2\widehat{M}a<M_0$, then choose $K_0$ so that the corresponding $N_0$ satisfies
$N_0>M_0$ and $S(\varphi)<0$.

Similarly we can show that there is an injection $\varphi$ so that $S(\varphi)>0$ by choosing $0\le M_0\le N_0$ and letting
\begin{eqnarray*}
\varphi\Big(\frac{\Z+\frac12}{a}\,\cap\, I\Big)
&  =  & \widehat{\varphi}\Big(\Big\{\frac{k_0+M_0+\frac12}{a},\,\dots,\,
        \frac{k_0+N_0-1+\frac12}{a}\Big\}\Big) \\
&     & \cup\,\,\,\,
\widehat{\psi}\Big(\Big\{\frac{m_0+K_0-N_0-M_0+\frac12}{b},\,\dots,\,\frac{m_0+K_0-N_0-1+\frac12}{b}\Big\}\Big)
\end{eqnarray*}
and proceeding as before.

Next we will show that the largest distance between successive values of $S(\varphi)$ is
at most $\displaystyle{\frac{1}{a+b}+2\widehat{M}}$.  To that end, let
$\varphi$ be an injection of the form (\ref{eqn:injectivemap})
and observe that since
$\big(\frac{\Z+\frac12}{b}\big) \cap I \ne \emptyset$,
$$\Big(\frac{\Z+\frac12}{a+b} \cap I\Big)
  \setminus \varphi\Big(\frac{\Z+\frac12}{a} \cap I\Big) \ne\emptyset.$$
Now let
$x\in \big(\frac{\Z+{\frac 1 2}}{a+b}\cap I\big) \setminus \varphi\big(\frac{\Z+{\frac 1 2}}{a} \cap I\big)$
be adjacent to an element $y\in\varphi\big(\frac{\Z+{\frac 1 2}}{a} \cap I\big)$
and define the injection $\varphi'$ by
$$\varphi'\Big(\frac{k+\frac12}{a}\Big) =
\left\{\begin{array}{cl}
\varphi\big(\frac{k+\frac12}{a}\big)
			 & \mbox{if $\varphi\big(\frac{k+\frac12}{a}\big)\ne y$} \\
			 &    \\
     x       & \mbox{if $\varphi\big(\frac{k+\frac12}{a}\big) =  y$.}
       \end{array}\right.$$
Then
\begin{align*}
 &\left|\sum_{\frac{k+\frac12}{a}\in I} 
		 \sigma\circ\varphi\Big(\frac{k+\frac12}{a}\Big)
		 - \sum_{\frac{k+\frac12}{a}\in I}
		 \sigma\circ\varphi'\Big(\frac{k+\frac12}{a}\Big)
		 \right| 
 \le  \left|\sum_{\varphi(\frac{k+\frac12}{a})\ne y}
		\sigma\circ\varphi\Big(\frac{k+\frac12}{a}\Big)
		 - \sigma\circ\varphi'\Big(\frac{k+\frac12}{a}\Big)
		 \right| + |\sigma(y)-\sigma(x)| \\
  & \hspace{7cm} =    |\sigma(y)-\sigma(x)| 
 \le   |\sigma(y)-y| + |y-x| + |\sigma(x)-x| 
 \le   2\widehat{M} + \frac{1}{a+b}.
\end{align*}

It remains to show that any two such injections can be obtained from each other by a series of such operations.  This is obvious if we start with an injection $\varphi_0$ satisfying
$$\varphi_0\Big(\frac{\Z+\frac12}{a} \cap I\Big) =
\Big\{\frac{\ell_0+\frac12}{a+b},\,\dots,\,\frac{\ell_0+N_0-1}{a+b}\Big\}$$
and noting that the range of any other injection $\varphi$ can be
obtained by moving $\frac{\ell_0+N_0-1}{a+b}$ incrementally to the largest element of
$\varphi\big(\frac{\Z+{\frac 1 2}}{a} \cap I\big)$, then moving
$\frac{\ell_0+N_0-2}{a+b}$ to the second largest element of
$\varphi\big(\frac{\Z+{\frac 1 2}}{a} \cap I\big)$ and so on.

The above arguments imply that there is an injection $\varphi$ with the
property that
$$\left|\sum_{\frac{k+\frac12}{a}\in I}\sigma\circ\varphi\Big(\frac{k+\frac12}{a}\Big)
  - \frac{k+\frac12}{a}\right| \le \widehat{M}+\frac{1}{2(a+b)}.$$

We will now construct $\varphi$ and $\psi$ as required by the theorem.
Choose $K_0$ so large that $\widehat{M}(a+b)+\frac{1}{2}\le \delta\,K_0$, and
that (\ref{eqn:avdoninmap}) is satisfied for the $\delta$-Avdonin maps $\varphi$
and $\psi$, and for the $\epsilon$-Avdonin map $\sigma$.
Construct an injection $\varphi$ as above
on every interval of the form
$I=[\frac{m K_0}{a+b},\frac{(m+1)K_0}{a+b})$, $m\in\Z$.
Then construct an injection
$$\psi\colon \Big(\frac{\Z+\frac 1 2 }{b}\Big)\cap I\to
\Big(\frac{\Z+\frac12}{a+b}\,\Z \cap I\Big)
\setminus \varphi\Big(\frac{\Z+\frac12}{a} \cap I\Big)$$
by arbitrarily assigning values.  Clearly this implies the existence of injections
$$\varphi\colon \frac{\Z+\frac12}{a} \to \frac{\Z+\frac12}{a+b},\qquad
  \psi\colon \frac{\Z+\frac12}{b} \to \frac{\Z+\frac12}{a+b}$$
as required by the theorem.

By construction, it is clear that (a) is satisfied,
and note that for $j\in\Z$,
\begin{align*}
\Big|\sigma\circ\varphi\Big(\frac{j+\frac12}{a}\Big) - \frac{j+\frac12}{a}\Big|
 \le    \Big|\sigma\circ\varphi\Big(\frac{j+\frac12}{a}\Big)
          - \varphi\Big(\frac{j+\frac12}{a}\Big)\Big|
    +  \Big|\varphi\Big(\frac{j+\frac12}{a}\Big) - \frac{j+\frac12}{a}\Big| 
 \le   \widehat{M} + \frac{K_0}{a+b} = M.
\end{align*}
This gives (b). Finally note that for all $m\in\Z$,
\begin{align*}
\Big|\sum_{\frac{k+{\frac 1 2}}{a}
	    \in \big[\frac{mK_0}{a+b},\frac{(m+1)K_0}{a+b}\big)}
	    \sigma\circ\varphi\Big(\frac{k+\frac12}{a}\Big)
  - \frac{k+\frac12}{a}\Big|
 \le    \widehat{M}+\frac{1}{2(a+b)} 
 \le    \delta\,\frac{K_0}{a+b} 
 \le    (\epsilon+3\delta)\,\frac{K_0}{a+b}.
\end{align*}
and that
\begin{align*}
&      \Big|\sum_{\frac{k+{\frac 1 2}}{b} \in
		[\frac{mK_0}{a+b},\frac{(m+1)K_0}{a+b})}
		\sigma\circ\psi\Big(\frac{k+\frac12}{b}\Big)
  	    - \frac{k+\frac12}{b}\Big| \\
& \le  \Big|\sum_{\frac{k+{\frac 1 2}}{a} \in
		[\frac{mK_0}{a+b},\frac{(m+1K_0}{a+b})}
		\sigma\circ\varphi\Big(\frac{k+\frac12}{a}\Big) - \frac{k+\frac12}{a} 
            + \sum_{\frac{k+{\frac 1 2}}{b} \in
		[\frac{mK_0}{a+b},\frac{(m+1)K_0}{a+b})}
		\sigma\circ\psi\Big(\frac{k+\frac12}{b}\Big)
            - \frac{k+\frac12}{b}\Big| \\
&       \hskip.25in
            + \Big|\sum_{\frac{k+{\frac 1 2}}{a} \in
		[\frac{mK_0}{a+b},\frac{(m+1)K_0}{a+b})}
		\sigma\circ\varphi\Big(\frac{k+\frac12}{a}\Big)
            - \frac{k+\frac12}{a}\Big| \\
& \le   \Big|\sum_{\frac{k+{\frac 1 2}}{a} \in
		[\frac{mK_0}{a+b},\frac{(m+1)K_0}{a+b})}
        \sigma\circ\varphi\Big(\frac{k+\frac12}{a}\Big)
        - \widehat{\varphi}\Big(\frac{k+\frac12}{a}\Big) 
            + \sum_{\frac{k+{\frac 1 2}}{b} \in
		[\frac{mK_0}{a+b},\frac{(m+1)K_0}{a+b})}
		\sigma\circ\psi\Big(\frac{k+\frac12}{b}\Big)
            - \widehat{\psi}\Big(\frac{k+\frac12}{b}\Big)\Big| \\
&       \hskip.25in + \Big|\sum_{\frac{k+{\frac 1 2}}{a} \in
		[\frac{mK_0}{a+b},\frac{(m+1)K_0}{a+b})}
        \widehat{\varphi}\Big(\frac{k+\frac12}{a}\Big)
        - \frac{k+\frac12}{a}\Big| 
\hskip.25in
        + \Big|\sum_{\frac{k+{\frac 1 2}}{b} \in
		[\frac{mK_0}{a+b},\frac{(m+1)K_0}{a+b})}
		\widehat{\psi}\Big(\frac{k+\frac12}{b}\Big)
            - \frac{k+\frac12}{b}\Big| \\
&       \hskip.25in +\Big|\sum_{\frac{k+{\frac 1 2}}{a} \in
		[\frac{mK_0}{a+b},\frac{(m+1)K_0}{a+b})}
		\sigma\circ\varphi\Big(\frac{k+\frac12}{a}\Big)
            - \frac{k+\frac12}{a}\Big| \\
& \le   \Big|\sum_{\frac{k+{\frac 1 2}}{a+b} \in
		[\frac{mK_0}{a+b},\frac{(m+1)K_0}{a+b})}
		\sigma\Big(\frac{k+\frac12}{a+b}\Big)
		- \frac{k+\frac12}{a+b}\Big|
           + 3\delta \frac{K_0}{a+b} < (\epsilon + 3\delta)\frac{K_0}{a+b}.
\end{align*}
This proves (c).
\end{proof}

\begin{figure}
  \begin{center}

  \begin{tikzpicture}[scale=1]

\begin{scope}
  \draw [-] (0,10) -- (12,10);
\!\!\!
  \draw [-] (0,9.8) -- (0,10.2);
  \node at (0,10.4) {$0$};

  \node at (1,9.4) {$\frac   {\Z+\frac 1 2}{b_1}$};

  \draw [-] (2.8,9.8) -- (2.8,10.2);
  \node at (1.4,10.4) {$b_1$};
  \node at (2.8,10.6) {$a_1$};

  \node at (3.7,9.4) {$\frac {\Z+\frac 1 2} {b_2} $};

  \draw [-] (4.6,9.8) -- (4.6,10.2);
  \node at (3.7,10.4) {$b_2$};
  \node at (4.6,10.6) {$a_2$};

  \node at (6.5,9.4) {$\frac {\Z+\frac 1 2} {b_3}$};

  \draw [-] (8.4,9.8) -- (8.4,10.2);
  \node at (6.5,10.4) {$b_3$};
  \node at (8.6,10.6) {$a_3$};

  \node at (9.4,9.4) {$\frac {\Z+\frac 1 2} {b_4}$};

  \draw [-] (10.4,9.8) -- (10.4,10.2);
  \node at (9.4,10.4) {$b_4$};
  \node at (10.35,10.6) {$a_4$};

  \node at (11.3,9.4) {$\frac {\Z+\frac 1 2} {b_5} $};

  \draw [-] (12,9.8) -- (12,10.2);
  \node at (11.1,10.4) {$b_5$};
  \node at (12,10.4) {$1$};
\end{scope}

\begin{scope}[shift={(10.3,7)}]
  \draw [right hook-] (-.7,2) -- (0,0.6);
  \draw [left hook-] (.8,2) -- (0,0.6);
  \draw [->>](0,.6) -- (0,.2);
  \node at (-.6,1.3) {$\widehat\varphi_4$};
  \node at (-.15,1.5) {$ \varphi_4$};
  \node at (0.3,1.65) {$\psi_4$};
  \node at (.7,1.25) {$\widehat\psi_4$};
\end{scope}


\begin{scope}[shift={(0,-2.5)}]
  \draw [-] (0,10) -- (12,10);

  \draw [-] (0,9.8) -- (0,10.2);
  \node at (0,10.4) {$0$};

  \node at (10.3,9.4) {$\frac {\Z+\frac 1 2} {c_3}$ };

  \draw [-] (4.6,9.8) -- (4.6,10.2);

  \node at (6.5,9.4) {$\frac {\Z+\frac 1 2} {b_3} $};

  \draw [-] (8.4,9.8) -- (8.4,10.2);
  \draw [-] (12,9.8) -- (12,10.2);
  \node at (12,10.4) {$1$};
\end{scope}

\begin{scope}[shift={(8.28,4.5)}]
  \draw [right hook-] (-1.8,2) -- (0,0.6);
  \draw [left hook-] (1.7,2.1) -- (0,0.6);
  \draw [->>](0,.6) -- (0,.2);
  \node at (-1.3,1.3) {$\widehat\varphi_3$};
  \node at (-.7,1.5) {$ \varphi_3$};
  \node at (0.7,1.65) {$\psi_3$};
  \node at (1.2,1.25) {$\widehat\psi_3$};
\end{scope}

\begin{scope}[shift={(0,-5)}]
  \draw [-] (0,10) -- (12,10);
  \draw [-] (0,9.8) -- (0,10.2);
  \node at (0,10.4) {$0$};
  \node at (3.9,9.4) {$\frac {\Z+\frac 1 2} {b_2}$};

   \node at (8.3,9.4) {$\frac {\Z+\frac 1 2} {c_2} $};
  \draw [-] (4.6,9.8) -- (4.6,10.2);
\draw [-] (2.8,9.8) -- (2.8,10.2);
  \draw [-] (12,9.8) -- (12,10.2);
  \node at (12,10.4) {$1$};
\end{scope}

\begin{scope}[shift={(7.2,2)}]
  \draw [right hook-] (-3,2) -- (0,0.6);
  \draw [left hook-] (1,2) -- (0,0.6);
  \draw [->>](0,.6) -- (0,.2);
  \node at (-2.2,1.4) {$\widehat\varphi_2$};
  \node at (-1.2,1.4) {$\varphi_2$};
  \node at (.2,1.4) {$\psi_2$};
  \node at (.9,1.4) {$\widehat\psi_2$};
\end{scope}


\begin{scope}[shift={(0,-7.5)}]
  \draw [-] (0,10) -- (12,10);

  \draw [-] (0,9.8) -- (0,10.2);
  \node at (0,10.4) {$0$};

\draw [-] (2.8,9.8) -- (2.8,10.2);

  \node at (7.2,9.4) {$\frac {\Z+\frac 1 2} {c_1} $};

  \node at (1.4,9.4) {$\frac {\Z+\frac 1 2} {b_1} $};

  \draw [-] (12,9.8) -- (12,10.2);
  \node at (12,10.4) {$1$};
\end{scope}

\begin{scope}[shift={(6,-.5)}]
  \draw [left hook-] (1,2) -- (0,0.6);
  \draw [right hook-] (-4,2.1) -- (0,0.6);
  \draw [->>](0,.6) -- (0,.2);
  \node at (-0.9,1.4) {$\varphi_1=\widehat{\varphi}_1$};
  \node at (1.4,1.4) {$\psi_1=\widehat{\psi}_1$};
\end{scope}


\begin{scope}[shift={(0,-10)}]
  \draw [-] (0,10) -- (12,10);

  \draw [-] (0,9.8) -- (0,10.2);
  \node at (0,10.4) {$0$};

  \node at (6,9.4) {$ \Z+\frac 1 2$};

  \draw [-] (12,9.8) -- (12,10.2);
  \node at (12,10.4) {$1$};
\end{scope}



  \end{tikzpicture}

  \end{center}\label{fig:successiveapplication}
  \caption{
  Proof by successive application of Theorem~\ref{thm:rounding} and Theorem~\ref{thm:rearrangement}. Theorem~\ref{thm:rounding} provides us with the Avdonin maps $\widehat{\varphi}_k$, $\widehat{\psi}_k$. Using Theorem~\ref{thm:rearrangement}, we can first adjust $\widehat{\varphi}_2$ and $\widehat{\psi}_2$ to obtain Avdonin maps $\varphi_2$ and $\psi_2$ with the property that $\psi_1\circ \varphi_2$ and $\psi_1\circ \psi_2$ are Avdonin maps.  Next, we update $\widehat{\varphi}_3$ and $\widehat{\psi}_3$ to  
$\varphi_3$ and $\psi_3$ with the property that $\psi_1\circ \psi_2 \circ \varphi_3$ and $\psi_1\circ \psi_2 \circ \phi_3$  are Avdonin maps, and so on.
  }
\end{figure}
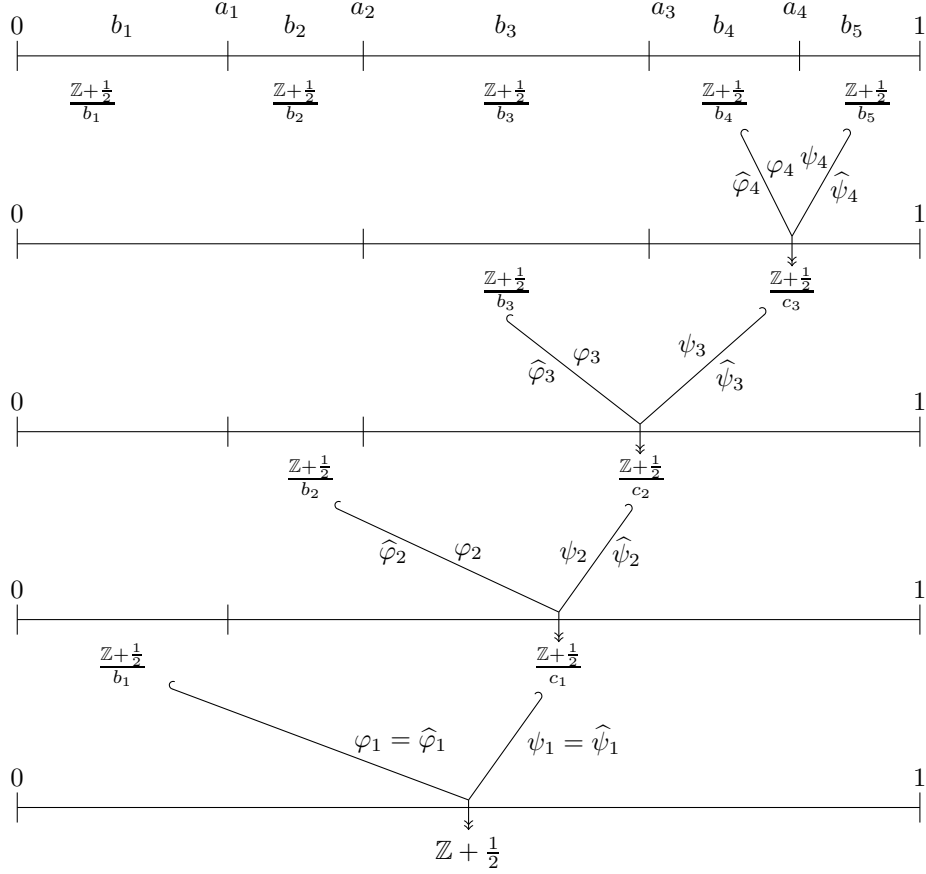

\section{Adjoining intervals}\label{sec:comb}
The tools derived so far allow us to prove Theorem~\ref{thm:main1} in the finite and the countable setting.  In order to prove Theorem~\ref{thm:main1}, we need to show how Riesz bases of intervals of length $a$ and $b$ can be combined to a Riesz bases of intervals of length $a+b$.

\begin{theorem}\label{lem:implicitmap}
Let $a,\,b>0$ and suppose that
$$\tau\colon\frac{\Z+\frac12}{a} \to \Z+\frac12,\qquad
\mbox{and}\qquad
  \eta\colon\frac{\Z+\frac12}{b} \to \Z+\frac12$$
are $\epsilon$-Avdonin maps with disjoint ranges.  Then there exists a $4\epsilon$-Avdonin map
$$\rho\colon \frac{\Z+\frac12}{a+b} \to \Z+\frac12$$
such that
$$\rho\Big(\frac{\Z+\frac12}{a+b}\Big) = \tau\Big(\frac{\Z+\frac12}{a}\Big)
    \,\cup\,\eta\Big(\frac{\Z+\frac12}{b}\Big).$$
\end{theorem}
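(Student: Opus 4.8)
\medskip
\noindent\emph{Proof plan.}\quad The idea is to transport the partition of $\frac{\Z+\frac12}{a+b}$ supplied by Theorem~\ref{thm:rounding} back through $\tau$ and $\eta$. Let $\varphi=\varphi_a^{a+b}$ and $\psi=\varphi_b^{a+b}$ be the rounding maps of \eqref{eqn:roundingfunction} (with the finite corrections of Theorem~\ref{thm:rounding} in the even-$K_0$ rational case), so that by Theorem~\ref{thm:rounding}(a) the images $\varphi\big(\frac{\Z+\frac12}{a}\big)$ and $\psi\big(\frac{\Z+\frac12}{b}\big)$ partition $\frac{\Z+\frac12}{a+b}$, and by Theorem~\ref{thm:rounding}(b) the maps $\varphi,\psi$ are $\overrightarrow{0}$-Avdonin ($0$-Avdonin when $a/b$ is rational). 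Two further elementary features of these maps will matter: their displacement is bounded by a fixed multiple of $\tfrac1{a+b}$; and, since $x\mapsto\frac{\lfloor(a+b)x\rfloor+\frac12}{a+b}$ carries every interval $[\tfrac{K_1}{a+b},\tfrac{K_2}{a+b})$ with $K_1,K_2\in\Z$ onto itself (as in the proof of Lemma~\ref{lem:localpartition}), $\varphi$ and $\psi$ map the block $J_p:=[\tfrac{pL}{a+b},\tfrac{(p+1)L}{a+b})$ into itself for every $L\in\N$ and $p\in\Z$ --- apart from the finitely many corrected points per period, whose net effect is $O(1)$ once $2K_0\mid L$. Then I would define
\[
\rho\colon \frac{\Z+\frac12}{a+b}\to\Z+\tfrac12, \qquad \rho=\tau\circ\varphi^{-1}\ \text{ on }\ \varphi\Big(\tfrac{\Z+\frac12}{a}\Big), \quad \rho=\eta\circ\psi^{-1}\ \text{ on }\ \psi\Big(\tfrac{\Z+\frac12}{b}\Big).
\]
Because $\tau$ and $\eta$ have disjoint ranges, $\rho$ is a well-defined injection whose range is exactly $\tau\big(\tfrac{\Z+\frac12}{a}\big)\cup\eta\big(\tfrac{\Z+\frac12}{b}\big)$ and is $\tfrac12$-separated as a subset of $\Z+\tfrac12$; so only the Avdonin inequality for $\rho$ remains.

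To get it with constant $4\epsilon$, I would fix $\epsilon'\in(0,\epsilon)$, pick a large integer $L$ (a multiple of $2K_0$ and of all periods arising when $a/b$ is rational), and set $R=\tfrac{L}{a+b}$ large enough that $\varphi,\psi$ satisfy \eqref{eqn:avdoninmap} with tolerance $\epsilon'$ and radius $R$, and that $\tau,\eta$ satisfy \eqref{eqn:avdoninmap} with radius $R$; for this last point I invoke that the maps $\tau,\eta$ occurring in the applications (as outputs of Theorem~\ref{thm:rearrangement}, which by its part~(b) carry a uniform displacement bound) satisfy \eqref{eqn:avdoninmap} for \emph{every} sufficiently large radius by Remark~\ref{rem:avdoninmap}. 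Note $J_p=[pR,(p+1)R)$. For $p\in\Z$, splitting the sum over $J_p$ by the two halves of the partition and using that $\varphi,\psi$ preserve $J_p$ gives
\[
\sum_{z\in\frac{\Z+\frac12}{a+b}\cap J_p}\big(\rho(z)-z\big) = \sum_{x\in\frac{\Z+\frac12}{a}\cap J_p}\big(\tau(x)-x\big) + \sum_{x\in\frac{\Z+\frac12}{a}\cap J_p}\big(x-\varphi(x)\big) + \big(\text{same with }\eta,\psi\big) + O(1).
\]
The first sum is $<\epsilon R$ since $J_p$ is a block for $\tau$; the second is $<\epsilon'R$ since $J_p$ is a block for the (near-)$0$-Avdonin map $\varphi$ (and equals $0$ in the rational case); likewise for the $\eta,\psi$ terms. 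Dividing by $R$ and taking $L$ large enough that $O(1)/R<\epsilon-\epsilon'$ yields $\big|\tfrac1R\sum_{z\in J_p}(\rho(z)-z)\big|<2\epsilon+2\epsilon'<4\epsilon$, uniformly in $p$, so $\rho$ is a $4\epsilon$-Avdonin map with the prescribed range.

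The main obstacle is the joint choice of the block length $R$. On one side it must be of the form $\tfrac{L}{a+b}$, $L\in\N$: only for such blocks does the counting identity \eqref{eqn:localpartition} hold, which is precisely what makes the rounding maps $\varphi,\psi$ send blocks to blocks and transmit their $0$-Avdonin cancellation. On the other side $R$ must be an admissible radius for the $\epsilon$-Avdonin property of $\tau$ and $\eta$. Reconciling these forces one to know that $\tau,\eta$ obey \eqref{eqn:avdoninmap} for all large radii rather than for a single one, which is exactly the content of Remark~\ref{rem:avdoninmap} once a uniform displacement bound is at hand; lacking such a bound one would instead have to align $R$ with the fixed Avdonin radii of $\tau$ and $\eta$, possible only when those are commensurable with $\tfrac1{a+b}$. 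A lesser bookkeeping issue is the even-$K_0$ rational case: one must check the corrected points of $\varphi,\psi$ lie in the interiors of the blocks $J_p$ (true when $2K_0\mid L$), so they only perturb the displayed sums by the bounded amount already absorbed into the $O(1)$.
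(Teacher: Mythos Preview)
Your approach is essentially identical to the paper's: define $\rho=(\tau\circ\varphi^{-1})\cup(\eta\circ\psi^{-1})$ using the rounding maps $\varphi,\psi$ of Theorem~\ref{thm:rounding}, then telescope the block sum $\sum_{z\in I}(\rho(z)-z)$ into the Avdonin sums for $\tau,\eta$ (contributing $2\epsilon$) and for $\varphi,\psi$ (contributing $2\epsilon$), using that $\varphi,\psi$ carry blocks $I=[\tfrac{\ell K_0}{a+b},\tfrac{(\ell+1)K_0}{a+b})$ to themselves and partition $\tfrac{\Z+\frac12}{a+b}\cap I$. Your explicit worry about aligning the block length with the Avdonin radii of $\tau,\eta$ is well placed---the paper's proof silently assumes the same thing when it writes ``there exists $R_0$ such that if $\tfrac{K_0}{a+b}>R_0$ then (\ref{eqn:implicitmap2}) is less than $2\epsilon$,'' which as you note is only justified via Remark~\ref{rem:avdoninmap} once a uniform displacement bound on $\tau,\eta$ is available (and in the paper's applications it always is).
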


\begin{proof}
In order to define $\rho$ on $\frac{\Z+\frac12}{a+b}$, fix an interval of the form
$I=[\frac{\ell K_0}{a+b},\frac{(\ell+1) K_0}{a+b})$ where $\ell\in\Z$ and $K_0$ is an integer to be chosen
later.  We will define $\rho$ on $\frac{\Z+\frac12}{a+b}\,\cap\,I$ for each $\ell$.

 Let $\varphi$ and $\psi$ be rounding maps given in Theorem~\ref{thm:rounding}.
 Then for any $\epsilon>0$, $\varphi$ and $\psi$ are $\epsilon$-Avdonin maps and there exist natural numbers $K_0$ arbitrarily large such that 
 $$\varphi\colon \frac{\Z+\frac12}{a} \cap I \to \frac{\Z+\frac12}{a+b}\,\cap\,I,\ \mbox{and}\ 
   \psi\colon \frac{\Z+\frac12}{b} \cap I \to \frac{\Z+\frac12}{a+b}\,\cap\,I$$
are injective maps with disjoint ranges.  Also recall that by Lemma~\ref{lem:localpartition} the ranges of $\varphi$ and $\psi$ restricted to $I$ partition 
$\frac{\Z+\frac12}{a+b}\cap I$.  Now define the map 
$$\rho=(\tau\circ \varphi^{-1}) \cup (\eta\circ\psi^{-1})\colon \frac{\Z+\frac12}{a+b} \to \Z+\frac12$$
by 
\begin{align*}
  \rho\Big(\as{m}{a+b}\Big)=
  \begin{cases}
    \tau \circ \varphi^{-1}\big(\frac{m+{\frac 1 2}}{a+b}\big), \quad \text{if}\quad \frac{m+{\frac 1 2}}{a+b} \quad \text{is in the range of }\varphi;\\
    \eta \circ \psi^{-1}\big(\frac{m+{\frac 1 2}}{a+b}\big), \quad \text{if}\quad \frac{m+{\frac 1 2}}{a+b} \quad \text{is in the range of }\psi.
  \end{cases}
\end{align*}
We compute
\begin{eqnarray}
&     &  \bigg|\sum_{\frac{m+{\frac 1 2}}{a+b}\in I}
      \rho\Big(\frac{m+\frac12}{a+b}\Big) - \Big(\frac{m+\frac12}{a+b}\Big)\bigg|\notag \\
& \le & \bigg|\sum_{\frac{m+{\frac 1 2}}{a+b}\in I}
      (\tau\circ \varphi^{-1}) \cup (\eta\circ\psi^{-1})\Big(\frac{m+\frac12}{a+b}\Big) -
      \sum_{\frac{k+{\frac 1 2}}{a}\in I} \tau\Big(\frac{k+\frac12}{a}\Big) -
      \sum_{\frac{j+{\frac 1 2}}{b}\in I} \eta\Big(\frac{j+\frac12}{b}\Big)\bigg|
      \label{eqn:implicitmap1}  \\
&  +  & \bigg|\sum_{\frac{k+{\frac 1 2}}{a}\in I} \tau\Big(\frac{k+\frac12}{a}\Big) -
        \Big(\frac{k+\frac12}{a}\Big)\bigg| +
      \bigg|\sum_{\frac{j+{\frac 1 2}}{b}\in I} \eta\Big(\frac{j+\frac12}{b}\Big)  -
        \Big(\frac{j+\frac12}{b}\Big)\bigg|
      \label{eqn:implicitmap2}  \\
&  +  & \bigg|\sum_{\frac{k+{\frac 1 2}}{a}\in I} \Big(\frac{k+\frac12}{a}\Big) -
        \varphi\Big(\frac{k+\frac12}{a}\Big)\bigg| +
      \bigg|\sum_{\frac{j+{\frac 1 2}}{b}\in I} \Big(\frac{j+\frac12}{b}\Big)  -
        \psi\Big(\frac{j+\frac12}{a}\Big)\bigg|
      \label{eqn:implicitmap3}   \\
&  +  &  \bigg|\sum_{\frac{k+{\frac 1 2}}{a}\in I} \varphi\Big(\frac{k+\frac12}{a}\Big) +
      \sum_{\frac{j+{\frac 1 2}}{b}\in I} \psi\Big(\frac{j+\frac12}{b}\Big)  -
      \sum_{\frac{m+{\frac 1 2}}{a+b}\in I}  \Big(\frac{m+\frac12}{a+b}\Big)\bigg|.
      \label{eqn:implicitmap4}
\end{eqnarray}
By definition of $\rho$, no matter which $K_0$ is chosen, (\ref{eqn:implicitmap1}) $=0$
and again by \eqref{eqn:localpartition}, (\ref{eqn:implicitmap4}) also vanishes.
Since $\tau$ and $\eta$ are $\epsilon$-Avdonin maps, there exists $R_0$ such that if
$\frac{K_0}{a+b}>R_0$ then (\ref{eqn:implicitmap2}) is less than $2\epsilon$.  Finally note
that by Theorem~\ref{thm:rounding}(b), there is an $R_1>0$ such that if $\frac{K_0}{a+b}>R_1$
then (\ref{eqn:implicitmap3}) is less than $2\epsilon$.
\end{proof}

\begin{figure}
\begin{center}
 \begin{tikzpicture}[scale=.75]

\begin{scope}
  \draw [-] (0,10) -- (12,10);

  \draw [-] (0,9.8) -- (0,10.2);
  \node at (0,10.4) {$0$};


  \draw [-] (2.8,9.8) -- (2.8,10.2);
  \node at (2.8,10.4) {$a_1$};


  \draw [-] (4.6,9.8) -- (4.6,10.2);
  \node at (4.6,10.4) {$a_2$};

  \node at (6.2,9.4) {$\frac {\Z+\frac 1 2 } {a_3-a_2} $};

  \draw [-] (8.4,9.8) -- (8.4,10.2);
  \node at (8.4,10.4) {$a_3$};

  \node at (9.4,9.4) {$\frac {\Z+\frac 1 2 } {a_4-a_3} $};

  \draw [-] (10.4,9.8) -- (10.4,10.2);
  \node at (10.4,10.4) {$a_4$};


  \draw [-] (12,9.8) -- (12,10.2);
  \node at (12,10.4) {$1$};
\end{scope}

  \begin{scope}[shift={(7.4,7)}]
  \draw [right hook-] (-1.3,1.8) -- (0,0.6);
  \draw [left hook-] (1.9,1.8) -- (0,0.6);
  \draw [->>](0,.6) -- (0,.2);
  \node  at (-.45,1.5) {$\phi$};
  \node  at (.8,1.55) {$\psi$};
\end{scope}

\begin{scope}[shift={(0,-2.5)}]
  \draw [-] (0,10) -- (12,10);

  \draw [-] (0,9.8) -- (0,10.2);
  \node at (0,10.4) {$0$};


  \draw [-] (4.6,9.8) -- (4.6,10.2);
  \node at (4.6,10.4) {$a_2$};

  \node  at (7.4,9.25) {$\frac{\Z+\frac 1 2 }  {a_4-a_2}$};



  \draw [-] (10.4,9.8) -- (10.4,10.2);
  \node at (10.4,10.4) {$a_4$};


  \draw [-] (12,9.8) -- (12,10.2);
  \node at (12,10.4) {$1$};
\end{scope}

\begin{scope}[shift={(0,-5)}]
  \draw [-] (0,10) -- (12,10);

  \draw [-] (0,9.8) -- (0,10.2);
  \node at (0,10.4) {$0$};






  \draw [-] (12,9.8) -- (12,10.2);
  \node at (12,10.4) {$1$};
\end{scope}

\begin{scope}[shift={(0,-7.5)}]
  \draw [-] (0,10) -- (12,10);

  \draw [-] (0,9.8) -- (0,10.2);
  \node at (0,10.4) {$0$};




  \draw [-] (12,9.8) -- (12,10.2);
  \node at (12,10.4) {$1$};
\end{scope}

\begin{scope}[shift={(0,-10)}]
  \draw [-] (0,10) -- (12,10);

  \draw [-] (0,9.8) -- (0,10.2);
  \node at (0,10.4) {$0$};

  \node at (6,9.4) {$\Z$};

  \draw [-] (12,9.8) -- (12,10.2);
  \node at (12,10.4) {$1$};
\end{scope}
\draw [-] (5,2) -- (6,.3);
\draw [-] (5.5,2.2) -- (6,.3);
\draw [-] (7,2) -- (6,.3);
\draw [right hook-] (5.8,8.7) -- (6,.3);
\draw [left hook-] (9.6,8.7) -- (6,.3);
\draw [left hook-] (7.2,6.2) -- (6,.3);
 \draw [->>](6,.3) -- (6,-.3);
 \node at (8,4) {$\eta$};
 \node at (7,4) {$\rho$};
 \node at (5.5,4) {$\tau$};

  \end{tikzpicture}
  \end{center}
\caption{The map $\rho$ is implicitly defined by the maps $\tau$ and $\eta$. We shall show that $\rho$ is an Avdonin map. }
\end{figure}
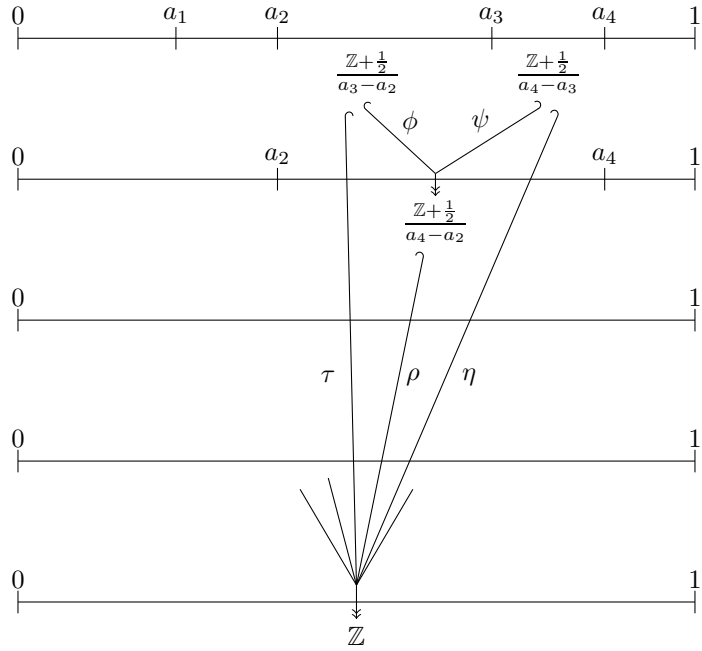

\section{Proof of Theorems~\ref{thm:main2} and~\ref{thm:maincountable}}\label{sec:proof} 

It remains to employ our technical results on Avdonin sequence, namely Theorems~\ref{thm:rounding},~\ref{thm:rearrangement} and \ref{lem:implicitmap} to prove the main results of this paper.

\smallskip
\noindent {\it Proof of Theorem~\ref{thm:main2} and Theorem~\ref{thm:maincountable}}.
\smallskip

The following construction is illustrated in  Figure~\ref{fig:successiveapplication}.
Set $c_j=\sum_{k=j+1}^n b_j$ for $j=1,\ldots,n-1$ in the finite case described in Theorem~\ref{thm:main2} and $c_j=\sum_{k=j+1}^\infty b_j$, $j\in\N$ in the countable setting of Theorem~\ref{thm:maincountable}.  As maps, we choose 
\begin{align*}
\widehat{\varphi}_j: \frac {\Z+\frac 1 2}{b_j} \to \frac {\Z+\frac 1 2} {b_j+c_j},  \quad \widehat{\psi}_j: \frac {\Z+\frac 1 2}{c_j} \to \frac {\Z+\frac 1 2} {b_j+c_j}
\end{align*}
as defined in Theorem~\ref{thm:rounding}. These  are $\epsilon$-Avdonin maps for all $\epsilon>0$, and each pair partitions the respective set 
$\frac {\Z+\frac 1 2} {b_j+c_j} = \frac {\Z+\frac 1 2} {c_{j-1}}$.   We can take $\widehat{\varphi}_j$ and $\widehat{\psi}_j$ to be the simple rounding maps defined 
in Theorem~\ref{thm:rounding} and Lemma~\ref{lem:localpartition}.  As such, each mapping satisfies the additional localization property that allows for the
application of Theorem~\ref{thm:rearrangement}.

For fixed $K$
set $\delta = 4^{-K}$,  $\epsilon_1=\frac \delta 2$ and $\epsilon_j=\frac \delta {2^j\cdot 3}$ for $j\geq 2$. 
By construction, $\Phi_1=\varphi_1=\widehat{\varphi}_1$ and   $\Psi_1=\psi_1=\widehat{\psi}_1$ are $\epsilon_1=\frac \delta 2$-Avdonin maps.  
Now, we apply Theorem~\ref{thm:rearrangement} to the 
$\epsilon_2=\frac \delta {12}$-Avdonin maps $\widehat{\varphi}_2$ 
and $\widehat{\psi}_2$ 
with $\Psi_1=\widehat{\psi}_1$ playing the role of $\sigma$.
Thus we obtain maps ${\varphi}_2$ and ${\psi}_2$ with the property that  $\Phi_2=\psi_1\circ \varphi_2$ and $\Psi_2=\psi_1\circ\psi_2$ are $\epsilon_1+3\epsilon_2=\frac \delta 2 +3\frac \delta {12}=(1-\frac 1 4)\delta$-Avdonin maps.

Similarly, we consider $\widehat{\varphi}_3$ and $\widehat{\psi}_3$ as $\epsilon_3=\frac \delta {2^3 3}$-Avdonin maps and apply Theorem~\ref{thm:rearrangement}
once again with now $\Psi_2$ playing the role of $\sigma$.  This yields maps ${\varphi}_3$ and ${\psi}_3$ so that 
$\Phi_3=\Psi_2\circ\varphi_3 = \psi_1\circ \psi_2\circ \varphi_3$ and $\Psi_3=\Psi_2\circ\psi_3 = \psi_1\circ \psi_2\circ \psi_3$ are 
$(1-\frac 1 4)\delta+ 3 \frac \delta {2^3 3}=(1-\frac 1 8)\delta$-Avdonin maps. 

In general, assuming that $(1-\frac 1{2^k})\delta$-Avdonin maps
\begin{align*}
\Phi_k:\al{b_k} \to \Z+\frac 1 2,\ \mbox{and}\ \Psi_k:\al{c_k} \to \Z+\frac 1 2
\end{align*}
have been constructed, we apply Theorem~\ref{thm:rearrangement} to the $\frac{\delta}{2^{k+1}3}$-Avdonin maps
$\widehat{\varphi}_{k+1}$ and $\widehat{\psi}_{k+1}$ with $\Psi_k$ playing the role of $\sigma$.
This yields maps ${\varphi}_{k+1}$ and ${\psi}_{k+1}$ so that 
$\Phi_{k+1}=\Psi_k\circ\varphi_{k+1}$ and $\Psi_{k+1}=\Psi_k\circ\psi_{k+1}$ are 
$(1-\frac 1 {2^k})\delta+ 3 \frac \delta {2^{k+1} 3}=(1-\frac 1 {2^{k+1}})\delta$-Avdonin maps. 

In this way we obtain a sequence of maps
\begin{align*}
\Phi_j:\al{b_j} \to \Z+\frac 1 2,\ \mbox{and}\ \Psi_j:\al{c_j} \to \Z+\frac 1 2,\ j\in\N,
\end{align*}
with the property that $\Phi_j$ and $\Psi_j$ are $\delta$-Avdonin maps and hence that the sets
$
  \Lambda_j=\Phi_j(\al{b_j})\subseteq\Z+\frac1 2,
$
are disjoint and $\mathcal E(\Lambda_j)$ is a Riesz basis for $L^2(I)$ for any interval $I$ of length $b_j$.

In the setting of Theorem~\ref{thm:main2}, that is, if $b_j$ is the finite sequence $b_1,\,b_2,\,\dots,\,b_n$, we set $\Phi_n=\Psi_{n-1}$
and obtain in addition that $\{\Lambda_j\}_{j=1}^n$ partitions $\Z+\frac 1 2$. 

In order to complete the proof of Theorem~\ref{thm:main2} and to prove Theorem~\ref{thm:maincountable}
pick for $k\leq K$ indices $J = \{j_1,\ldots,j_k\}\subseteq \{1,\ldots,n\}$ (resp. $\N$).  Apply Theorem~\ref{lem:implicitmap}
to the $\delta$-Avdonin maps $\Phi_{j_1}$ and $\Phi_{j_2}$ and obtain a $4\delta$-Avdonin map
\begin{align*}
   \Rho_{\{j_1,j_2\}}: \al{b_{j_1}+b_{j_2}}\longrightarrow \Lambda_{j_1}\cup\Lambda_{j_2}\subseteq \Z+\frac 1 2.
\end{align*}
Applying Theorem~\ref{lem:implicitmap} again to $\Rho_{\{j_1,j_2\}}$ and $\Phi_{j_3}$ we obtain an $8\delta$-Avdonin map
\begin{align*}
   \Rho_{\{j_1,j_2,j_3\}}: \al{b_{j_1}+b_{j_2}+b_{j_3}}\longrightarrow \Lambda_{j_1}\cup\Lambda_{j_2}\cup\Lambda_{j_3}\subseteq \Z+\frac 1 2.
\end{align*}
Repeating this argument, we ultimately obtain a $4^{K-1}\delta=\frac 1 4$-Avdonin map
\begin{align*}
  \Rho_J  : \al{\sum_{j\in J}b_j}\longrightarrow \bigcup_{j\in J}\Lambda_j \subseteq \Z+\frac 1 2
\end{align*}
so that $\E(\Lambda_{j_1}\cup \ldots \cup \Lambda_{j_k})$ is a Riesz bases for $L^2(I)$ for any interval $I$ of length $b_{j_1}+ \ldots + b_{j_k}$.
This completes the proof of Theorems~\ref{thm:main2} and \ref{thm:maincountable}.

 \hfill $\Box$

\section{Consequences for sampling theory}\label{sec:sampling}

The unitary Fourier transform $\mathcal F$ is densily defined by 
\begin{align*}  
  \mathcal F:L^2(\R)\to L^2(\R), \quad f\mapsto \widehat f (y)
  =\mathcal F f(y)=\int_{\R} f(x)\, e^{-2\pi i xy } \, dx.
\end{align*}
It holds $\mathcal F^{-1}f(x)=\mathcal F f(-x)$.
The Paley Wiener space $PW[-\frac a 2 ,\frac a 2 ]$ consists of functions bandlimited to the interval $[-\frac a 2 ,\frac a 2 ]$, that is, of those $L^2(\R)$-functions whose Fourier transform is supported on $[-\frac a 2 ,\frac a 2 ]$. As $PW[ -\frac a 2 ,\frac a 2  ]=\mathcal F^{-1} L^2[-\frac a 2 ,\frac a 2 ]$, any orthonormal, orthogonal or Riesz bases $\mathcal E (\Lambda)$ on $L^2[-\frac a 2 ,\frac a 2 ]$ gives rise to an orthonormal, orthogonal or Riesz bases of $PW[-\frac a 2 ,\frac a 2 ]$ given by
\begin{align*}
  \mathcal F^{-1}\mathcal E (\Lambda)
  =\left\{\frac {\sin \pi a(\cdot +\lambda)}{\pi (\cdot + \lambda)}\right\}_{\lambda \in \Lambda}.
\end{align*}

Before stating our result in terms of sampling theory, we require the following definitions.

\begin{definition}
A set $\Gamma\subseteq \R$ is called a stable set of sampling for $PW[-\frac a 2, \frac a 2]$ if the map $f\mapsto \{ f(\gamma) \}_{\gamma \in \Gamma}$ is injective and boundedly invertible, and a set of interpolation for $PW[-\frac a 2, \frac a 2]$ if for each set $\{ c_\gamma \}_{\gamma \in \Gamma}$ there exists at least one $f\in PW[-\frac a 2, \frac a 2]$ with $f(\gamma)=c_\gamma$.
\end{definition}

Necessary criteria for a separated set $\Gamma$ to be a set sampling and of interpolation $\Gamma$ for $PW[-\frac a 2, \frac a 2]$ can be formulated using the  lower and upper Beurling densities of $\Gamma$. They  are given by
\begin{align*}
  D^{-}(\Gamma)=\lim_{r\to \infty} \inf_{a\in\R} \frac{\#\{\gamma\in \Gamma\cap [a,a+r)\}}r, \quad
  D^{+}(\Gamma)=\lim_{r\to \infty} \sup_{a\in\R} \frac{\#\{\gamma\in \Gamma\cap [a,a+r)\}}r.
\end{align*}
If $D^{-}(\Gamma)=D^{+}(\Gamma)$, then $\Gamma$ is said to have Beurling density $D(\Gamma)=D^{-}(\Gamma)=D^{+}(\Gamma)$.
\begin{theorem}
  Let $\Gamma$ be separated.
  \begin{itemize}
    \item[(a)] If $\Gamma$ is a stable set of sampling for $PW[-\frac a 2, \frac a 2]$ then $D^{-}(\Gamma)\geq a$.
    \item[(b)] If $\Gamma$ is a set of interpolation for $PW[-\frac a 2, \frac a 2]$ then $D^{+}(\Gamma)\leq a$.
  \end{itemize}
\end{theorem}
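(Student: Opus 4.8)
The statement is the classical necessary density theorem of Landau \cite{Lan67}, and the plan is to derive both bounds from the spectral theory of the time-and-band limiting (concentration) operator. Write $PW:=PW[-\tfrac a2,\tfrac a2]$, let $K(t)=\tfrac{\sin\pi a t}{\pi t}$ be its reproducing kernel (so $K(0)=a$, $\|K\|_{L^2}^2=a$, $f(\gamma)=\langle f,K(\cdot-\gamma)\rangle$), and for a bounded interval $J$ put $T_J\colon PW\to PW$, $T_Jf=P_{PW}(\chi_J f)$; this is a positive Hilbert--Schmidt operator with $0\le T_J\le I$, $\langle T_Jf,f\rangle=\|f\|_{L^2(J)}^2$, eigenvalues $1\ge\lambda_1(J)\ge\lambda_2(J)\ge\cdots\to0$. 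The classical trace identities $\sum_k\lambda_k(J)=\operatorname{tr}T_J=a|J|$ and $\sum_k\lambda_k(J)(1-\lambda_k(J))=\operatorname{tr}(T_J-T_J^2)=O(\log|J|)$ (the second being the Landau--Widom ``plunge region'' estimate, from $\operatorname{tr}T_J^2=\iint_{J\times J}K(x-y)^2\,dx\,dy=a|J|-O(\log|J|)$) imply that for each fixed $\alpha\in(0,\tfrac12)$ the number of eigenvalues of $T_J$ in $(\alpha,1-\alpha)$ is $O_\alpha(\log|J|)$. Hence the spectral subspace $V_J^\alpha:=\operatorname{span}\{\text{eigenfunctions with }\lambda_k(J)\ge1-\alpha\}$ has $\dim V_J^\alpha\ge a|J|-C_\alpha\log|J|$ with $\|f\|_{L^2(\R\setminus J)}^2\le\alpha\|f\|^2$ for $f\in V_J^\alpha$, while $W_J^\alpha:=\operatorname{span}\{\text{eigenfunctions with }\lambda_k(J)>\alpha\}$ has $\dim W_J^\alpha\le\tfrac{a|J|}{1-\alpha}+C_\alpha\log|J|$ and every $f\in PW\ominus W_J^\alpha$ satisfies $\|f\|_{L^2(J)}^2\le\alpha\|f\|^2$. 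Both density claims will follow by comparing $\#(\Gamma\cap J)$ with these dimensions and sending $|J|\to\infty$.

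For part (a): stable sampling means $\{K(\cdot-\gamma)\}_{\gamma\in\Gamma}$ is a frame for $PW$, i.e.\ $A\|f\|^2\le\sum_{\gamma\in\Gamma}|f(\gamma)|^2\le B\|f\|^2$. Fix $\alpha$ small and enlarge $J$ by a collar of fixed width $d$ to an interval $J'$. A Plancherel--P\'olya--type localization for bandlimited functions (using the $\delta$-separation of $\Gamma$ and the rapid decay of a Schwartz mollifier whose transform is $1$ on $[-\tfrac a2,\tfrac a2]$) gives $\sum_{\gamma\in\Gamma\setminus J'}|f(\gamma)|^2\le C\|f\|_{L^2(\R\setminus J)}^2+\varepsilon(d)\|f\|^2$ with $\varepsilon(d)\to0$ as $d\to\infty$, uniformly in $J$ and $f$. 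Choosing $\alpha$ and $d$ fixed (depending only on $A$, $C$, and the separation) with $C\alpha+\varepsilon(d)<A$, any $f\in V_J^\alpha$ with $f|_{\Gamma\cap J'}=0$ obeys $\sum_{\gamma\in\Gamma}|f(\gamma)|^2=\sum_{\gamma\notin J'}|f(\gamma)|^2\le(C\alpha+\varepsilon(d))\|f\|^2<A\|f\|^2$, forcing $f=0$. Thus $f\mapsto(f(\gamma))_{\gamma\in\Gamma\cap J'}$ is injective on $V_J^\alpha$, so $\#(\Gamma\cap J')\ge\dim V_J^\alpha\ge a|J|-C_\alpha\log|J|$; subtracting the $O(1)$ points of $\Gamma$ in the fixed-width collar $J'\setminus J$ and letting $|J|\to\infty$ yields $D^-(\Gamma)\ge a$.

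For part (b): by the open mapping theorem, interpolation on $\Gamma$ (necessarily for $\ell^2$ data, since $f\in PW$ forces $(f(\gamma))\in\ell^2$) is stable, i.e.\ $\{K(\cdot-\gamma)\}_{\gamma\in\Gamma}$ is a Riesz sequence: $A'\|c\|^2\le\|\sum_\gamma c_\gamma K(\cdot-\gamma)\|^2$ for finitely supported $c$. Fix $J$, put $M=\#(\Gamma\cap J)$, enlarge $J$ by a fixed collar of width $d$ to $J'$, and set $D=\dim W_{J'}^\alpha\le\tfrac{a|J'|}{1-\alpha}+C_\alpha\log|J'|$. Since every $\gamma\in J$ lies at distance $\ge d$ from $\R\setminus J'$, each $K(\cdot-\gamma)$ is $(1-\beta(d))$-concentrated on $J'$ with $\beta(d)=O(1/d)$, hence $\operatorname{dist}(K(\cdot-\gamma),W_{J'}^\alpha)\le\eta(d)$ with $\eta(d)\to0$, uniformly. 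Write the Gram matrix $G=(\langle K(\cdot-\gamma),K(\cdot-\gamma')\rangle)_{\gamma,\gamma'\in\Gamma\cap J}$ as $G=G_1+G_2$, where $G_1$ is the Gram matrix of the projections of the $K(\cdot-\gamma)$ onto $W_{J'}^\alpha$ (so $\operatorname{rank}G_1\le D$, whence $\dim\ker G_1\ge M-D$) and $G_2$ is the Gram matrix of the residuals (so $G_2\ge0$ and $\operatorname{tr}G_2\le M\eta(d)^2$). Then, when $M>D$, $\mu_{\min}(G)\le\min_{0\ne v\in\ker G_1}\tfrac{v^*G_2v}{\|v\|^2}\le\tfrac{\operatorname{tr}G_2}{M-D}\le\tfrac{M\eta(d)^2}{M-D}$, while the Riesz lower bound forces $\mu_{\min}(G)\ge A'$; rearranging gives $M\le D/(1-\eta(d)^2/A')$. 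Keeping $d$ fixed (large enough that $\eta(d)^2<A'$) and sending $|J|\to\infty$, then $d\to\infty$ and $\alpha\to0$, gives $D^+(\Gamma)\le a$.

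The main obstacle is the concentration-operator input of the first paragraph: the logarithmic Landau--Widom bound on the number of plunge eigenvalues (resting on $\operatorname{tr}T_J-\operatorname{tr}T_J^2=O(\log|J|)$, itself a short but not entirely routine estimate on $\int|t|\,K(t)^2\,dt$) carries the essential content and is usually cited rather than reproved; the second technical point is the bandlimited localization lemma of part (a). In both parts one must keep the threshold $\alpha$ and the collar width $d$ fixed and tuned only against the frame/Riesz constants $A,B,A',B'$ and the separation of $\Gamma$, so that the $O(\log|J|)$ and $O(1)$ errors are genuinely negligible as $|J|\to\infty$.
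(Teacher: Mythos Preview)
The paper does not prove this theorem; it is stated in Section~\ref{sec:sampling} as background (the classical Landau necessary density conditions, \cite{Lan67}) and no argument is given. So there is nothing to compare against on the paper's side.

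Your sketch is exactly Landau's original method via the time-and-band limiting operator $T_J$: the trace identities $\operatorname{tr}T_J=a|J|$ and the plunge estimate $\operatorname{tr}(T_J-T_J^2)=O(\log|J|)$ bound the number of eigenvalues in $(\alpha,1-\alpha)$, and comparing $\#(\Gamma\cap J)$ with the dimensions of the high- and low-eigenvalue spectral subspaces gives both density bounds. This is the standard and correct approach, and you have identified the two genuinely nontrivial inputs (the Landau--Widom trace asymptotic and the bandlimited localization lemma in part~(a)) rather than sweeping them under the rug.

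Two minor remarks. In part~(b) you upgrade interpolation to a Riesz sequence via the open mapping theorem; this is fine, but note that it requires reading the paper's definition of ``set of interpolation'' as interpolation of $\ell^2(\Gamma)$ data (as you do), since the paper's phrasing ``for each set $\{c_\gamma\}$'' is imprecise on this point. Also in part~(b), your eigenvalue-perturbation estimate $\mu_{\min}(G)\le \operatorname{tr}G_2/(M-D)$ on the kernel of $G_1$ is a clean way to conclude, though Landau's original argument phrases the same dimension count slightly differently; both are valid.
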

Clearly, the density does not characterize sets of sampling and of interpolation.  For example, $\Z\setminus\{0\}$ has Beurling density 1 but is not a set of sampling for $PW[-\frac 1 2, \frac 1 2]$ and $\Z\cup\{\frac 1 2\}$ has Beurling density 1 but is not a set of interpolation for $PW[-\frac 1 2, \frac 1 2]$.

\begin{theorem}
  Let $b_1,\ldots,b_n>0$ satisfy $\sum_{j=1}^n b_j=B$. Then exist pairwise disjoint sets $\Lambda_1,\Lambda_2,\ldots,\Lambda_n\subseteq \frac 1 B \Z$ with $\bigcup_{j=1}^n \Lambda_j=\frac 1 B \Z$, $D(\Lambda_j)=b_j$ and the property that
for any $J\subseteq\{1,\,2,\,\dots,\,n\}$, 
  $ \bigcup_{j\in J}\Lambda_j$ is a set of sampling for 
  $PW[-\frac a 2, \frac a 2]$ as long as 
  \begin{align*}
    D(\bigcup_{j\in J}\Lambda_j)=\sum_{j\in J} D(\Lambda_j)\geq a.
  \end{align*}  If $\sum_{j\in J} D(\Lambda_j)= a$, then $ \bigcup_{j\in J}\Lambda_j$ is also a  set of interpolation for $PW[-\frac a 2, \frac a 2]$.
\end{theorem}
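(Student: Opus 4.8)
The plan is to obtain this statement as a corollary of Theorem~\ref{thm:main2} by rescaling and by translating the Riesz-basis conclusion into the language of sampling and interpolation through the Fourier dictionary recorded in Section~\ref{sec:sampling}. First I would normalise: set $\tilde b_j = b_j/B$, so $\sum_{j=1}^n \tilde b_j = 1$, and apply Theorem~\ref{thm:main2} to get pairwise disjoint $\tilde\Lambda_1,\dots,\tilde\Lambda_n\subseteq\Z$ with $\bigcup_{j=1}^n\tilde\Lambda_j=\Z$ such that $\E\big(\bigcup_{j\in J}\tilde\Lambda_j\big)$ is a Riesz basis for $L^2(I)$ for every interval $I$ with $|I|=\sum_{j\in J}\tilde b_j$ and every $J\subseteq\{1,\dots,n\}$. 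Then I would set $\Lambda_j=\tfrac1B\tilde\Lambda_j\subseteq\tfrac1B\Z$; disjointness and $\bigcup_{j=1}^n\Lambda_j=\tfrac1B\Z$ are immediate.

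Next I would pin down the Beurling densities. By the construction underlying Theorem~\ref{thm:main2} (Theorems~\ref{thm:rounding} and \ref{thm:rearrangement}), each $\tilde\Lambda_j$ is a separated, bounded perturbation of the lattice $\tfrac{\Z+1/2}{\tilde b_j}$; since a bounded injective perturbation of a uniform lattice preserves both the lower and the upper Beurling density, $D(\tilde\Lambda_j)=\tilde b_j$ and hence $D(\Lambda_j)=B\tilde b_j=b_j$. (Alternatively, one may invoke Landau's density theorem: a Riesz basis of exponentials for $L^2$ of an interval of length $\ell$ is at once a stable set of sampling and a set of interpolation for the corresponding Paley--Wiener space, forcing $D^-=D^+=\ell$.) For a finite disjoint union of sets each having a Beurling density the counting functions add uniformly in the base point, so $D\big(\bigcup_{j\in J}\Lambda_j\big)=\sum_{j\in J}D(\Lambda_j)=\sum_{j\in J}b_j$; write $S_J$ for this number.

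The central step is the standard reformulation: for a separated $\Lambda\subseteq\R$, $\E(\Lambda)$ is a Riesz basis for $L^2[-\beta/2,\beta/2]$ if and only if the system $\mathcal F^{-1}\E(\Lambda)$ of shifted sinc kernels is a Riesz basis for $PW[-\beta/2,\beta/2]$, equivalently $\Lambda$ is simultaneously a stable set of sampling and a set of interpolation for $PW[-\beta/2,\beta/2]$. I would apply this to $\bigcup_{j\in J}\tilde\Lambda_j$, which by construction is a Riesz basis for $L^2[-\beta_J/2,\beta_J/2]$ with $\beta_J=\sum_{j\in J}\tilde b_j=S_J/B$, and then dilate: the map $f\mapsto f(B\,\cdot\,)$ is, up to the constant factor $B^{-1/2}$, a unitary isomorphism of $PW[-\beta_J/2,\beta_J/2]$ onto $PW[-S_J/2,S_J/2]$ carrying the sampling and interpolation properties at a set $\Gamma$ to the same properties at $\tfrac1B\Gamma$. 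Hence $\bigcup_{j\in J}\Lambda_j=\tfrac1B\bigcup_{j\in J}\tilde\Lambda_j$ is simultaneously a stable set of sampling and a set of interpolation for $PW[-S_J/2,S_J/2]$.

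Finally I would invoke two elementary monotonicity facts. If $a\le S_J$ then $PW[-a/2,a/2]$ is a closed subspace of $PW[-S_J/2,S_J/2]$, so the sampling inequality that holds on the larger space restricts to the smaller one and $\bigcup_{j\in J}\Lambda_j$ is a stable set of sampling for $PW[-a/2,a/2]$, which is the sampling assertion. When $a=S_J$ the set $\bigcup_{j\in J}\Lambda_j$ is a complete interpolating sequence for $PW[-a/2,a/2]$ itself, hence in particular a set of interpolation, which is the remaining assertion. The only point that needs a bit of care is the density bookkeeping --- getting $D(\Lambda_j)=b_j$ as an equality, and hence $D\big(\bigcup_{j\in J}\Lambda_j\big)=\sum_{j\in J}b_j$ --- but this follows from either the bounded-perturbation structure of the constructed sets or Landau's necessary density conditions; the rest is the routine sampling-theoretic translation together with the dilation and restriction observations.
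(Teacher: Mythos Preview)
Your proposal is correct and follows precisely the route the paper intends: the theorem appears in Section~\ref{sec:sampling} without explicit proof, immediately after the paper records the Fourier dictionary $PW[-\tfrac a2,\tfrac a2]=\mathcal F^{-1}L^2[-\tfrac a2,\tfrac a2]$, the definitions of stable sampling and interpolation, and Landau's density conditions, so that the result is meant to be read off from Theorem~\ref{thm:main2} (via the rescaling already used for Corollary~4) exactly as you do. Your additional care with the monotonicity step $PW[-\tfrac a2,\tfrac a2]\subseteq PW[-\tfrac{S_J}2,\tfrac{S_J}2]$ for $a\le S_J$ and with the density bookkeeping (either via the bounded-perturbation structure from Theorems~\ref{thm:rounding} and~\ref{thm:rearrangement} or via Landau) fills in details the paper leaves to the reader.
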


We omit restating Theorem~\ref{thm:maincountable} in the sampling theory setting.

\section{Appendix}

\subsection{Proof of Theorem~\ref{thm:BF}}\label{appendix:BF}

  For $N\in\Z$ positive we let  $F(a,N)$ denote the number of elements of $\Big\lfloor{\almin{a}}\Big\rfloor=\Big\lfloor{\al{a}}\Big\rfloor$ that lie in $[0,N)$. The biggest $K$ with $\frac {K-\frac 1 2 } a < N$ is $\lfloor a N + \frac 1 2\rfloor$ and the smallest such $K$ is 1, consequently we have $F(a,N)=\lfloor a N + \frac 1 2\rfloor$. Irrationality of $a$ and $1-a$ implies
    \begin{eqnarray*}
     a N + \frac 1 2-1<&F(a,N)&<a N + \frac 1 2,\\
     (1-a) N + \frac 1 2-1<&F(1-a,N)&<
     (1-a) N +\frac 1 2.
     \end{eqnarray*}
     Addition of both equations gives
      \begin{align*}
    N-1<F(a,N)+F(1-a,N)<N+1
     \end{align*} and, therefore, $F(a,N)+F(1-a,N)=N$.

     Clearly, this implies that exactly one series assumes the value $0$ which is the only integer in $[0,1)$, and moving inductively from the interval $[0,N)$ to $[0,N+1)$ we add exactly one additional integer, namely, $N$, which is assumed by exactly one element  of one the  sequences $\Big\lfloor{\almin{a}}\Big\rfloor$ and $\Big\lfloor{\almin{1-a}}\Big\rfloor$.

Similarly, we note that $\frac {K-\frac 1 2 } a \in [-N,0)$ for $K=\lceil -a N +\frac 1 2 \rceil, \ldots,-1,0$, that is, for $G(a,N)=-\lceil -a N +\frac 1 2 \rceil + 1$ Elements. Irrationality gives, for example,
\begin{align*}
   -a N +\tfrac 1 2 < \lceil -a N +\tfrac 1 2 \rceil<  -a N +\tfrac 1 2 +1
\end{align*}
and
\begin{align*}
   a N -\frac 1 2 > - \lceil -a N +\frac 1 2 \rceil   >  - a N -\frac 1 2 -1.
\end{align*}
Using $G(a,N)=-\lceil -a N +\frac 1 2 \rceil + 1$, we obtain
\begin{eqnarray*}
  a N +\tfrac 1 2 >& G(a,N)&>     a N -\tfrac 1 2, \\
  (1-a) N +\tfrac 1 2 > &G(1-a,N)&>     (1-a) N -\tfrac 1 2
\end{eqnarray*}
which in sum gives
\begin{align*}
   N +1 &> G(a,N)+ G(1-a,N)>      N -1
\end{align*}
and $G(a,N)+ G(1-a,N)=N$ follows.  This implies inductively, as above, that each negative integer is assumed by exactly one element from one of the two sequences $\Big\lfloor{\almin{a}}\Big\rfloor$ and $\Big\lfloor{\almin{1-a}}\Big\rfloor$.

\subsection{Proof of Weyl-Khinchin Equidistribution}

To prove the following generalization of Theorem~\ref{thm:WK} requires no additional arguments.   
\begin{theorem}\label{thm:WKgeneral}
For a piecewise continuous function $f$ on $[0,1]$,   $a$ irrational and $\epsilon>0$ exists
$R_0$  so that for all $R\in\Z$, $R>R_0$, $\alpha \in \R$ and $m\in\Z$,
\begin{align*}
\Big|\frac{1}{R} \sum_{k=mR}^{(m+1)R-1}  f\Big(\frac {k+\alpha} a -
\Big\lfloor\frac {k+\alpha} a \Big\rfloor\Big)
 - \int_0^1 f(x)\, dx\Big| <\epsilon.
\end{align*}
\end{theorem}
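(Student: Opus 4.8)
The plan is to reduce the assertion to a uniform bound on geometric exponential sums, via two standard approximation steps: first sandwich the piecewise continuous $f$ between continuous $1$-periodic functions whose integrals are close, and then approximate those uniformly by trigonometric polynomials. For trigonometric polynomials the sum in question becomes an explicit geometric series, and the irrationality of $a$ makes it small for large $R$ with a bound that does not involve the window index $m$ or the shift $\alpha$ at all.

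\textbf{Step 1: the exponential sums.} For a nonzero integer $n$, since $a$ irrational forces $n/a\notin\Z$ and hence $e^{2\pi i n/a}\neq 1$, summing the geometric series (with $j=k-mR$) gives
\[
\frac1R\sum_{k=mR}^{(m+1)R-1} e^{2\pi i n (k+\alpha)/a}
= \frac{e^{2\pi i n\alpha/a}\,e^{2\pi i nmR/a}}{R}\cdot\frac{e^{2\pi i nR/a}-1}{e^{2\pi i n/a}-1},
\]
so that, using $|e^{i\theta}-1|=2|\sin(\theta/2)|$,
\[
\Big|\frac1R\sum_{k=mR}^{(m+1)R-1} e^{2\pi i n (k+\alpha)/a}\Big|\le \frac{1}{R\,|\sin(\pi n/a)|}.
\]
This bound is independent of $m$ and $\alpha$ and, for each fixed $n\neq 0$, tends to $0$ as $R\to\infty$. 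Hence, for any trigonometric polynomial $T(x)=\sum_{|n|\le N}c_n e^{2\pi i nx}$, using $e^{2\pi i nx}=e^{2\pi i n(x-\lfloor x\rfloor)}$ and that the $n=0$ term contributes exactly $c_0=\int_0^1 T$,
\[
\Big|\frac1R\sum_{k=mR}^{(m+1)R-1} T\Big(\tfrac{k+\alpha}{a}-\big\lfloor\tfrac{k+\alpha}{a}\big\rfloor\Big)-\int_0^1 T(x)\,dx\Big|\le \frac1R\sum_{0<|n|\le N}\frac{|c_n|}{|\sin(\pi n/a)|},
\]
which for fixed $T$ is $<\epsilon'$ once $R$ exceeds a threshold $R_0(T,a,\epsilon')$ that depends on neither $m$ nor $\alpha$.

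\textbf{Step 2: passing to piecewise continuous $f$.} Fix $\epsilon>0$. Being piecewise continuous on $[0,1]$, $f$ is bounded, $|f|\le B$, with finitely many discontinuities; regard $[0,1)$ as $\R/\Z$. Choose a finite union $U$ of open intervals covering the discontinuity points together with $0$, with $|U|$ so small that $2B|U|<\epsilon/4$, and pick continuous $1$-periodic functions $g_-\le f\le g_+$ that equal $-B$, resp.\ $+B$, on $U$ and satisfy $g_+-g_-<\epsilon/4$ off $U$; then $\int_0^1(g_+-g_-)<\epsilon/2$, so $\int_0^1 g_\pm$ each lie within $\epsilon/2$ of $\int_0^1 f$. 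By the Weierstrass approximation theorem on the torus, choose trigonometric polynomials $T_\pm$ with $\|g_\pm-T_\pm\|_\infty<\epsilon/8$, and let $R_0$ be the larger of the two thresholds produced by Step 1 for $T_+$ and $T_-$ with $\epsilon'=\epsilon/8$. Then for $R>R_0$, any $m\in\Z$ and any $\alpha\in\R$, writing $F_R:=\frac1R\sum_{k=mR}^{(m+1)R-1}(\,\cdot\,)\big(\tfrac{k+\alpha}{a}-\lfloor\tfrac{k+\alpha}{a}\rfloor\big)$, the pointwise bound $g_-\le f\le g_+$ gives $F_R(g_-)\le F_R(f)\le F_R(g_+)$, while $|F_R(g_\pm)-\int_0^1 g_\pm|\le \|g_\pm-T_\pm\|_\infty+|F_R(T_\pm)-\int_0^1 T_\pm|+\|T_\pm-g_\pm\|_\infty<3\epsilon/8$. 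Combining, $F_R(f)\le \int_0^1 g_+ +3\epsilon/8\le \int_0^1 f+\epsilon/2+3\epsilon/8<\int_0^1 f+\epsilon$, and symmetrically $F_R(f)>\int_0^1 f-\epsilon$, which is the claim.

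\textbf{Main obstacle.} The one point requiring genuine care is precisely the uniformity over $m$ and $\alpha$, which is what separates this statement from the classical Weyl equidistribution theorem. It is resolved in Step 1 by the observation that $m$ and $\alpha$ enter the geometric sum only through the unimodular factors $e^{2\pi i nmR/a}$ and $e^{2\pi i n\alpha/a}$, and therefore drop out of the estimate entirely; after this, the remaining ingredients (summation of a geometric series, the Weierstrass theorem on $\R/\Z$, and the sandwiching of a bounded Riemann-integrable function between continuous ones) are routine, and the treatment of the finitely many discontinuities and of periodicity at the endpoint costs nothing.
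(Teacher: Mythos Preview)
Your proof is correct and follows essentially the same route as the paper's own argument: reduce to exponentials via a geometric-series computation (observing that $m$ and $\alpha$ enter only through unimodular phase factors, which gives the required uniformity), pass to trigonometric polynomials by linearity, invoke Weierstrass on the torus for continuous functions, and handle the piecewise continuous case by sandwiching between continuous $g_-\le f\le g_+$ with close integrals. Your epsilon bookkeeping is somewhat more explicit than the paper's, and you highlight the uniformity-in-$(m,\alpha)$ issue more prominently, but the underlying ideas and their sequencing are the same.
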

\begin{proof}
Let us first consider the function $f(x) = e^{2\pi i\ell x}$ for some $\ell\in\Z$, $\ell\ne 0$. It holds
\begin{align*}
\lim_{R\to\infty}
 \frac{1}{R}\sum_{k=0}^{R-1} e^{2\pi i \ell \big(\frac {k+\alpha} a -
\big\lfloor\frac {k+\alpha} a \big\rfloor\big)}
  &=e^{2\pi i  \frac {\alpha \ell } a }
\lim_{R\to\infty}
 \frac{1}{R}\sum_{k=0}^{R-1} e^{2\pi i  \frac {k\ell} a }
=e^{2\pi i  \frac {\alpha \ell } a } \lim_{R\to\infty}
\frac{ 1-
	e^{2\pi i \frac{R k\ell}{a}}}{ R(1-e^{2\pi i \frac {k\ell } a})}
  =    0 = \int_0^1 e^{2\pi i \ell x}\, dx
  .
\end{align*}
Choosing $R_0$ so that if $R\ge R_0$,
$$\Big|\frac{1}{R}\sum_{n=0}^{R-1} e^{2\pi i  \frac  {k\ell}a }\Big| < \epsilon$$
we note that
\begin{align*}
 \frac{1}{R}  \sum_{k=mR}^{(m+1)R-1} e^{2\pi i  \frac  {k\ell}a}
 =e^{2\pi i  \frac  {mR\ell}a}\frac{1}{R}\sum_{k=0}^{ R-1} e^{2\pi i  \frac  {k\ell}a}
\end{align*}
 implies that
\begin{align*}
\Big|
 \frac{1}{R}\sum_{k=mR}^{(m+1)R-1} e^{2\pi i \ell \big(\frac {k+\alpha} a -
\big\lfloor\frac {k+\alpha} a \big\rfloor\big)} \Big|<\epsilon
\end{align*}
for all $m\in\N$ and all $R>R_0$, irrespectively the choice of $\alpha$, so the result is shown for all exponential functions. (The case $\ell=0$ holds trivially.) Using the triangle inequality the result extens to trigonometric polynomials.

The Stone-Weierstrass Theorem implies that for a an arbitrary continuous function $f$ on the torus, there exists for given $\epsilon>0$ a trigonometric polynomial $P$ with $|f(x)-P(x)|<\frac \epsilon 3$ for all $x\in [0,1]$. It follows
with $R_0$ chosen sufficiently large, that
$R>R_0$,
\begin{align*}
\Big|\frac{1}{R} &\sum_{k=mR}^{(m+1)R-1}  f\Big(\frac {k+\alpha} a -
\Big\lfloor\frac {k+\alpha} a \Big\rfloor\Big)
 - \int_0^1 f(x)\, dx\Big|
 \leq
 \Big|\frac{1}{R} \sum_{k=mR}^{(m+1)R-1}  \Big(f\Big(\frac {k+\alpha} a -
\Big\lfloor\frac {k+\alpha} a \Big\rfloor\Big)-P\Big(\frac {k+\alpha} a -
\Big\lfloor\frac {k+\alpha} a \Big\rfloor\Big)\Big)\Big|\\
&+
\Big|\frac{1}{R} \sum_{k=mR}^{(m+1)R-1}  P\Big(\frac {k+\alpha} a -
\Big\lfloor\frac {k+\alpha} a \Big\rfloor\Big)
 - \int_0^1 P(x)\, dx\Big|+
  \int_0^1 P(x)-f(x)\, dx\Big| <3 \frac \epsilon 3=\epsilon.
\end{align*}

If $f$ is piecewise continuous we can pick continuous functions $h$ and $g$ with
\begin{align*}
-M\leq g(x)\leq f(x)\leq h(x)\leq M
\end{align*}
and $g(x)=f(x)=h(x)$ outside a (possibly infinite) union of intervals $I$ of maximum length $\frac \epsilon M$. With $R_0$  sufficiently large, we have, for example
$R>R_0$,
\begin{align*}
 \frac{1}{R} &\sum_{k=mR}^{(m+1)R-1}  f\Big(\frac {k+\alpha} a -
\Big\lfloor\frac {k+\alpha} a \Big\rfloor\Big)\leq \frac{1}{R} \sum_{k=mR}^{(m+1)R-1}  h\Big(\frac {k+\alpha} a -
\Big\lfloor\frac {k+\alpha} a \Big\rfloor\Big)\leq \int h + \frac \epsilon 3 \\ &
=\int_{I^c} f + \int_I h -\int_I f+\int_I f + \frac \epsilon 3 \leq \int f +2M\frac \epsilon {3M} +\frac \epsilon 3=\int f+\epsilon.
\end{align*}

\end{proof}

\subsection{Proof of Theorem~\ref{thm:shiftedlattice1}}

\begin{proof}
Part (a) follows immediately from Theorem~\ref{thm:translateanddilate}.

For part (b), let $J\subseteq\{1,\,2,\,\dots,\,N\} = \{k_j\}_{j=1}^{|J|}$ and assume without loss of generality that $I=[0,|J|]$.
First we will show that $\displaystyle{\E(\cup_{j=1}^{|J|}\Gamma_{k_j})}$ satisfies (\ref{eqn:RB}).  To that end, 
let $\{c^j_n\colon j\in J,\,n\in\Z\}$ be a finite sequence and consider the sum
$$\sum_{j=1}^{|J|}\,\sum_{n\in\Z} c^j_n\,e^{2\pi i(Nn+k_j)t} = \sum_{j=1}^{|J|} e^{2\pi i k_j t}\,\sum_{n\in\Z} c^j_n\,e^{2\pi inN t}
  =  \sum_{j=1}^{|J|} e^{2\pi i k_j t}\,f_j(t),$$
  and note that each $f_j(t)$ has period $1/N$.
  
Now,
\begin{align*}
\bigg\|\sum_{j=1}^{|J|}e^{2\pi i k_j t}\,f_j(t)\bigg\|^2
=& \int_I\bigg|\sum_{j=1}^{|J|}e^{2\pi i k_j t}\,f_j(t)\bigg|^2\,dt \\
=& \sum_{m=0}^{|J|-1}\int_0^{1/N}\bigg|\sum_{j=1}^{|J|}e^{2\pi i k_j (t+m/N)}\,f_j(t+m/N)\bigg|^2\,dt \\
=& \sum_{m=0}^{|J|-1}\int_0^{1/N}\bigg|\sum_{j=1}^{|J|}e^{2\pi i k_j m/N}\,e^{2\pi i k_j t}\,f_j(t)\bigg|^2\,dt.
\end{align*}
The $|J|\times|J|$ Vandermonde matrix $V$ given by $V = (v_{j,m})= (e^{2\pi i m k_j/N})$, $j=1,\dots,|J|,\,m=0,\dots,|J|-1$
is invertible and therefore there exist constants, $A,\,B>0$ such that for all $\x\in\C^{|J|}$,
$A\|\x\|^2_2 \le \|V\x\|^2_2 \le B\,\|\x\|^2_2$, and $A\|\x\|^2_2 \le \|V^*\x\|^2_2 \le B\,\|\x\|^2_2$
that is, 
\begin{equation}\label{eqn:vandermonde}
A\,\sum_{j=1}^{|J|} |x_j|^2 \le \sum_{m=0}^{|J|-1}\bigg|\sum_{j=1}^{|J|} v_{j,m}\,x_j\bigg|^2 \le B\,\sum_{j=1}^{|J|} |x_j|^2,\ \mbox{and}\ 
A\,\sum_{m=0}^{|J|-1} |x_m|^2\le \sum_{j=1}^{|J|}\bigg|\sum_{m=0}^{|J|-1} v_{j,m}\,x_m\bigg|^2 \le B\,\sum_{m=0}^{|J|-1} |x_m|^2.
\end{equation}
Therefore,
$$A\,\int_0^{1/N} \sum_{j=1}^{|J|}|e^{2\pi i k_j t}\,f_j(t)|^2\,dt \le
\sum_{m=0}^{|J|-1}\int_0^{1/N} \bigg|\sum_{j=1}^{|J|} e^{2\pi i k_j m/N}\,e^{2\pi i k_j t}\,f_j(t)\bigg|^2\,dt 
\le B\,\int_0^{1/N} \sum_{j=1}^{|J|}|e^{2\pi i k_j t}\,f_j(t)|^2\,dt$$
so that
$$A\,\int_0^{1/N} \sum_{j=1}^{|J|}\bigg|\sum_{n\in\Z} c^j_n\,e^{2\pi i(Nn+k_j)t}\bigg|^2\,dt \le
\bigg\|\sum_{j=1}^{|J|}e^{2\pi i k_j t}\,f_j(t)\bigg\|^2
\le  B\,\int_0^{1/N} \sum_{j=1}^{|J|}\bigg|\sum_{n\in\Z} c^j_n\,e^{2\pi i(Nn+k_j)t}\bigg|^2\,dt,$$
which implies by the orthogonality of $\{e^{2\pi iNn t}\colon n\in\Z\}$ in $L^2[0,1/N]$ that
$$\frac{A}{N}\,\sum_{j=1}^{|J|}\sum_{n\in\Z} |c^j_n|^2 \le 
\bigg\|\sum_{j=1}^{|J|}\,\sum_{n\in\Z} c^j_n\,e^{2\pi i(Nn+k_j)t}\bigg\|^2 \le\frac{B}{N}\,\sum_{j=1}^{|J|}\sum_{n\in\Z} |c^j_n|^2$$
which is (\ref{eqn:RB}).

It remains to show that $\displaystyle{\E(\cup_{j\in J}\Gamma_{k_j})}$ is complete in $L^2(I)$ which can be accomplished
with a calculation similar to the above, viz.,
\begin{align*}
\sum_{j=1}^{|J|}\sum_{n\in\Z} |\langle f, e^{2\pi i (Nn+k_j)}\rangle|^2
=& \sum_{j=1}^{|J|}\sum_{n\in\Z}\bigg|\int_I f(t)\,e^{-2\pi iNn t}\,e^{-2\pi ik_j t}\,dt\bigg|^2 \\
=& \sum_{j=1}^{|J|}\sum_{n\in\Z}\bigg|\sum_{m=0}^{|J|-1}\int_0^{1/N} f(t+m/N)\,e^{-2\pi iNn (t+m/N)}\,e^{-2\pi ik_j (t+m/N)}\,dt\bigg|^2 \\
=& \sum_{j=1}^{|J|}\sum_{n\in\Z}\bigg|\int_0^{1/N}e^{-2\pi ik_j t} \bigg[\sum_{m=0}^{|J|-1}f(t+m/N)\,e^{-2\pi ik_j m/N}\bigg]\,e^{-2\pi iNn t}\,dt\bigg|^2 \\
=& \frac{1}{N}\sum_{j=1}^{|J|}\int_0^{1/N}\bigg|\sum_{m=0}^{|J|-1}f(t+m/N)\,e^{-2\pi ik_j m/N}\bigg|^2\,dt
\end{align*}
by Parseval's formula.  Applying (\ref{eqn:vandermonde}) gives
\begin{align*}
  \frac{A}{N}\int_I |f(t)|^2\,dt &= \frac{A}{N}\int_0^{1/N}\sum_{m=0}^{|J|-1}|f(t+m/N)|^2\,dt \\
  &\le \frac{1}{N}\sum_{j=1}^{|J|}\int_0^{1/N}\bigg|\sum_{m=0}^{|J|-1} f(t+m/N)\,e^{-2\pi ik_j m/N}\bigg|^2\,dt 
  = \sum_{j=1}^{|J|}\sum_{n\in\Z} |\langle f, e^{2\pi i (Nn+k_j)}\rangle|^2 \\ 
  &\le \frac{B}{N}\sum_{m=0}^{|J|-1}\int_0^{1/N}|f(t+m/N)|^2\,dt = \frac{B}{N}\int_I |f(t)|^2\,dt.
\end{align*}
This shows that $\displaystyle{\E(\cup_{j=1}^{|J|}\Gamma_{k_j})}$ is a frame in $L^2(I)$ and hence complete.
\end{proof}

\bibliographystyle{plain}

\end{document}